\newcommand\reallywidehat[1]{%
\savestack{\tmpbox}{\stretchto{%
  \scaleto{%
    \scalerel*[\widthof{\ensuremath{#1}}]{\kern-.6pt\bigwedge\kern-.6pt}%
    {\rule[-\textheight/2]{1ex}{\textheight}}
  }{\textheight}%
}{0.5ex}}%
\stackon[1pt]{#1}{\tmpbox}%
}
\newcommand\reallywidecheck[1]{%
\savestack{\tmpbox}{\stretchto{%
  \scaleto{%
    \scalerel*[\widthof{\ensuremath{#1}}]{\kern-.6pt\bigwedge\kern-.6pt}%
    {\rule[-\textheight/2]{1ex}{\textheight}}
  }{\textheight}%
}{0.5ex}}%
\stackon[1pt]{#1}{\scalebox{-1}{\tmpbox}}%
}
\newcommand\reallywidetilde[1]{%
\savestack{\tmpbox}{\stretchto{%
  \scaleto{%
    \scalerel*[\widthof{\ensuremath{#1}}]{\kern-.6pt\bigtilde\kern-.6pt}%
    {\rule[-\textheight/2]{1ex}{\textheight}}
  }{\textheight}%
}{0.5ex}}%
\stackon[1pt]{#1}{\scalebox{-1}{\tmpbox}}%
}
\numberwithin{equation}{section}
\newcommand{\supp}{\mbox{\rm supp}}
\newcommand{\dens}{\mbox{\rm dens}}
\newcommand{\udens}{\overline{\mbox{\rm dens}}}
\newcommand{\ldens}{\underline{\mbox{\rm dens}}}
\newcommand{\card}{\mbox{\rm card}}
\newcommand{\vol}{\mbox{\rm vol}}
\newcommand{\RR}{{\mathbb R}}
\newcommand{\ZZ}{{\mathbb Z}}
\newcommand{\CC}{{\mathbb C}}
\newcommand{\NN}{\mathbb N}
\newcommand{\XX}{\mathbb X}
\newcommand{\PP}{\mathbb P}
\newcommand{\AAA}{\mathbb A}
\newcommand{\cM}{{\mathcal M}}
\newcommand{\cA}{{\mathcal A}}
\newcommand{\cB}{{\mathcal B}}
\newcommand{\dd}{\mbox{d}}
\newcommand{\Cc}{C_{\mathsf{c}}}
 \newtheorem{theorem}{Theorem}[section]
 \newtheorem{lemma}[theorem]{Lemma}
 \newtheorem{proposition}[theorem]{Proposition}
 \newtheorem{corollary}[theorem]{Corollary}
 \newtheorem{definition}[theorem]{Definition}
 \newtheorem{example}[theorem]{Example}
  \newtheorem{remark}[theorem]{Remark}
\begin{document}
\title{Diffraction of the primes and other sets of zero density}

\author[A. Humeniuk]{Adam Humeniuk}
\address{Department of Mathematics and Computing, Mount Royal University \newline \hspace*{\parindent}  Calgary, Alberta, Canada}
\email{ahumeniuk@mtroyal.ca}

\author[C. Ramsey]{Christopher Ramsey}
\address{Department of Mathematics and Statistics, MacEwan University, \newline
\hspace*{\parindent}  Edmonton, Alberta, Canada}
\email{ramseyc5@macewan.ca}
\urladdr{https://sites.google.com/macewan.ca/chrisramsey/}

\author[N. Strungaru]{Nicolae Strungaru}
\address{Department of Mathematics and Statistics, MacEwan University, \newline
\hspace*{\parindent}  Edmonton, Alberta, Canada, 
and 
\newline \hspace*{\parindent} 
Institute of Mathematics ``Simon Stoilow'', 
Bucharest, Romania}
\email{strungarun@macewan.ca}
\urladdr{https://sites.google.com/macewan.ca/nicolae-strungaru/home}



\begin{abstract}
In this paper, we show that the diffraction of the primes is absolutely continuous, showing no bright spots (Bragg peaks). We introduce the notion of counting diffraction, extending the classical notion of (density) diffraction to sets of density zero. We develop the counting diffraction theory and give many examples of sets of zero density of all possible spectral types.
\end{abstract}

\maketitle

\section{Introduction}

In 2018, much interest was given to a group of papers \cite{ZMT, TZC1, TZC2} that studied the prime numbers through diffraction experiments. They discovered that the primes in certain large intervals possess a pattern of Bragg-like peaks that is evocative of the diffraction pattern one sees in quasicrystals, aperiodic solids with diffraction that is crystal-like. The main conjecture of these papers was that this discovered pattern showed deep structural results about the prime numbers and that these approximate diffraction pictures suggest that the whole set of primes have a pure point diffraction spectrum.

In this paper, we establish in Theorem \ref{thm:main1} that the diffraction measure of the prime numbers is the Lebesgue measure, therefore, absolutely continuous. This means that any perceived Bragg-like peak structure observed in a physical diffraction experiment of a finite portion of the primes is an artifact of the experiment's finiteness and disappears when taking the limit. As always, the effectiveness of an approximation comes down to what form of convergence is being used and how fast that convergence happens. We argue here that the correct setting is the convergence of measures in the vague topology, which is the foundation of mathematical diffraction theory, which was developed with its feet firmly planted in physics \cite{Cowley, Hof1, Dworkin, BL}. It is of note that the diffraction of the finite approximations converge extremely slowly for many highly ordered structures, and are nearly impossible to pick up convincingly in a physical experiment, notably the circular symmetry of the pinwheel tiling (see for example \cite[Section 4]{GD}). As we discuss in Remark~\ref{rem:diff-primes-conv}, this also seems to be the case for the diffraction of the primes, which explains why the simulations in \cite{ZMT, TZC1, TZC2} do not show the real picture for the infinite set of primes.

The diffraction of the mathematical idealization of quasicrystals and other nice structures has been studied so far by using the structure's uniform, positive density \cite{Hof1,BL}. However, the prime numbers have zero density in the positive real line so a new approach is needed. Here, we define the counting diffraction, which is studied through its counting version of the autocorrelation (or two-point correlation) measure. The great advantage to this is that the counting and usual diffraction measures correspond on positive density sets, Theorem~\ref{thm:rel-densn0}, while on sets of zero density, the former measure gives a sensible extension of the Patterson formula to an infinite set, Theorem~\ref{thm:rel-dens0}.

Given a finite set $F \subseteq \RR^d$, its diffraction is given by the Patterson formula:
\[
I_F(y):=\frac{1}{\card(F)} \left| \sum_{x \in F} e^{2 \pi i x \cdot y} \right|^2 \,.
\]
A simple computation shows that $I_F$ is the Fourier transform of the finite measure
\[
\gamma^{}_{F}:= \frac{1}{\card(F)} \delta_{F}\ast\widetilde{\delta_{F}} =\frac{1}{\card(F)} \sum_{x,y \in F} \delta_{x-y} \,.
\]
The measure $\gamma^{}_{F}$ is called the autocorrelation, or 2-point correlation, measure of the finite sample $F$.

Next, consider an infinite uniformly discrete set $\varLambda \subseteq \RR^d$ and some nice averaging sequence $(A_n)_n$ in $\RR^d$, such as $A_n=[-n,n]^d$. The diffraction of $\varLambda$ is defined as the limit in a suitable topology of the diffraction measures $I_{F_n}$ of the finite sets $F_n:= \varLambda \cap A_n$. For uniformly discrete sets $\varLambda$ of positive density, it is more advantageous to define the autocorrelation and diffraction of $\varLambda$ by dividing by the volume, $\vol(A_n)$, of the averaging sequence instead of the cardinality, $\card(F_n)$. 
As introduced by Hof \cite{Hof1} in $\RR^d$, the sequence 
\[
\gamma_n := \frac{1}{\vol(A_n)} \delta_{F_n}\ast\widetilde{\delta_{F_n}}  
\]
of autocorrelation measures of the finite samples $F_n$ has a subsequence converging to some positive definite measure $\gamma$. The measure $\gamma$ is Fourier transformable, and, by \cite[Lemma 4.11.10]{MoSt} or \cite[Theorem~4.5]{BF}, its Fourier transform $\widehat{\gamma}$ is a positive measure called the (density) diffraction of $\varLambda$. 

The choice of averaging by $\vol(A_n)$ has the great advantage that one can often use spectral theory of dynamical systems via the so called Dworkin argument (see \cite{Dworkin,BL,Gou}, just to name a few). The relationship 
\[
\gamma_n= \frac{\card(F_n)}{\vol(A_n)} \gamma^{}_{F^{}_n} 
\]
shows that this change from $\gamma^{}_{F^{}_n}$ to $\gamma_n$ simply multiplies the diffraction measure by the positive density
\[
d= \lim_{n \to \infty} \frac{\card(F_n)}{\vol(A_n)}
\]
of the point set. In particular, for sets of positive density, the two approaches should be equivalent, and they are indeed equivalent (see Theorem~\ref{thm:rel-densn0}). On the other hand, the same relation shows that, for sets of density zero, defining the autocorrelation as the limit of $\gamma_n$ is wrong, as this would always give a zero diffraction measure. 

Over the last few years, there has grown interest in the field of aperiodic order towards the study of point sets that are not relatively dense, such as for example maximal density weak model sets (see \cite{BHS,KR15,KR19}). So far, they have been studied using certain averaging sequences with respect to which they have positive density. It is worth emphasizing that a point set $\varLambda$ is not relatively dense if and only if there exists a van Hove sequence with respect to which the set has a density of zero\footnote{In fact, this is equivalent to the seemingly stronger statement that for each van Hove sequence $(A_n)_n$ there exists a sequence $(t_n)_n$ in $G$ such that $\varLambda$ has zero density with respect to the van Hove sequence $(t_n+A_n)_n$. }. These examples, as well as the diffraction of primes we mention above, suggest that it would be good to properly define the diffraction for sets of density zero, which we do in this paper.


The paper is organized as follows: in Section~\ref{sec:preliminaries} we collect some basic definitions and properties needed in the paper, and define the density and counting autocorrelations and diffractions. We study the existence and basic properties of the autocorrelation/diffraction measures, as well as their relationship in Section~\ref{sect:autrel}. In particular, we show that for uniformly discrete point sets, both the density and counting autocorrelations exist along subsequences of a given van Hove sequence (Proposition~\ref{prop:diffraction_exists}). Furthermore, for sets of positive density, the counting and density diffraction measures are proportional (Theorem~\ref{thm:rel-densn0}), while for sets of density zero, the density diffraction is always null while the counting diffraction is non-zero for non-trivial point sets. In Section~\ref{sect:boring} we show that for sets which are very sparse, such as the Fibonacci numbers or any fast-growing sequence, the diffraction is the Lebesgue measure. In Section~\ref{sect:primes} we prove in Theorem~\ref{thm:main1} that the diffraction of the primes is the Lebesgue measure. This is one of the main results of the paper, and shows that the primes have absolutely continuous diffraction spectrum. We show that this holds when the averaging sequence is any reasonable van Hove sequence of intervals. We also study the diffraction of some related sets, such as the diffraction of prime powers and the diffraction of twin primes. We complete the paper by showing in Section~\ref{Sect:last} that there exist sets of density zero with  diffraction of any spectral type and showing in Section~\ref{Sect:lastlast} that the counting diffraction is preserved when a point set is embedded into higher-dimensional space.

\section{Preliminaries}\label{sec:preliminaries}

We begin with recalling a number of definitions. It should be noted that everything before the definition of counting autocorrelation and counting diffraction for infinite sets is standard.

Throughout this paper, $G$ is a second countable locally compact abelian group (LCAG), and any such group is metrisable. While in most examples $G$ will simply be $\RR$, we want to set up the theory of counting diffraction in general settings.  


\begin{definition}
A set $\varLambda \subseteq G$ is called \textbf{uniformly discrete} if for any choice of metric $d$ that generates the topology on $G$, there exists a constant $r>0$ such that for all  $x,y\in \varLambda$, if $x\ne y$, then $d(x,y)\ge r$.

The set $\varLambda \subseteq G$ is called \textbf{locally finite} if, for all compact sets $K \subseteq G$, the set $\varLambda \cap K$ is finite.  
\end{definition}

It is easy to see that $\varLambda \subseteq G$ is uniformly discrete if and only if there exists some non-empty open set $U$ such that, any translate $t+U$ of $U$ meets $\varLambda$ in at most one point. 
On another hand, $\varLambda$ is locally finite exactly when $\varLambda$ is closed and discrete in $G$.

Finally, a set $\varLambda$ is locally finite exactly when the Dirac comb
\[
\delta_\varLambda :=
\sum_{x\in \varLambda} \delta_x
\]
is a locally finite measure on $G$.

Next, let us recall the definition of FLC.

\begin{definition}
A point set $\varLambda$ has \textbf{finite local complexity} (FLC) if $\varLambda-\varLambda$ is locally finite.
\end{definition}

\smallskip
{
For a finite measure $\mu$ on $G$, 
define the \textbf{autocorrelation measure} of $\mu$ as
\[
\gamma_{\mu}:= 
\left\{
\begin{array}{cc}
\frac{1}{\left|\mu\right|
(G)} \mu \ast \widetilde{\mu}, & \mbox{ if } \mu \neq 0\,, \\
0, & \mbox{ if } \mu =0 \,,
\end{array}
\right. 
\]
where $\ast$ denotes convolution of finite measures, and $\widetilde{\mu}$ is the reflected conjugate measure 
\[
\widetilde{\mu}(\varphi) :=
\overline{\mu(\overline{\varphi})} \qquad \forall \varphi \in \Cc(G) \,.
\]
In particular, if $F \subseteq G$ is a finite set, its autocorrelation is simply the autocorrelation $\gamma^{}_{F}$ of $\delta_{F}$:
\[
\gamma^{}_{F}:=
\left\{
\begin{array}{cc}
\frac{1}{\card(F)} \delta_F\ast \widetilde{\delta_F}, & \mbox{ if } F \neq \emptyset\,, \\
0, & \mbox{ if } F = \emptyset \,.
\end{array}
\right.
\]
Note that when $F \neq \emptyset$ we have 
\[
\gamma^{}_{F} =\frac{1}{\card(F)} \sum_{x,y \in F} \delta_{x-y} \,.
\]
}

Recall that the \textbf{Fourier transform} of a finite Radon measure $\mu$ on a LCAG $G$ is the extended complex function $\widehat{\mu}$ on the dual group $\widehat{G}$ defined by
\[
\widehat{\mu}(\chi) =
\int_G \overline{\chi(\mu)} \; \dd \mu( \chi)
\]
for all $\chi \in \widehat{G}$.
{
\begin{definition}
The \textbf{counting diffraction} of a finite measure $\mu$ is the Fourier transform
\[
\widehat{\gamma_{\mu}}(\chi) =
\left\{
\begin{array}{cc}
\frac{1}{\left| \mu \right|(G)} \widehat{\mu \ast \widetilde{\mu}} (\chi)= \frac{1}{\left| \mu \right|(G)} \left| \widehat{\mu}(\chi) \right|^2  & \mbox{ if } \mu \neq 0  \,, \\
0 & \mbox{ if } \mu=0 \, .
\end{array}
\right.
\]

Similarly, the \textbf{counting diffraction} of a finite set $F\subseteq G$ is the Fourier transform
\[
\widehat{\gamma^{}_{F}}(\chi) =
\left\{
\begin{array}{cc}
\frac{1}{\card(F)} \widehat{\delta_F\ast \widetilde{\delta_F}} (\chi)= \frac{1}{\card(F)} \left| \sum_{x \in F} \chi(x) \right|^2 & \mbox{ if } F \neq \emptyset  \,, \\
0 & \mbox{ if } F= \emptyset \, .
\end{array}
\right.
\]
\end{definition}
}
If $\varLambda$ is not finite, defining the autocorrelation $\gamma_\varLambda$, and therefore the diffraction of $\varLambda$, is more difficult as the convolution $\delta_\varLambda\ast \widetilde{\:\delta_\varLambda\:}$ is ill-defined. If $\varLambda$ is locally finite, then we can avoid this issue by restricting to a sequence $(F_n)_n$ of finite sets $F_n=A_n\cap \varLambda$, where $\cA=(A_n)_n$ is an appropriate averaging sequence of compact sets in $G$. As it was noted by Schlottmann \cite{Sch00}, when dealing with point sets and measures, one has to work with so-called van Hove sequences. Let us recall here the definition.

\begin{definition}
A \textbf{van Hove sequence} $\cA=(A_n)_n$ in a second countable LCAG $G$ is a sequence of compact sets of positive measure such that for every compact subset $K\subseteq G$,
\[
\lim_{n\to \infty} \frac{\vol(\partial^K(A_n))}{\vol(A_n)} = 0,
\]
where ``$\,\vol$" denotes Haar measure, and
\[
\partial^K(A)=((A+K)\setminus A^\circ)\cup((\overline{G\setminus A}-K)\cap A)
\]
is the \textbf{$K$-boundary} of a set $A$.
\end{definition}

The $K$ boundary $\partial^K(A)$ is the set of points which are $K$-translates of points in $A$ that land outside $A^\circ$, or which are points in $A$ which are translated outside of $A^\circ$ by a point in $K$. If $K=\{g\}$ is a singleton, the symmetric difference $(g+A)\triangle A$ satisfies
\[
(g+A)\triangle A \subseteq \partial^{\{g,-g\}}(A),
\]
Therefore if $A_n$ is a van Hove sequence, and $g$ is in $G$, then
\[
\lim_{n\to \infty} \frac{\vol((g+A_n)\triangle A_n)}{\vol(A_n)} = 0.
\]
That is, every van Hove sequence is a F\o lner sequence.

\begin{remark}\
\begin{itemize}
\item[(a)] Every second countable LCAG admits van Hove sequences \cite{Sch00}. 
    \item[(b)]
Some authors define van Hove sequences to be precompact. While this seems to allow more general averaging sequences, it typically makes no difference to calculations. Indeed, it is easy to see that a sequence $(A_n)_n$ of precompact sets is a van Hove sequence if and only if $(\overline{A_n})_n$ is a van Hove sequence. Moreover, \cite[Lemma~1.1]{Sch00} implies that for every translation bounded measure $\mu$ (as defined below) we have 
\[
\lim_{n \to \infty} \left| \frac{1}{\vol(A_n)} \mu(A_n) -\frac{1}{\vol(\overline{A_n})} \mu(\overline{A_n}) \right| =0 \,.
\]
Because of this, restricting to compact van Hove sequences is not a restriction.
\item[(c)] All the examples below are point sets $\varLambda \subseteq \RR$. For those examples, we will always use van Hove sequences $([a_n,b_n])_n$ of intervals. It is easy to see that 
a sequence $([a_n,b_n])_n$ of intervals is a van Hove sequence if and only if $\displaystyle \lim_{n \to \infty} b_n-a_n =\infty$.

\end{itemize}

\end{remark}

We will sometimes prefer to work with van Hove sequences which are \textbf{nested} in the sense that $A_n\subseteq A_{n+1}$ for every $n$. We will also say that $(A_n)_n$ is \textbf{exhausting} if $\bigcup_n A_n = G$. The most typical exhausting van Hove sequences on $\RR^d$ are the cubes $([-n,n]^d)_n$ or balls $(\{x\in \RR^d\;\mid\; \|x\|\le n\})_n$.

\medskip


When proving the existence of the autocorrelation, we will need the following measure theoretic background. 

By the Riesz Representation Theorem \cite{Die,Rud}, a \textbf{Radon measure} (or simply measure) on $G$ can be seen as a linear functional $\mu$ on the space $\Cc(G)$ of compactly supported continuous functions on $G$, which is continuous with respect to the so-called ``inductive" topology on $\Cc(G)$, obtained by writing 
\[
\Cc(G) = \bigcup_{\substack{ K \subseteq G \\ K \mbox{ compact }} } C(G:K) \,,
\]
where
\[
C(G:K):= \{f\in \Cc(G)\;\mid \; \supp(f)\subseteq K\} \,.
\]
The \textbf{vague topology} for measure is defined as the weak-$\ast$ topology on the space $\cM(G)$ of measures on $G$ viewed as the dual space of $\Cc(G)$. More precisely, a net $\mu_\alpha$ of measures converges vaguely to a measure $\mu$ precisely when 
\[
\mu(f) = \lim_\alpha \mu_\alpha(f) \qquad \text{for all } f \in \Cc(G) \,.
\]

\smallskip 

Give a Radon measure $\mu$ on $G$ and a precompact Borel set $B$ we can define 
\[
\|\mu\|_B :=
\sup_{t\in G} |\mu|(t+B) \,,
\]
where $|\mu|$ denotes the total variation measure of $\mu$ (see \cite[Sect.~6.5.5]{Ped} for the definition and properties).

The set of \textbf{translation bounded measures} is defined as
\[
\cM^\infty(G)=\{\mu \in \cM(G)\;\mid \; \|\mu\|_K<\infty \text{ for all compact }K\subseteq G\} \,.
\]
In fact, if $U$ and $V$ are any precompact sets with nonempty interior, then $\|\cdot\|_U$ and $\|\cdot\|_V$ define norms (not just seminorms) on $\cM^\infty(G)$, and these norms are equivalent \cite{BM}.

Let us note here that for any uniformly discrete point set $\varLambda$ we have $\delta_{\varLambda} \in \cM^\infty(G)$ \cite{BL}, which we use below. 

\smallskip 
For any constant $C>0$ and open precompact set $U\subseteq G$, write
\[
\cM_{C,U}(G) :=
\{\mu\in\cM^\infty(G) \;\mid\;
\|\mu\|_U\le C\}.
\]
Since the vague topology is a weak-$\ast$ topology, the Banach-Alaoglu Theorem implies compactness of $\cM_{C,U}(G)$.

\begin{lemma}\cite[Theorem 2]{BL} \label{lem:M_C_U_compact} Let $G$ be a LCAG. Then, the set $\cM_{C,U}(G)$ is vaguely compact. If $G$ is second countable, then the vague topology is metrisable on  $\cM_{C,U}(G)$. \qed 
\end{lemma}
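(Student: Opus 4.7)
The plan is to realise $\cM_{C,U}(G)$ as a closed subset of a Tychonoff product of compact discs and then, under second countability, to extract a countable family of evaluations whose agreement metrises the vague topology. The key initial observation is that the translation bound $\|\mu\|_U \le C$ propagates to a uniform bound on every compact set $K\subseteq G$: cover $K$ by finitely many translates $t_1+U,\dots,t_{n_K}+U$, so that
\[
|\mu(f)| \le \|f\|_\infty \,|\mu|(K) \le n_K C \|f\|_\infty
\]
for every $\mu \in \cM_{C,U}(G)$ and every $f \in C(G:K)$.

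With this bound in hand, for each $f\in \Cc(G)$ put $D_f := \{z\in \CC : |z| \le n_{\supp(f)} C \|f\|_\infty\}$, form the compact product $\Pi := \prod_{f\in \Cc(G)} D_f$, and define $\Phi\colon \cM_{C,U}(G) \to \Pi$ by $\Phi(\mu) = (\mu(f))_{f}$. The map $\Phi$ is injective, since a Radon measure is determined by its action on $\Cc(G)$, and the product topology on $\Pi$ pulls back to the vague topology by construction. What remains for compactness is to verify that $\Phi(\cM_{C,U}(G))$ is closed: if a net $\Phi(\mu_\alpha)$ converges in $\Pi$ to $(z_f)_f$, then the functional $\mu(f) := z_f$ is linear on $\Cc(G)$, inherits the bound $|\mu(f)| \le n_K C \|f\|_\infty$ on each $C(G:K)$, hence is continuous in the inductive-limit topology and thus is a Radon measure by the Riesz representation recalled in the preliminaries. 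Finally, for any $t \in G$ and any $f \in \Cc(t+U)$ with $\|f\|_\infty \le 1$ one has $|\mu(f)| = \lim_\alpha |\mu_\alpha(f)| \le \|\mu_\alpha\|_U \le C$; taking the supremum over such $f$, together with the variational description of $|\mu|$ on the open set $t+U$, yields $|\mu|(t+U) \le C$, and a further supremum over $t$ gives $\mu \in \cM_{C,U}(G)$.

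For metrisability in the second countable case, $G$ is $\sigma$-compact, so fix an exhaustion $G = \bigcup_n K_n$ by compact sets with $K_n \subseteq K_{n+1}^\circ$. Since $G$ is metrisable, each $K_n$ is a compact metric space, so the Banach space $C(K_n)$ is separable, and the isometric inclusion $C(G:K_n) \hookrightarrow C(K_n)$ (by restriction, using that elements of $C(G:K_n)$ vanish outside $K_n$) passes separability to $C(G:K_n)$. Pick a countable sup-norm dense set $\{f_{n,k}\}_{k\ge 1} \subseteq C(G:K_n)$. An $\eps/3$ argument using the uniform bound above shows that on $\cM_{C,U}(G)$ the vague convergence $\mu_j \to \mu$ is equivalent to $\mu_j(f_{n,k}) \to \mu(f_{n,k})$ for every $n,k$, so
\[
d(\mu,\nu) := \sum_{n,k} 2^{-n-k}\min\bigl\{1,\,|\mu(f_{n,k})-\nu(f_{n,k})|\bigr\}
\]
is a compatible metric.

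The main obstacle is transferring the bound $\|\mu\|_U \le C$ past the vague limit, since total variation is not vaguely continuous in general. The resolution used above is the well-known lower semi-continuity of $\mu \mapsto |\mu|(V)$ on open sets $V$, applied in its Riesz-type variational form as the supremum of $|\mu(f)|$ over $f\in \Cc(V)$ with $\|f\|_\infty \le 1$.
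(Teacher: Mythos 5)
Your proof is correct and follows essentially the route the paper itself indicates (it cites \cite[Theorem 2]{BL} and remarks that compactness is an instance of Banach--Alaoglu for the weak-$\ast$ topology): your Tychonoff embedding is exactly that argument made self-contained, and you correctly isolate and handle the one genuinely non-formal point, namely that $\cM_{C,U}(G)$ is closed under vague limits, via the lower semicontinuity of $\mu \mapsto \left|\mu\right|(t+U)$ on the open sets $t+U$. The metrisability argument via separability of each $C(G:K_n)$ and the compact-to-Hausdorff $\eps/3$ comparison is likewise the standard one and is sound.
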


Let us now introduce the following two definitions, and discuss their relevance. Note again that the infinite version of the counting autocorrelation and diffraction are novel definitions and these have not appeared elsewhere to the best of our knowledge.
{
\begin{definition}\label{def:gamma_n} Let $\mu \in \cM^\infty(G)$ be a translation bounded measure, and let $\cA=(A_n)_n$ be a van Hove sequence. Define $\mu_n:=\mu|_{A_n}$ for each $n$, and set
\[
\gamma_{\mu_n} := \left\{ 
\begin{array}{cc}
\frac{1}{\left|\mu_n\right|(G)} \mu_n\ast \widetilde{\mu_n}, & \mbox{ if } \mu_n \neq 0 \,, \\
0, & \mbox{ if } \mu_n =0 \,,
\end{array}
\right.
\]
and
\[
\gamma_{n} :=
\frac{1}{\vol(A_n)}\mu_n\ast \widetilde{\mu_n}  \, ,
\]
where $\vol$ denotes Haar measure.

We say that the \textbf{density autocorrelation} $\gamma_{\operatorname{dens}}$ of $\mu$ exists with respect to $\cA$ if the limit
\[
\gamma_{\operatorname{dens}}=\lim_{n \to \infty} \gamma_n
\]
exists in the vague topology.  
In the same way, we say that the \textbf{counting autocorrelation} $\gamma_{\operatorname{count}}$ of $\mu$ exists with respect to $\cA$ if the limit
\[
\gamma_{\operatorname{count}}=\lim_{n \to \infty} \gamma^{}_{\mu^{}_n}
\]
exists in the vague topology. 

As limits of positive definite measures, $\gamma_{\operatorname{count}}, \gamma_{\operatorname{dens}}$ are positive definite measures \cite[Lemma 4.11.10]{MoSt} and hence Fourier transformable. We refer to the positive measures $\widehat{\gamma_{\operatorname{dens}}}$ and  $\widehat{\gamma_{\operatorname{count}}}$  as the \textbf{density diffraction} and the \textbf{counting diffraction} of $\varLambda$ with respect to $\cA$. 
\end{definition}

\begin{remark}
If $\varLambda \subseteq G$ is a (weakly) uniformly discrete point set, and $\cA=(A_n)_n$ is van Hove sequence, then 
    \begin{align*}
   \gamma_{\operatorname{dens}}&=\lim_{n \to \infty} \gamma_n \\
\gamma_{\operatorname{count}}&=\lim_{n \to \infty} \gamma^{}_{F^{}_n} \,,
    \end{align*}
    where $F_n:=\varLambda \cap A_n$ and
\begin{align*}
\gamma^{}_{F^{}_n} &:= \left\{ 
\begin{array}{cc}
\frac{1}{\card(F_n)}\delta_{F_n}\ast \widetilde{\delta_{F_n}},& \mbox{ if } F_n \neq \emptyset\,, \\
0,&\mbox{ if } F_n =\emptyset\,, \\
\end{array}
\right. \\
\gamma_{n} &:=
\frac{1}{\vol(A_n)}\delta_{F_n}\ast \widetilde{\delta_{F_n}} \,.
\end{align*}
\end{remark}

Let us note here the following simple relation which we will often use:
\begin{equation}\label{eq:density_ratio}
\gamma_n =\frac{\left| \mu \right|(A_n)}{\vol(A_n)} \gamma_{\mu_n} \,.
\end{equation}
In particular, for (weakly) uniformly discrete point sets $\varLambda \subseteq G$ we have
\[
\gamma_n =\frac{\card(\varLambda \cap A_n)}{\vol(A_n)} \gamma^{}_{F^{}_n} \,.
\]
}

\smallskip

Using a standard argument, we explain below in Proposition \ref{prop:diffraction_exists} that if $\varLambda$ is a uniformly discrete point set in $G$, and $\cA=(A_n)_n$ is any van Hove sequence, then there is a subsequence of $\cA$ with respect to which the counting and density autocorrelations of $\varLambda$ both exist.

\begin{remark}\label{rem-supp} {Let $\varLambda \subseteq G$ be a uniformly discrete point set.} Since the supports of each measure satisfy $\supp(\gamma_n), \supp(\gamma^{}_{F^{}_n}) \subseteq \varLambda - \varLambda$ we have 
\[
\supp(\gamma_{\operatorname{dens}}), \,\supp(\gamma_{\operatorname{count}})\ \subseteq \ \overline{\varLambda - \varLambda} \,.
\]
{The same conclusion holds more generally for all weighted combs supported inside $\varLambda$.}
\end{remark}

\smallskip




In the next section, we will show that the density and counting diffractions are related via a relation of the type
\[
\widehat{\gamma_{\operatorname{dens}}} = \dens(\varLambda) \widehat{\gamma_{\operatorname{count}}} \,.
\]
In particular, in Theorem~\ref{thm:rel-densn0} and Theorem~\ref{thm:rel-dens0} we will show the following:

\begin{itemize}
\item{} For point sets of positive density, the counting and density diffraction coincide up to multiplication by non-zero constants. In this case, the two approaches to the diffraction spectrum are equivalent, and either one can be used.
\item{} For point sets of zero density, the density diffraction is zero and the counting diffraction is non-zero. In this case, the counting diffraction must be used.
\end{itemize}

\section{Density and autocorrelation}\label{sect:autrel}

In this section, we study the basic properties of the density and counting autocorrelations. 

\subsection{Density and counting autocorrelations}

\begin{definition} Let $\varLambda \subseteq G$ be uniformly discrete and $\cA = (A_n)_n$ a van Hove sequence. We define the \textbf{lower} and \textbf{upper density of } $\varLambda$ \textbf{with respect to} $\cA$ by 
\begin{align*}
\ldens_{\cA}(\varLambda)&:= \liminf_{n\to\infty} \frac{\card(\varLambda \cap A_n)}{\vol(A_n)} \quad\text{and} \\
\udens_{\cA}(\varLambda)&:= \limsup_{n \to \infty} \frac{\card(\varLambda \cap A_n)}{\vol(A_n)},
\end{align*}
respectively. We say that $\varLambda$ has \textbf{positive density} with respect to $\cA$ if
\[
\ldens_{\cA}(\varLambda)>0 \,.
\]
We say that the \textbf{density of $\varLambda$ exists with respect to $\cA$} if the limit
\[
\dens_{\cA}(\varLambda):= \lim_{n \to \infty} \frac{\card(\varLambda \cap A_n)}{\vol(A_n)} 
\quad\text{exists.}
\]
\end{definition}

By definition, the density of $\varLambda$ exists with respect to $\cA$ if and only if $\ldens_{\cA}(\varLambda)=\udens_{\cA}(\varLambda)$, and if this is the case, then we have
\[
\ldens_{\cA}(\varLambda)=\udens_{\cA}(\varLambda)=\dens_{\cA}(\varLambda) \,.
\]

{
Let us next recall the Besicovitch seminorm for measures.
\begin{definition} For a measure $\mu$ on $G$ and a van Hove sequence $\cA = (A_n)_n$ we define the \textbf{Besicovitch seminorm} $\| \mu\|_{b,\cA}$ via
\[
\| \mu \|_{b,\cA} := \limsup_{n \to \infty} \frac{1}{\vol(A_n)} \left|\mu\right|(A_n) \,.
\]
We say that the \textbf{mean} of $\mu$ exists with respect to $\cA$ if the following limit exists: 
\[
M_{\cA}(\mu):= \lim_{n \to \infty} \frac{\mu(A_n)}{\vol(A_n)} \,.
\]
\end{definition}

It is immediate that for a point set $\varLambda \subseteq G$ and a van Hove sequence $\cA$ we have 
\[
\overline{\dens}_{\cA}(\varLambda) = \| \delta_{\varLambda} \|_{b,\cA} \,.
\]
Furthermore, the density of $\varLambda$ exists if and only if the mean of $\delta_{\varLambda}$ exists, and they are equal in this case.

Finally, whenever the mean of $\mu$ exists with respect to $\cA$ we have \[
\left| M_\cA(\mu) \right| \leq \| \mu \|_{b,\cA} \,.
\]

The following result is a trivial consequence of \cite[Lemma~1.1(2)]{Sch00}.

\begin{lemma}\label{lem:finite_density}
If $\mu$ is translation bounded, then the sequence $\left(\frac{\left| \mu \right|(A_n)}{\vol(A_n)}\right)_n$ is bounded. Furthermore,
\[
\| \mu \|_{b,\cA}< \infty \,.
\]
In particular, for an uniformly discrete set $\varLambda \subseteq G$ we have
\[
\udens_{\cA}(\varLambda) < \infty \,.
\]\qed
\end{lemma}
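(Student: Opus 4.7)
The plan is to bound $|\mu|(A_n)$ by a constant multiple of $\vol(A_n + U)$ for a fixed precompact neighborhood $U$, and then use the van Hove property to compare $\vol(A_n + U)$ with $\vol(A_n)$. First I would fix a precompact open neighborhood $U$ of the identity in $G$; then $U - U$ is also precompact, and translation boundedness supplies a finite constant $C_0 := \|\mu\|_{\overline{U - U}}$.

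Next comes a standard covering argument. For each $n$, choose a maximal subset $T_n \subseteq A_n$ with the property that the translates $\{t + U : t \in T_n\}$ are pairwise disjoint. Maximality forces every $x \in A_n$ to satisfy $(x + U) \cap (t + U) \neq \emptyset$ for some $t \in T_n$, whence $A_n \subseteq T_n + (U - U)$. Therefore
\[
|\mu|(A_n) \;\leq\; \sum_{t \in T_n} |\mu|(t + \overline{U - U}) \;\leq\; C_0 \, \card(T_n).
\]
On the other hand, the disjoint sets $\{t + U\}_{t \in T_n}$ all lie inside $A_n + U$, so $\card(T_n) \, \vol(U) \leq \vol(A_n + U)$, and dividing,
\[
\frac{|\mu|(A_n)}{\vol(A_n)} \;\leq\; \frac{C_0}{\vol(U)} \cdot \frac{\vol(A_n + U)}{\vol(A_n)}.
\]

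Finally I would invoke the van Hove condition. Since $U$ contains the identity, $A_n \subseteq A_n + \overline{U}$, and $(A_n + \overline{U}) \setminus A_n \subseteq (A_n + \overline{U}) \setminus A_n^\circ \subseteq \partial^{\overline{U}}(A_n)$ directly from the definition; hence $\vol(A_n + U)/\vol(A_n) \leq 1 + \vol(\partial^{\overline{U}}(A_n))/\vol(A_n) \to 1$. This proves that the sequence $(|\mu|(A_n)/\vol(A_n))_n$ is bounded, and taking $\limsup$ yields $\|\mu\|_{b,\cA} < \infty$. For the last assertion, a uniformly discrete set $\varLambda$ produces a translation bounded Dirac comb $\delta_\varLambda$ (any compact $K$ is covered by finitely many translates of a small open set meeting $\varLambda$ in at most one point), and $|\delta_\varLambda|(A_n) = \card(\varLambda \cap A_n)$, so the previous bound gives $\udens_{\cA}(\varLambda) < \infty$. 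The main obstacle, if one can call it that, is only the packing/covering estimate tying $\card(T_n)$ to $\vol(A_n + U)$; everything else is bookkeeping from the definitions already laid out in the preliminaries.
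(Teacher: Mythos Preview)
Your argument is correct. The paper does not actually prove this lemma: it simply declares the result ``a trivial consequence of \cite[Lemma~1.1(2)]{Sch00}'' and places a \qed. What you have written is, in effect, a self-contained proof of (the relevant part of) Schlottmann's lemma via the standard maximal-packing/covering estimate, followed by the van Hove boundary comparison $\vol(A_n+U)\le \vol(A_n)+\vol(\partial^{\overline U}(A_n))$. So your route is not genuinely different in spirit---it unpacks the cited reference---but it has the advantage of being entirely internal to the definitions given in the preliminaries, with no appeal to an external source. One minor remark: you might note explicitly that $T_n$ is finite (the disjoint translates $t+U$ have common positive measure $\vol(U)$ and sit inside the set $A_n+\overline U$ of finite measure), so that the sum $\sum_{t\in T_n}$ is unambiguous; otherwise the argument is complete as written.
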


The following is an immediate consequence.
\begin{corollary} Let $\cA$ be a van Hove sequence.
\begin{itemize}
    \item[(a)] If $\mu \in \cM^\infty(G)$ then the means of $\mu$ and $| \mu|$ exist along a subsequence $\cB$ of $\cA$.    
    \item[(b)] If $\varLambda \subseteq G$ is uniformly discrete, then the density of $\varLambda$ exists along a subsequence $\cB$ of $\cA$.
\end{itemize}\qed
\end{corollary}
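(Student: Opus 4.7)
\smallskip

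The plan is to invoke the preceding Lemma~\ref{lem:finite_density} together with a standard diagonal/Bolzano--Weierstrass extraction argument. For part (a), the Lemma tells us that the nonnegative sequence $\left(\frac{|\mu|(A_n)}{\vol(A_n)}\right)_n$ is bounded. Moreover, for every $n$ we have the pointwise estimate
\[
\left| \frac{\mu(A_n)}{\vol(A_n)} \right| \le \frac{|\mu|(A_n)}{\vol(A_n)},
\]
so the sequence $\left(\frac{\mu(A_n)}{\vol(A_n)}\right)_n$ is a bounded sequence of complex numbers. By Bolzano--Weierstrass, first extract a subsequence $\cA'$ of $\cA$ along which $\frac{|\mu|(A_n)}{\vol(A_n)}$ converges, so that $M_{\cA'}(|\mu|)$ exists. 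Then pass to a further subsequence $\cB$ of $\cA'$ along which $\frac{\mu(A_n)}{\vol(A_n)}$ converges in $\CC$ (if $\mu$ is complex, apply Bolzano--Weierstrass to the real and imaginary parts in two further extractions). Along this $\cB$, both $M_{\cB}(\mu)$ and $M_{\cB}(|\mu|)$ exist.

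For part (b), observe that if $\varLambda \subseteq G$ is uniformly discrete then $\delta_\varLambda \in \cM^\infty(G)$, as noted in the preliminaries. Since $\delta_\varLambda$ is a positive measure, it coincides with its total variation, and hence
\[
\frac{\delta_\varLambda(A_n)}{\vol(A_n)} = \frac{|\delta_\varLambda|(A_n)}{\vol(A_n)} = \frac{\card(\varLambda \cap A_n)}{\vol(A_n)}.
\]
Applying part (a) to $\mu = \delta_\varLambda$ yields a subsequence $\cB$ of $\cA$ along which this ratio converges, which is exactly the statement that $\dens_{\cB}(\varLambda)$ exists.

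There is no real obstacle here; the only mildly delicate point is to make sure the two (or more) extractions yielding simultaneous convergence of $\mu(A_n)/\vol(A_n)$ and $|\mu|(A_n)/\vol(A_n)$ are carried out successively so that a single common subsequence $\cB$ works for both limits in (a). Once that is done, part (b) is immediate.
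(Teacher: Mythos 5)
Your proof is correct and matches the paper's intent: the corollary is stated there as an immediate consequence of Lemma~\ref{lem:finite_density}, with no written proof, and the boundedness-plus-Bolzano--Weierstrass extraction you give (with successive subsequences to get simultaneous convergence, and the observation that $\delta_\varLambda$ is a positive translation bounded measure for part (b)) is exactly the argument being elided.
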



As promised in Section \ref{sec:preliminaries}, we can now prove that the counting and density diffractions exist for a translation bounded measure and any van Hove sequence, perhaps upon passing to a subsequence. In particular they exist for uniformly discrete point sets.

The following lemma is standard, but we include the proof for completeness.

\begin{proposition}\label{prop:diffraction_exists}
Let $\mu \in \cM^\infty(G)$ and let $\cA = (A_n)_n$ be a van Hove sequence. Then, for each open precompact set $U$, there exists some $C>0$ such that
\[
\gamma_n, \gamma_{\mu_n} \in \cM_{C,U}(G)\,, \qquad \text{for all } n\,.
\]
In particular, there exists a subsequence $\cB$ of $\cA$ such that $\gamma_{\operatorname{dens}}, \gamma_{\operatorname{count}}, M_{\cB}(\mu), M_{\cB}(|\mu|)$ exist and 
\[
\gamma_{\operatorname{dens}} = M_{\cB}(|\mu|) \gamma_{\operatorname{count}} \,.
\]
Furthermore, the conclusion holds whenever $\varLambda$ is uniformly discrete.
\end{proposition}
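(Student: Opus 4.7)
The plan is to apply the compactness in Lemma~\ref{lem:M_C_U_compact} after establishing a uniform translation bound on the sequences $(\gamma_n)$ and $(\gamma_{\mu_n})$, and then pass to the limit in relation~\eqref{eq:density_ratio} along a diagonally extracted subsequence.

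First I would establish the uniform bound. Fix an open precompact set $U\subseteq G$. For any $t\in G$, Fubini gives
\[
|\mu_n\ast\widetilde{\mu_n}|(t+U) \;\leq\; \int |\mu_n|\bigl((t+U)\pm y\bigr)\,d|\mu_n|(y) \;\leq\; \|\mu\|_U \cdot |\mu|(A_n),
\]
where the last inequality uses $\mu\in\cM^\infty(G)$ and that $\mu_n$ is supported in $A_n$. Dividing by $\vol(A_n)$ and invoking Lemma~\ref{lem:finite_density} (which bounds $|\mu|(A_n)/\vol(A_n)$ uniformly in $n$) yields $\|\gamma_n\|_U\le C_1$ for some $C_1>0$. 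Dividing instead by $|\mu_n|(G)$ (when nonzero; the finitely many trivial indices are harmless) gives $\|\gamma_{\mu_n}\|_U\le\|\mu\|_U$. Taking $C$ to be the maximum of $C_1$ and $\|\mu\|_U$, both sequences sit in $\cM_{C,U}(G)$.

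Next I would extract the common subsequence. By Lemma~\ref{lem:M_C_U_compact}, $\cM_{C,U}(G)$ is vaguely compact and metrisable, so $(\gamma_n)$ and $(\gamma_{\mu_n})$ each admit vaguely convergent subsequences. The scalar sequences $\mu(A_n)/\vol(A_n)$ and $|\mu|(A_n)/\vol(A_n)$ are bounded (again by Lemma~\ref{lem:finite_density}, noting $|\mu(A_n)|\le|\mu|(A_n)$), so they too have convergent subsequences. A standard diagonal argument produces a single subsequence $\cB$ of $\cA$ along which all four limits $\gamma_{\operatorname{dens}}$, $\gamma_{\operatorname{count}}$, $M_{\cB}(\mu)$, $M_{\cB}(|\mu|)$ exist.

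Finally, I would pass to the limit in~\eqref{eq:density_ratio}: along $\cB$, $\gamma_n=c_n\gamma_{\mu_n}$ with $c_n:=|\mu|(A_n)/\vol(A_n)\to M_{\cB}(|\mu|)$. Since the pairing with any fixed $f\in\Cc(G)$ is bilinear and $c_n$ is scalar, $(c_n\gamma_{\mu_n})(f)=c_n\gamma_{\mu_n}(f)\to M_{\cB}(|\mu|)\,\gamma_{\operatorname{count}}(f)$, giving $\gamma_{\operatorname{dens}}=M_{\cB}(|\mu|)\,\gamma_{\operatorname{count}}$. The uniformly discrete case is immediate because $\delta_\varLambda\in\cM^\infty(G)$. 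The only mild obstacle I foresee is the bookkeeping for indices with $\mu_n=0$ (if $\mu\ne 0$ these form an initial segment and can be ignored; otherwise both sides vanish) and keeping the four subsequences nested, which is why the diagonal extraction is the one genuinely non-cosmetic step.
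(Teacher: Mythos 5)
Your proposal is correct and follows essentially the same route as the paper: a uniform bound $\|\gamma_{\mu_n}\|_U\le\|\mu\|_V$ and $\|\gamma_n\|_U\le D\|\mu\|_V$ via Lemma~\ref{lem:finite_density} and \eqref{eq:density_ratio}, then compactness and metrisability of $\cM_{C,U}(G)$ from Lemma~\ref{lem:M_C_U_compact} to extract a common subsequence and pass to the limit. The only difference is that you verify the convolution estimate directly by Fubini, whereas the paper cites \cite[Lemma 6.1]{NS20} (working with a slightly larger open set $V\supseteq\overline{U}$); this is cosmetic.
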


\begin{proof}


Pick any open precompact set $V$ such that $\overline{U} \subseteq V$. Since $\nu_n:= \widetilde{\mu_n}/\left|\mu\right|(A_n)$ is a finite measure such that $\left| \nu_n \right|(G)=1$, by \cite[Lemma 6.1]{NS20} we have 
\[
\| \gamma_{\mu_n} \|_{U} \leq \| \mu_n\|_V \left| \nu_n \right|(G) \leq  \| \mu \|_V =: C_1
\]
where $C_1< \infty$ since $\mu \in \cM^\infty(G)$.

Next, since $\frac{\left| \mu \right|(A_n)}{\vol(A_n)}$ is bounded above by some constant $D$, from Lemma~\ref{lem:finite_density}, by \eqref{eq:density_ratio} we have 
\[
\| \gamma_{n} \|_{U} = \left\| \frac{\mu(A_n)}{\vol(A_n)} \gamma_{\mu_n} \right\|_{U} \leq D C_1 \,.
\]
The claim follows by picking $C= \max\{ C_1, DC_1 \}$.

The existence of the limit along subsequences follows now from \ref{lem:M_C_U_compact} and the boundedness of $\frac{\card(F_n)}{\vol(A_n)}$.

Finally, by definition, $\varLambda$ is uniformly discrete if and only if there exists a precompact open set $W$ such that
\[
\| \delta_{\varLambda} \|_{W}  \leq 1 \,.
\]
The claim follows immediately.
\end{proof}

\begin{remark} Let $\mu \in \cM^\infty(G)$ and let $\cA = (A_n)_n$ be a van Hove sequence. Then, any counting autocorrelation $\gamma_{\operatorname{count}}$ along subsequences of $\cA$ is positive definite and translation bounded. Therefore, by \cite[Thm.~4.10.10 and Thm.~ 4.10.12]{MoSt},  there exists positive definite measures $(\gamma_{\operatorname{count}})_{\operatorname{s}}$ and $(\gamma_{\operatorname{count}})_0$ such that the following Eberlein decomposition holds:
\begin{align*}
\gamma_{\operatorname{count}}&=(\gamma_{\operatorname{count}})_{\operatorname{s}} +(\gamma_{\operatorname{count}})_0 \\
\reallywidehat{(\gamma_{\operatorname{count}})_{{\operatorname{s}}}}&= \left(\widehat{\gamma_{\operatorname{count}}}\right)_{{\operatorname{pp}}} \\
\reallywidehat{(\gamma_{\operatorname{count}})_{0}}&= \left(\widehat{\gamma_{\operatorname{count}}}\right)_{\operatorname{c}} \,.
\end{align*}
Here, $\left(\widehat{\gamma_{\operatorname{count}}}\right)_{{\operatorname{pp}}}$ and $\left(\widehat{\gamma_{\operatorname{count}}}\right)_{{\operatorname{c}}}$ denote the pure point and continuous components of the counting diffraction.

Furthermore, if $\supp(\mu)$ is a subset of a Meyer set, the refined Eberlein decomposition of $\gamma_{\operatorname{count}}$ exists by \cite{NS20}.
\end{remark}
}

Let us now look at an example which shows why translation boundedness is an important assumption, even for sets of density zero. We construct a (non-uniformly discrete) set $\varLambda \subseteq \RR$ with zero density such that $\delta_{\varLambda} \notin \cM^\infty(\RR)$ and $\gamma^{}_{F^{}_n}$ has no vaguely convergent subsequence.

\begin{example} 
Define 
\begin{align*}
\varLambda_m &:= \left\{ 4^m - \frac{j}{2^{m+1}}: 1 \leq j \leq 2^m \right\}, \qquad  \qquad \varLambda :=  \bigcup_{m=1}^\infty\: \varLambda_m \,.
\end{align*}
Let us note here in passing that we start at $4^m - \frac{1}{2^{m+1}}$ and not $4^m$ to avoid having integers in $\varLambda_m$, as, in that case, the $F_n$ we introduce below would have an extra point whenever $n=4^m$. This extra point would not change any conclusion, but would make the proof slightly more technical.

By construction we have $\card(\varLambda_m)=2^m$ and $\varLambda_m \subseteq [4^m-\frac{1}{2}, 4^{m})$. This immediately implies that 
\[
F_n= \varLambda \cap [-n,n] = \bigcup_{j=1}^m \varLambda_j
\]
where $m= \lfloor \log_4n \rfloor$ is the last positive integer satisfying $4^m  \leq n$. Note here we are using $(A_n)_n = ([-n,n])_n$ as our usual averaging sequence. In particular, 
\[
\card(F_n)=2 + \cdots + 2^m = 2^{m+1} - 2 = 2(2^{ \lfloor \log_4n \rfloor}-1) \,.
\]
This implies that 
\[
\dens(\varLambda)= \lim_{n \to \infty} \frac{\card(F_n)}{\vol(A_n)} = \lim_{n \to \infty}  \frac{2(2^{ \lfloor \log_4n \rfloor}-1)}{2n} \leq \lim_{n \to \infty} \frac{2^{ \log_4 n}-1}{n} = \lim_{n \to \infty} \frac{\sqrt{n}-1}{n}=0 
\]
as claimed.

Now, fix some non-negative function $f \in \Cc(\RR)$ so that $f(x) \geq 1$ for all $ x \in [-
\frac{1}{2},\frac{1}{2}]$. 
Let $n \in \NN$ be arbitrary, and set 
\[
m={\lfloor \log_4n \rfloor} \,.
\]
Then,
\begin{align*}
\gamma^{}_{F^{}_n}(f) & = \frac{1}{2(2^{\lfloor \log_4n \rfloor}-1)} \sum_{x,y \in F_n} f(x-y) \geq \frac{1}{2(2^{\log_4n})} \sum_{x,y \in \varLambda_m } f(x-y) \,.
\end{align*}
Recall that $\varLambda_m \subseteq [4^m-\frac{1}{2}, 4^{m})$. Therefore, for all $x,y \in \varLambda_m$ we have 
 $x-y \in [-\frac{1}{2}, \frac{1}{2}]$ and hence $f(x-y) \geq 1$. It follows that 
\begin{align*}
\gamma^{}_{F^{}_n}(f) & \geq \frac{1}{2\sqrt{n}}  \sum_{x,y \in \varLambda_m } 1 = \frac{1}{2\sqrt{n}} \left(\card(\varLambda_m)\right)^2 \\
&= \frac{1}{2\sqrt{n}} 2^{2m} =  \frac{1}{2\sqrt{n}} 4^{\lfloor \log_4n \rfloor} \geq \frac{1}{2\sqrt{n}} 4^{ \log_4n -1}=\frac{1}{8} \sqrt{n} \,.
\end{align*}
Therefore, 
\[
\gamma^{}_{F^{}_n}(f) \geq \frac{1}{8} \sqrt{n} \qquad \text{ for all } n,
\]
which implies that $\gamma^{}_{F^{}_n}$ has no vaguely convergent subsequence.

\end{example}

Let us next prove the following result. { Item (a) below was implicitly proven in \cite[Prop.~5]{BHS}.}

\begin{lemma}\label{lem:ac-at-0} Let $\varLambda \subseteq G$ be uniformly discrete. 
\begin{itemize}
    \item[(a)] If $\gamma_{\operatorname{dens}}$ exists with respect to $\cA =(A_n)_n$, then $\dens_{\cA}(\varLambda)$ exists and 
    \[
    \gamma_{\operatorname{dens}}(\{0\})= \dens_{\cA}(\varLambda) \,.
    \]
    \item [(b)] If $\gamma_{\operatorname{count}}$ exists with respect to $\cA$, then the following are equivalent: \medskip
    \begin{itemize}
        \item[(i)] 
        $
            \gamma_{\operatorname{count}}(\{0\})=1.
        $
        \item[(ii)] If $F_n= \varLambda \cap A_n$, then $(F_n)_n$ contains a subsequence of non-empty sets.
        \item[(iii)] There exists some $N$ such that for all $n >N$ we have $F_n \neq \emptyset$.
        \item[(iv)]
        $
            \gamma_{\operatorname{count}}\neq 0.
        $
    \end{itemize}
\end{itemize}
\end{lemma}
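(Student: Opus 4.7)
The geometric fact that powers both parts is that uniform discreteness of $\varLambda$ yields an open neighborhood $V$ of $0 \in G$ with $\overline{\varLambda - \varLambda} \cap V = \{0\}$. I would obtain this by taking an open $U \ni 0$ witnessing uniform discreteness, so that $(\varLambda - \varLambda) \cap U = \{0\}$, choosing an open $V$ with $0 \in V$ and $\overline{V} \subseteq U$ (using that $G$ is locally compact Hausdorff), and observing that any net in $\varLambda - \varLambda$ converging to a point of $V$ is eventually in $U$ by openness of $V$, hence eventually equal to $0$. Fix $\varphi \in \Cc(G)$ with $\supp(\varphi) \subseteq V$ and $\varphi(0) = 1$. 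Since each of the measures $\gamma_n, \gamma^{}_{F^{}_n}, \gamma_{\operatorname{dens}}, \gamma_{\operatorname{count}}$ has support inside $\overline{\varLambda - \varLambda}$ by Remark~\ref{rem-supp}, evaluation on $\varphi$ isolates the atom at the origin: $\nu(\varphi) = \nu(\{0\})$ for each such $\nu$.

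For part (a), the expansion $\delta_{F_n} \ast \widetilde{\delta_{F_n}} = \sum_{x,y \in F_n} \delta_{x-y}$ identifies the coefficient of $\delta_0$, giving $\gamma_n(\{0\}) = \card(F_n)/\vol(A_n)$. Applying vague convergence $\gamma_n \to \gamma_{\operatorname{dens}}$ to the test function $\varphi$ yields
\[
\lim_{n \to \infty} \frac{\card(F_n)}{\vol(A_n)} \ = \ \lim_{n \to \infty} \gamma_n(\varphi) \ = \ \gamma_{\operatorname{dens}}(\varphi) \ = \ \gamma_{\operatorname{dens}}(\{0\}) \,,
\]
which simultaneously shows that $\dens_{\cA}(\varLambda)$ exists and coincides with $\gamma_{\operatorname{dens}}(\{0\})$.

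For part (b), the counting normalization gives $\gamma^{}_{F^{}_n}(\{0\}) = \mathbf{1}_{F_n \neq \emptyset}$. The same test-function argument shows that the $\{0,1\}$-valued sequence $\mathbf{1}_{F_n \neq \emptyset}$ converges to $\gamma_{\operatorname{count}}(\{0\})$, hence is eventually constant. This produces the dichotomy that either $F_n$ is eventually empty (forcing $\gamma_{\operatorname{count}} = 0$ and $\gamma_{\operatorname{count}}(\{0\})=0$) or $F_n$ is eventually non-empty (forcing $\gamma_{\operatorname{count}}(\{0\}) = 1$). From this I would close the logical cycle: (i)$\Rightarrow$(iv) is trivial; (iv)$\Rightarrow$(ii) by contrapositive, since eventual emptiness kills $\gamma_{\operatorname{count}}$; (ii)$\Rightarrow$(iii) by the dichotomy just established; (iii)$\Rightarrow$(i) because $\gamma^{}_{F^{}_n}(\{0\}) = 1$ for all large $n$. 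The only non-routine step throughout is the local support identification $\overline{\varLambda - \varLambda} \cap V = \{0\}$; everything afterwards is bookkeeping of the atom at the origin.
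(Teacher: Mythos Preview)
Your proof is correct and follows essentially the same approach as the paper: both isolate the atom at $0$ by choosing a compactly supported test function whose support meets $\overline{\varLambda-\varLambda}$ only at $\{0\}$, then read off $\nu(\{0\})=\nu(\varphi)$ via vague convergence and Remark~\ref{rem-supp}. The paper simply asserts that $0$ is isolated in $\overline{\varLambda-\varLambda}$ while you spell it out; one cosmetic slip in that argument---the net is eventually in $U$ because $U$ (not $V$) is an open neighborhood of the limit---does not affect the conclusion.
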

\begin{proof}
First, let us note that since $\varLambda$ is uniformly discrete, $0$ is an isolated point of $\overline{\varLambda-\varLambda}$. Therefore, there exists some open set $U\subseteq G$ containing $0$ such that 
\[
(\overline{\varLambda - \varLambda}) \cap U = \{ 0 \} \,.
\] 
For the entire proof, we fix some $f \in \Cc(G)$ such that $f(0)=1, 0 \leq f \leq 1$, and $\supp(f) \subseteq U $.

\bigskip

\textbf{(a)}  
 Then, Remark~\ref{rem-supp} implies that 
\begin{align*}
\gamma(f) &= f(0) \gamma(\{ 0 \})= \gamma(\{ 0\}) \\
\gamma_n(f) &= f(0) \gamma_n(\{ 0 \})= \frac{\card(F_n)}{\vol(A_n)} \,.
\end{align*}
Therefore,
\[
\gamma(\{ 0\})=\gamma(f)=\lim_{n \to \infty} \gamma_n(f) \,,
\]
and so (a) follows.

\bigskip

\textbf{(b)} Using Remark~\ref{rem-supp} again, we have 
\begin{align*}
\gamma^{}_{F^{}_n}(f) &= \gamma^{}_{F^{}_n}(\{0\})= 
\left\{ \begin{array}{cc}
     1, & \mbox{ if } F_n \neq \emptyset\,,  \\
    0,  &  \mbox{ if }  F_n = \emptyset \,.
\end{array}
\right. \\
\gamma_{\operatorname{count}}(f) &=\gamma_{\operatorname{count}}(\{ 0\}) \,.
\end{align*}
Since $\gamma^{}_{F^{}_n}$ converges vaguely to $\gamma_{\operatorname{count}}$, we get 
\[
\gamma_{\operatorname{count}}(\{ 0\})=\gamma_{\operatorname{count}}(f) = \lim_{n \to \infty}  \gamma^{}_{F^{}_n}(f) = \lim_{n \to \infty} 
\left\{ \begin{array}{cc}
     1, & \mbox{ if } F_n \neq \emptyset\,,  \\
    0,  &  \mbox{ if }  F_n = \emptyset \,.
\end{array} 
\right.
\]
Now, the implications
\textbf{(iii) $\Longleftrightarrow$ (ii) $\Longleftrightarrow$ (i) $\Longrightarrow$ (iv)} are immediate. We will complete the proof by showing that \textbf{(iv) $\Longrightarrow$ (iii)}. If we assume by contradiction that (iii) fails, then we can find a subsequence $(F_{k_n})_n$ such that $F_{k_n}=\emptyset$.
But then
\[
\gamma_{\operatorname{count}}=\lim_{n \to \infty} \gamma^{}_{F^{}_n}= \lim_{n \to \infty} \gamma^{}_{F_{k_n}}= \lim_{n \to \infty} \gamma_{\emptyset}= \lim_{n \to \infty} 0 =0 , 
\]
a contradiction. \qedhere
\end{proof} %

{ Lemma~\ref{lem:ac-at-0} does not seem to have an equivalent for translation bounded measures. For weighted Dirac combs $\mu=\sum_{x \in G}  \mu(\{x\}) \delta_x$ with uniformly discrete support the proof of Lemma~\ref{lem:ac-at-0} can be modified to show that  
\begin{align*}
\gamma_{\operatorname{dens}}(\{0\})&=M_{\cA}(|\mu|^2), \\
\gamma_{\operatorname{count}}(\{0\}) &=\lim_n  \frac{ \left| \mu \right|^2(A_n)} {\left| \mu \right|(A_n)}  \,,
\end{align*}
where 
\[
|\mu|^2:= \sum_{x \in G} \left| \mu(\{x\}) \right|^2 \delta_x \,.
\]
In particular, if $|\mu|$ has a positive mean, then 
\[
\gamma_{\operatorname{count}}(\{0\}) =\lim_n  \frac{M_{\cA}(|\mu|^2)}{M_{\cA}(|\mu|)} \,.
\]

Beyond measures with uniformly discrete we cannot say too much. For example, when $\mu = \lambda$ is the Lebesgue measure on $\RR$, we have for all van Hove sequences $\cA$ that:
\begin{align*}
\gamma_{\operatorname{dens}}&=\gamma_{\operatorname{count}}=\lambda \,, \\
\gamma_{\operatorname{dens}}(\{0\}) &= \gamma_{\operatorname{count}}(\{0\}) =0 \,.\\
M_{\cA}(\mu)&=1 \,.
\end{align*}

}

We can now prove that for { translation bounded measures with positive absolute value mean} there is no difference between working with density or counting autocorrelation. { In particular, this holds for sets of positive density.} The same is not true in the case of zero-density sets. 

{ We start by proving the result for translation bounded measures, and then look at uniformly discrete point sets. 

\begin{theorem} Let $\mu \in \cM^\infty(G)$ and $\cA = (A_n)_n$ be a van Hove sequence. Assume that 
\[
\liminf_{n\to\infty} \frac{|\mu|(A_n)}{\vol(A_n)} >0 \,.
\]
\begin{itemize}
\item[(a)] If $\gamma_{\operatorname{count}}$ is any cluster point of $\gamma_{\mu_n}$ then there exists some $C \in (0, \infty)$ and a cluster point $\gamma_{\operatorname{dens}}$ of $\gamma_n$ such that 
\[
\gamma_{\operatorname{dens}}=C \gamma_{\operatorname{count}} \,.
\]
\item[(b)] If $\gamma_{\operatorname{dens}}$ is any cluster point of $\gamma_{n}$ then there exists some $D \in (0, \infty)$ and a cluster point $\gamma_{\operatorname{count}}$ of $\gamma_{\mu_n}$ such that 
\[
\gamma_{\operatorname{count}}=D \gamma_{\operatorname{dens}} \,.
\]
\end{itemize}
\end{theorem}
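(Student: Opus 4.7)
The proof rests entirely on the scalar identity \eqref{eq:density_ratio}, namely
\[
\gamma_n \;=\; c_n\,\gamma_{\mu_n}, \qquad c_n := \frac{|\mu|(A_n)}{\vol(A_n)}.
\]
Since vague convergence is preserved under multiplication by a scalar that also converges, the whole theorem reduces to controlling the scalars $c_n$ along subsequences. By Lemma~\ref{lem:finite_density} the sequence $(c_n)_n$ is bounded above by some constant $D<\infty$, and by the standing hypothesis $\liminf_n c_n>0$, there is some $\varepsilon>0$ with $c_n\in[\varepsilon,D]$ for all sufficiently large $n$. In particular $\mu_n\neq 0$ eventually, so the objects $\gamma_{\mu_n}$ and $\gamma_n$ are unambiguously linked by the displayed identity.

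For part (a), start from a cluster point $\gamma_{\operatorname{count}}$ of $(\gamma_{\mu_n})_n$ along some subsequence $(n_k)_k$. Because $(c_{n_k})_k$ lies in the compact interval $[\varepsilon,D]$, we can extract a further subsequence $(n_{k_j})_j$ with $c_{n_{k_j}}\to C\in[\varepsilon,D]\subseteq(0,\infty)$. Applying both factors to an arbitrary $f\in\Cc(G)$ gives
\[
\gamma_{n_{k_j}}(f) \;=\; c_{n_{k_j}}\,\gamma_{\mu_{n_{k_j}}}(f) \;\longrightarrow\; C\,\gamma_{\operatorname{count}}(f),
\]
so $\gamma_{n_{k_j}}\to C\gamma_{\operatorname{count}}$ vaguely. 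Setting $\gamma_{\operatorname{dens}}:=C\gamma_{\operatorname{count}}$ yields the desired cluster point of $(\gamma_n)_n$ with $\gamma_{\operatorname{dens}}=C\gamma_{\operatorname{count}}$.

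Part (b) is symmetric. Given a cluster point $\gamma_{\operatorname{dens}}$ of $(\gamma_n)_n$ along a subsequence $(n_k)_k$, extract a sub-subsequence along which $c_{n_{k_j}}\to C\in[\varepsilon,D]$; then $C>0$ and we may divide:
\[
\gamma_{\mu_{n_{k_j}}}(f) \;=\; \frac{1}{c_{n_{k_j}}}\,\gamma_{n_{k_j}}(f) \;\longrightarrow\; \frac{1}{C}\,\gamma_{\operatorname{dens}}(f).
\]
Setting $D:=1/C\in(0,\infty)$ and $\gamma_{\operatorname{count}}:=D\gamma_{\operatorname{dens}}$ exhibits $\gamma_{\operatorname{count}}$ as a cluster point of $(\gamma_{\mu_n})_n$ with $\gamma_{\operatorname{count}}=D\gamma_{\operatorname{dens}}$.

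There is essentially no hard step: the only subtlety is that the natural proof does not produce the \emph{same} subsequence for both sides, which is why the statement is phrased in terms of cluster points rather than limits. The liminf hypothesis is used exactly once, and only to ensure $C>0$ so that the division in part (b) is legitimate and the constants in both parts are strictly positive; without it one could have $c_n\to 0$ and obtain only the trivial relation $\gamma_{\operatorname{dens}}=0$, which is precisely the zero-density pathology that motivated introducing the counting autocorrelation in the first place.
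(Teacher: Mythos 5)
Your proposal is correct and follows essentially the same route as the paper: both rest on the identity \eqref{eq:density_ratio} together with extracting a further subsequence along which the scalar ratio $|\mu|(A_n)/\vol(A_n)$ converges to a limit that is finite (by Lemma~\ref{lem:finite_density}) and strictly positive (by the liminf hypothesis). The paper merely packages the compactness step into an appeal to Proposition~\ref{prop:diffraction_exists}, whereas you unfold the Bolzano--Weierstrass extraction on the scalars explicitly; the content is identical.
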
 
\begin{proof}
\textbf{(a)} Let $\cA'$ be the subsequence of $\cA$ along which  $\gamma_{\operatorname{count}}$ is the limit. By Lemma~\ref{prop:diffraction_exists} there exists a subsequence $\cB$ of $\cA'$ along which the density autocorrelation of $\mu$ and the mean $M_{\cB}(|\mu|)$ exist and 
\[
\gamma_{\operatorname{dens}}=C \gamma_{\operatorname{count}} \,
\]
for $C=M_{\cB}(|\mu|) >0$. 

\bigskip

\textbf{(b)} The proof of (b) is a symmetrical argument, in which $D=1/M_{\cB}(|\mu|)$.
\end{proof}

In particular, for point sets, we get:
}
\begin{theorem}\label{thm:rel-densn0} Let $\varLambda \subseteq G$ be uniformly discrete and $\cA = (A_n)_n$ be a van Hove sequence. Suppose $\varLambda$ has a positive density with respect to $\cA$.
\begin{itemize}
\item[(a)] If $\gamma_{\operatorname{count}}$ is any cluster point of $\gamma^{}_{F^{}_n}$ then there exists some $C \in (0, \infty)$ and a cluster point $\gamma_{\operatorname{dens}}$ of $\gamma_n$ such that 
\[
\gamma_{\operatorname{dens}}=C \gamma_{\operatorname{count}} \,.
\]
\item[(b)] If $\gamma_{\operatorname{dens}}$ is any cluster point of $\gamma_{n}$ then there exists some $D \in (0, \infty)$ and a cluster point $\gamma_{\operatorname{count}}$ of $\gamma^{}_{F^{}_n}$ such that 
\[
\gamma_{\operatorname{count}}=D \gamma_{\operatorname{dens}} \,.
\]
\end{itemize}\qed
\end{theorem}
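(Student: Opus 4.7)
The plan is to deduce Theorem~\ref{thm:rel-densn0} as a direct specialization of the preceding theorem applied to the Dirac comb $\mu := \delta_{\varLambda}$. Everything is lined up for this: since $\varLambda$ is uniformly discrete, we have $\delta_{\varLambda} \in \cM^\infty(G)$, and the restrictions satisfy $\mu_n = \delta_{\varLambda}|_{A_n} = \delta_{F_n}$, so the ``measure versions'' $\gamma_n$ and $\gamma_{\mu_n}$ coincide exactly with the point-set versions $\gamma_n$ and $\gamma^{}_{F^{}_n}$ appearing in the statement. In particular the hypothesis of the measure theorem becomes
\[
\liminf_{n\to\infty} \frac{|\delta_{\varLambda}|(A_n)}{\vol(A_n)} = \liminf_{n\to\infty} \frac{\card(\varLambda \cap A_n)}{\vol(A_n)} = \ldens_{\cA}(\varLambda) > 0,
\]
which is precisely the positive density assumption.

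For part (a), given a cluster point $\gamma_{\operatorname{count}}$ of $(\gamma^{}_{F^{}_n})_n$ along some subsequence $\cA'$ of $\cA$, the preceding theorem (applied to $\delta_{\varLambda}$ along $\cA'$) supplies a further subsequence $\cB$ along which the mean $M_{\cB}(|\delta_{\varLambda}|) = \dens_{\cB}(\varLambda)$ exists, and along which $\gamma_n$ converges to some $\gamma_{\operatorname{dens}}$ with
\[
\gamma_{\operatorname{dens}} = C\,\gamma_{\operatorname{count}}, \qquad C = \dens_{\cB}(\varLambda).
\]
Because $\cB$ is a subsequence of $\cA$ and $\ldens_{\cA}(\varLambda) > 0$, we have $C \geq \ldens_{\cA}(\varLambda) > 0$, so $C \in (0, \infty)$ as required. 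Part (b) is the symmetric statement, proved the same way with $D = 1/C$.

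In short, the substance of the proof has already been carried out in the translation bounded setting; what remains here is only the translation dictionary $\mu \leftrightarrow \delta_{\varLambda}$, the identification of $|\delta_{\varLambda}|(A_n)/\vol(A_n)$ with the density ratio, and the observation that positive lower density yields a strictly positive constant. No genuine obstacle arises; the only thing worth verifying carefully is that the subsequence $\cB$ produced by the measure-theoretic result inherits the positive density lower bound from $\cA$, which is automatic since liminf over a subsequence is at least the liminf over the original sequence.
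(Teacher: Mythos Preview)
Your proposal is correct and matches the paper's approach exactly: the paper states Theorem~\ref{thm:rel-densn0} immediately after the measure-theoretic version and marks it with \qed, treating it as a direct specialization to $\mu=\delta_\varLambda$. The translation dictionary you spell out (uniform discreteness gives $\delta_\varLambda\in\cM^\infty(G)$, $|\delta_\varLambda|(A_n)=\card(F_n)$, positive lower density survives along subsequences) is precisely what the paper leaves implicit.
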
 



\begin{remark} \
The relationship between the cluster point(s) $\gamma_{\operatorname{dens}}$ and the cluster point(s) $\gamma_{\operatorname{count}}$ is explained by \eqref{eq:density_ratio}:
\[
\gamma_n= \frac{\card(F_n)}{\vol(A_n)} \gamma^{}_{F^{}_n} \,.
\]
This relationship implies that, as long as $\frac{\card(F_n)}{\vol(A_n)}$ is bounded away from $0$, if two of $\gamma_n, \frac{\card(F_n)}{\vol(A_n)}, \gamma^{}_{F^{}_n}$ exist along $\cA$ then so does the third. Furthermore, by Lemma~\ref{lem:ac-at-0}, if $\gamma_n$ is convergent, then so is  $\frac{\card(F_n)}{\vol(A_n)}$.

This means that for point sets of positive density we have the following implications:

\begin{itemize}
    \item[(a)] If $\gamma_{\operatorname{dens}}$ exists along $\cA$ then, $\dens_{\cA}(\varLambda)$ and $\gamma_{\operatorname{count}}$ exist along $\cA$.
    \item[(b)] Assume that $\gamma_{\operatorname{count}}$ exists along $\cA$.  Then, $\gamma_{\operatorname{dens}}$ exists along $\cA$ if and only if $\dens_{\cA}(\varLambda)$ exists along $\cA$.
\end{itemize}
\end{remark}

In contrast, { for point sets of zero density,} the following result shows that $\gamma_{\operatorname{dens}}$ and $\gamma_{\operatorname{count}}$ are not proportional.

\begin{theorem}\label{thm:rel-dens0} Let $\varLambda \subseteq G$ be uniformly discrete and $\cA=(A_n)_n$ be a van Hove sequence for which $F_n=\varLambda \cap A_n$ are not eventually all empty. Suppose $\dens_\cA(\varLambda)=0$.
\begin{itemize}
\item[(a)] The autocorrelation $\gamma_{\operatorname{dens}}$ of $\varLambda$ exists with respect to $\cA$ and 
\[
\gamma_{\operatorname{dens}}=0 \,.
\]
\item[(b)] If $\gamma_{\operatorname{count}}$ is any cluster point of $\gamma^{}_{F^{}_n}$ then 
\[
\gamma_{\operatorname{count}} \neq 0 \,.
\]
\end{itemize}
\end{theorem}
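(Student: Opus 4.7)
Part (a) should follow immediately from the scaling relation \eqref{eq:density_ratio}, namely
\[
\gamma_n \;=\; \frac{\card(F_n)}{\vol(A_n)}\, \gamma^{}_{F^{}_n},
\]
combined with Proposition~\ref{prop:diffraction_exists}. The proposition produces constants $C>0$ and an open precompact $U$ so that $\gamma^{}_{F^{}_n} \in \cM_{C,U}(G)$ for every $n$; in particular, for each fixed $f \in \Cc(G)$ the scalar sequence $(\gamma^{}_{F^{}_n}(f))_n$ is uniformly bounded. Since $\dens_\cA(\varLambda)=0$ forces $\card(F_n)/\vol(A_n) \to 0$, multiplying yields $\gamma_n(f) \to 0$ for every $f \in \Cc(G)$, which is exactly vague convergence $\gamma_n \to 0$. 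Thus $\gamma_{\operatorname{dens}}$ exists and equals $0$.

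For part (b) I would reuse the localized test-function trick from the proof of Lemma~\ref{lem:ac-at-0}(b). Uniform discreteness of $\varLambda$ supplies an open neighbourhood $U$ of $0$ with $U \cap \overline{\varLambda-\varLambda}=\{0\}$; fix $f \in \Cc(G)$ with $f(0)=1$, $0\le f\le 1$, and $\supp(f)\subseteq U$. By Remark~\ref{rem-supp} each $\gamma^{}_{F^{}_n}$ is supported in $\overline{\varLambda-\varLambda}$, so
\[
\gamma^{}_{F^{}_n}(f) \;=\; \gamma^{}_{F^{}_n}(\{0\}) \;=\;
\begin{cases} 1, & F_n\neq \emptyset, \\ 0, & F_n=\emptyset. \end{cases}
\]
Given any cluster point $\gamma_{\operatorname{count}}$, realised as a vague limit $\gamma^{}_{F^{}_{n_k}} \to \gamma_{\operatorname{count}}$, the hypothesis that the $F_n$ are not eventually all empty lets me refine $(n_k)$ to a sub-subsequence $(n_{k_j})$ on which $F_{n_{k_j}} \neq \emptyset$ for every $j$. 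This sub-subsequence still converges vaguely to the same $\gamma_{\operatorname{count}}$, and $\gamma^{}_{F^{}_{n_{k_j}}}(f)=1$ for all $j$, so $\gamma_{\operatorname{count}}(f)=1$ in the limit and in particular $\gamma_{\operatorname{count}}\neq 0$.

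I do not anticipate any real obstacle, since the proof is a direct application of the machinery already set up: the scaling identity \eqref{eq:density_ratio}, the vague boundedness from Proposition~\ref{prop:diffraction_exists}, and Remark~\ref{rem-supp}. The one small piece of book-keeping that must be handled with care is the sub-subsequence selection in (b), where the hypothesis on the $F_n$ enters exactly to guarantee that a sub-subsequence with $F_{n_{k_j}}\neq\emptyset$ can be extracted without disturbing the vague limit.
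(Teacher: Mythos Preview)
Your argument is correct and tracks the paper closely. For part (b) you are essentially inlining the proof of Lemma~\ref{lem:ac-at-0}(b); the paper simply cites that lemma together with part (a) to conclude $\gamma_{\operatorname{count}}(\{0\})=1\neq 0$.

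For part (a) there is a small but genuine difference in route. The paper argues indirectly: it uses the vague compactness and metrisability of $\cM_{C,U}(G)$ to reduce to showing that every cluster point $\eta$ of $(\gamma_n)$ vanishes, and then invokes Proposition~\ref{prop:diffraction_exists} on a further subsequence to write $\eta=\dens_{\cB}(\varLambda)\,\gamma_{\operatorname{count}}=0$. Your approach is more direct: from $\gamma_{F_n}\in\cM_{C,U}(G)$ you read off a uniform bound on $|\gamma_{F_n}(f)|$ for each fixed $f\in\Cc(G)$, and the scaling relation \eqref{eq:density_ratio} together with $\card(F_n)/\vol(A_n)\to 0$ then gives $\gamma_n(f)\to 0$ immediately. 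Both routes rest on exactly the same inputs; yours avoids the detour through cluster points and subsequences.

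One caveat on your (b): the sub-subsequence extraction requires that infinitely many of the $F_{n_k}$, not merely of the full sequence $F_n$, are nonempty. Under the reading of the hypothesis used elsewhere in the paper (cf.\ Lemma~\ref{lem:gammacount} and Lemma~\ref{lemma:subset}), namely that the $F_n$ are \emph{eventually all} nonempty, this is automatic and the extraction step becomes unnecessary.
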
 
\begin{proof}
\textbf{(a)} By Lemma~\ref{prop:diffraction_exists} we have 
\[
\gamma_n\in \cM_{C,U}(G)
\]
for some $C>0$ and precompact open $U$. Since $\cM_{C,U}(G)$ is vaguely compact and metrisable, showing that $\gamma_n \to 0$ is equivalent to showing that if $\eta$ is a cluster point of $\gamma_n$ then $\eta =0$.

Let $\eta$ be a cluster point of $\gamma_n$ calculated along some subsequence $\cA'$ of $\cA$. By Lemma~\ref{prop:diffraction_exists}, there exists some subsequence $\cB$ of $\cA'$  such that 
\[
\eta = \dens_{\cB}(\varLambda) \gamma_{\operatorname{count}} \,.
\]
The zero density property gives $\dens_{\cB}(\varLambda)=0$ and hence $\eta=0$. This proves (a).

\textbf{(b)} By Lemma \ref{lem:ac-at-0} and (a),
\[
\gamma_{\operatorname{count}}(\{0\})=1 \neq 0 = \gamma_{\operatorname{dens}}(\{0\}) \,.\qedhere
\]
\end{proof}

{ If $\mu$ is a translation bounded measure such that $M_{\cA}(|\mu|)=0$, then the same proof as Theorem~\ref{thm:rel-dens0} shows that $\gamma_{\operatorname{dens}}=0$.

The equivalent statement for Theorem~\ref{thm:rel-dens0} (b) does not hold for arbitrary measures. In fact it does not hold even for positive weighted Dirac combs with uniformly discrete support. 

Below we give an example of an infinite positive measure $\mu$ with lattice support for which  $\gamma_{\operatorname{count}}=0$.

\begin{example} Let $(a_n)_n \in \ell^2 \backslash \ell^1$, such as for example $a_n=\frac{1}{n}$. Set
\[
\mu:= \sum_{n \in \NN} a_n \delta_n \,.
\]
Let $(A_n)_n =([-n,n])_n$. Then, 
\[
|\mu_n|(A_n)=\sum_{k=1}^n |a_k| \to \infty \,,
\]
as $(a_n)_n \notin \ell^1$. In particular, $|\mu|(\RR)=\infty$.

Now, a simple computation shows that 
\[
\gamma_{\mu_n}(\{0\}) =\frac{\sum_{k=1}^n |a_k|^2}{\sum_{k=1}^n |a_k|} \to 0 \,.
\]
Since $\gamma_{\mu_n}$ is positive definite and supported inside $\ZZ$ we have 
\[
\left| \gamma_{\mu_n}(\{k\}) \right| \leq \gamma_{\mu_n}(\{0\}) \,. 
\]
This immediately implies that $\gamma_{\mu_n}$ converges vaguely to $0$. Thus
\[
\gamma_{\operatorname{count}}=0 \,.
\]
\end{example}
\bigskip

For the rest of the paper we will restrict to the case of point sets.
}

\bigskip

\subsection{Point sets with finite local complexity}
 
If $\varLambda$ has finite local complexity (FLC), then the following lemma shows how to write the autocorrelation as a Dirac comb. { For weighted Dirac combs supported inside Meyer sets the result below was proven in \cite{BM}.}

\begin{lemma}\label{lem:gammadens} Let $\varLambda \subseteq G$ have FLC and let $\cA = (A_n)_n$ be a van Hove sequence. Then $\gamma_{\operatorname{dens}}$ exists if and only if for all $t \in \varLambda - \varLambda$ the limit
\[
\eta_{\operatorname{dens}}(t):= \lim_{n \to \infty} \frac{1}{\vol(A_n)} \card(F_n\cap (F_n-t)) = \dens_{\cA}(\varLambda \cap (\varLambda-t))
\]
exists. Moreover, in this case we have 
\[
\gamma_{\operatorname{dens}}= \sum_{t \in \varLambda - \varLambda} \eta_{\operatorname{dens}}(t) \delta_t \,.
\]
\end{lemma}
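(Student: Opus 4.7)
The plan is to exploit FLC, which makes $\varLambda - \varLambda$ locally finite, to write each $\gamma_n$ as a discrete Dirac comb and then reduce vague convergence to pointwise convergence of its atomic weights. Expanding the convolution, the pairs $(x,y) \in F_n \times F_n$ with $x - y = t$ are parametrized by $y \in F_n \cap (F_n - t)$, so
\[
\gamma_n \;=\; \sum_{t \in F_n - F_n} \frac{\card(F_n \cap (F_n - t))}{\vol(A_n)}\, \delta_t,
\]
with $F_n - F_n \subseteq \varLambda - \varLambda$. Local finiteness of $\varLambda - \varLambda$ implies that for every $f \in \Cc(G)$ the intersection $\supp(f) \cap (\varLambda - \varLambda) = \{t_1, \dots, t_k\}$ is finite, and consequently
\[
\gamma_n(f) \;=\; \sum_{i=1}^k f(t_i)\, \frac{\card(F_n \cap (F_n - t_i))}{\vol(A_n)}.
\]

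For the forward direction, suppose $\gamma_{\operatorname{dens}} = \lim_n \gamma_n$ exists. Fix $t \in \varLambda - \varLambda$ and, using local finiteness, choose an open neighbourhood $U$ of $t$ with $U \cap (\varLambda - \varLambda) = \{t\}$. Testing against any $f \in \Cc(G)$ with $\supp(f) \subseteq U$ and $f(t) = 1$ yields $\gamma_n(f) = \card(F_n \cap (F_n - t))/\vol(A_n)$, so $\eta_{\operatorname{dens}}(t)$ exists and equals $\gamma_{\operatorname{dens}}(f)$. Conversely, if every $\eta_{\operatorname{dens}}(t)$ exists, set $\gamma := \sum_{t \in \varLambda - \varLambda} \eta_{\operatorname{dens}}(t)\, \delta_t$, which is locally finite because $\varLambda - \varLambda$ is. The displayed formula above expresses $\gamma_n(f)$ as a \emph{finite} sum whose terms converge, so $\gamma_n(f) \to \gamma(f)$ for every $f \in \Cc(G)$. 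This establishes vague convergence and the Dirac comb identity for $\gamma_{\operatorname{dens}}$ simultaneously.

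Finally, to identify $\eta_{\operatorname{dens}}(t)$ with $\dens_{\cA}(\varLambda \cap (\varLambda - t))$, note that
\[
F_n \cap (F_n - t) \;=\; \bigl(\varLambda \cap (\varLambda - t)\bigr) \cap A_n \cap (A_n - t),
\]
so the symmetric difference with $(\varLambda \cap (\varLambda - t)) \cap A_n$ is contained in $\varLambda \cap \bigl(A_n \triangle (A_n - t)\bigr)$. Since $\delta_\varLambda$ is translation bounded and van Hove sequences are F\o lner (as noted after the definition of van Hove sequence), this error is $o(\vol(A_n))$, so the two densities agree. The main technical point is precisely this last boundary estimate combining translation boundedness of $\delta_\varLambda$ with the F\o lner property; everything else is bookkeeping enabled by the observation that FLC allows one to isolate each atom of $\varLambda - \varLambda$ by a test function.
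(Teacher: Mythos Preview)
Your proof is correct and mirrors the paper's argument essentially step for step: the same Dirac-comb expansion of $\gamma_n$, the same use of FLC to isolate each atom of $\varLambda-\varLambda$ by a test function, and the same boundary estimate for the density identification. One small sharpening: the F\o lner property alone controls $\vol(A_n\triangle(A_n-t))$, not $\delta_\varLambda(A_n\triangle(A_n-t))$; the paper closes this by noting $A_n\triangle(A_n-t)\subseteq\partial^{\{t,-t\}}(A_n)$ and invoking \cite[Lemma~1.1]{Sch00}, which combines translation boundedness with the full van Hove (not merely F\o lner) condition.
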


\begin{proof}
Let $f\in C_c(G)$, and then for any finite $n$, rearranging the sum shows that
\begin{align*}
\gamma_n(f) &=
\frac{1}{\vol(A_n)}\sum_{x,y\in F_n} f(y-x) \\ &=
\frac{1}{\vol(A_n)}\sum_{t\in \varLambda - \varLambda}\sum_{\substack{x\in F_n \\ x+t\in F_n}} f(t) \\ &=
\sum_{t\in \varLambda} \left(\frac{\card(F_n\cap (F_n-t))}{\vol(A_n)}\right)f(t) \\ &=
\left(\sum_{t\in \varLambda} \left(\frac{\card(F_n\cap (F_n-t))}{\vol(A_n)}\right)\delta_t\right)(f),
\end{align*}
where this sum contains at most finitely many nonzero terms, since $f$ has compact support and $\varLambda-\varLambda$ is locally finite.

If each limit
\[
\eta_{\operatorname{dens}}(t) =
\lim_{n \to \infty} \frac{\card(F_n\cap (F_n-t))}{\vol(A_n)}
\]
exists, then this shows that $\gamma_n$ vaguely converges to
\[
\gamma_{\operatorname{dens}}= 
\sum_{t\in \varLambda-\varLambda} \eta_{\operatorname{dens}}(t)\delta_t.
\]
Conversely, suppose the density diffraction $\gamma_{\operatorname{dens}}=\lim_{n \to \infty} \gamma_n$ exists. Let $t\in \varLambda-\varLambda$. Since $\varLambda-\varLambda$ is locally finite, we can find a function $f\in C_c(G)$ for which $f(t)=1$ and $f(s)=0$ for all other $s\in \varLambda-\varLambda$. In this case, we have
\[
\frac{\card(F_n\cap (F_n-t))}{\vol(A_n)} =
\gamma_n(f),
\]
which converges to $\gamma_{\operatorname{dens}}(f)$, showing that the limit $\eta_{\operatorname{dens}}(t)$ exists.

Finally, it remains to show that $\eta_{\operatorname{dens}}(t)=\dens_\cA(\varLambda\cap (\varLambda -t))$, i.e.
\begin{equation}\label{eq:two_limits_Fn}
\lim_{n \to \infty} \frac{\card(F_n\cap (F_n-t))}{\vol(A_n)} =
\lim_{n \to \infty} \frac{\card(\varLambda \cap (\varLambda -t)\cap A_n)}{\vol(A_n)},
\end{equation}
when either limit exists. Here, 
\[
F_n\cap (F_n-t) \subseteq \varLambda \cap (\varLambda - t)\cap A_n,
\]
with set difference
\begin{align*}
\left(\varLambda \cap (\varLambda - t)\cap A_n\right)
\setminus \left(F_n \cap(F_n-t)\right)  &\subseteq
\varLambda \cap (A_n\setminus  (A_n-t)).
\end{align*}
Therefore, we can bound
\begin{align*}
\left|\frac{\card(\varLambda \cap (\varLambda -t)\cap A_n)}{\vol(A_n)}-\frac{\card(F_n\cap (F_n-t))}{\vol(A_n)}\right| &\le
\frac{\card(\varLambda \cap (A_n\setminus (A_n-t)))}{\vol(A_n)} \\ &=
\frac{\delta_\varLambda(A_n\setminus (A_n-t))}{\vol(A_n)} \leq 
\frac{\delta_\varLambda(\partial^{ \{ t, -t\}} (A_n))}{\vol(A_n)} \to 0\\ 
\end{align*}
by \cite[Lemma~1.1]{Sch00}.  
The claim follows.
\end{proof}

The next result is proven in exactly the same way, and so we will skip the proof.

\begin{lemma}\label{lem:gammacount} Let $\varLambda \subseteq G$ have FLC and let $\cA = (A_n)_n$ be a van Hove sequence such that $F_n=A_n\cap \varLambda$ is eventually always nonempty. Then $\gamma_{\operatorname{count}}$ exists if and only if for all $t \in \varLambda - \varLambda$ the limit
\[
\eta_{\operatorname{count}}(t):= \lim_{n \to \infty} \frac{ \card \left( F_n \cap (-t+F_n) \right)}{ \card \left(F_n \right)}
\]
exists. Moreover, in this case we have 
\[
\gamma_{\operatorname{count}}= \sum_{t \in \varLambda - \varLambda} \eta_{\operatorname{count}}(t) \delta_t \,.
\]\qed
\end{lemma}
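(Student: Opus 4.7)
The plan is to mirror the proof of Lemma~\ref{lem:gammadens} almost verbatim, with $\vol(A_n)$ replaced by $\card(F_n)$. The hypothesis that $F_n$ is eventually nonempty ensures that $\gamma^{}_{F^{}_n}$ is given by the nontrivial formula $\frac{1}{\card(F_n)}\delta_{F_n}\ast\widetilde{\delta_{F_n}}$ for all sufficiently large $n$, so division by $\card(F_n)$ is legitimate in the limit. Note that, in contrast to Lemma~\ref{lem:gammadens}, there is no secondary identification of $\eta_{\operatorname{count}}(t)$ with an ``ambient'' density expression, so the proof is actually one step shorter.

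First, I would take an arbitrary test function $f \in C_c(G)$ and rearrange the defining sum for $\gamma^{}_{F^{}_n}$ by grouping pairs $(x,y)$ according to their difference $t = y-x \in \varLambda - \varLambda$:
\begin{align*}
\gamma^{}_{F^{}_n}(f)
&= \frac{1}{\card(F_n)}\sum_{x,y\in F_n} f(y-x) \\
&= \sum_{t\in \varLambda-\varLambda}\frac{\card\!\left(F_n\cap (F_n-t)\right)}{\card(F_n)}\,f(t).
\end{align*}
Because $f$ has compact support and $\varLambda-\varLambda$ is locally finite by FLC, only finitely many $t$ contribute for each $n$, and in fact only finitely many $t$ (those in $(\supp f)\cap(\varLambda-\varLambda)$) can contribute at all, uniformly in $n$. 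So this is a genuinely finite sum.

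For the forward direction, assuming all limits $\eta_{\operatorname{count}}(t)$ exist, I pass to the limit in this finite sum and obtain
\[
\lim_{n\to\infty} \gamma^{}_{F^{}_n}(f) = \sum_{t\in \varLambda-\varLambda} \eta_{\operatorname{count}}(t)\,f(t) = \Bigl(\sum_{t\in \varLambda-\varLambda} \eta_{\operatorname{count}}(t)\,\delta_t\Bigr)(f),
\]
which gives vague convergence and identifies the limit as $\sum_{t}\eta_{\operatorname{count}}(t)\delta_t$. For the converse, assume $\gamma_{\operatorname{count}}$ exists and fix $t\in\varLambda-\varLambda$. Using local finiteness of $\varLambda-\varLambda$, I pick $f\in C_c(G)$ equal to $1$ at $t$ and vanishing on $(\varLambda-\varLambda)\setminus\{t\}$; the expansion above then reduces to
\[
\gamma^{}_{F^{}_n}(f) = \frac{\card(F_n\cap(F_n-t))}{\card(F_n)}
\]
for all $n$ large enough that $F_n \neq \emptyset$, and the left-hand side converges to $\gamma_{\operatorname{count}}(f)$ by hypothesis. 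Hence $\eta_{\operatorname{count}}(t)$ exists and equals $\gamma_{\operatorname{count}}(f) = \gamma_{\operatorname{count}}(\{t\})$.

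The main (minor) obstacle is simply book-keeping around the eventually-nonempty hypothesis: one must observe that on the finite initial set of indices where $F_n = \emptyset$ (if any), $\gamma^{}_{F^{}_n}=0$, and these finitely many terms cannot affect any vague limit. No van Hove boundary estimate is needed here, since, unlike in Lemma~\ref{lem:gammadens}, we are not comparing $\card(F_n\cap(F_n-t))$ with $\card(\varLambda\cap(\varLambda-t)\cap A_n)$.
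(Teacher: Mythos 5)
Your proposal is correct and is exactly the argument the paper intends: the paper omits the proof of Lemma~\ref{lem:gammacount} precisely because it is ``proven in exactly the same way'' as Lemma~\ref{lem:gammadens}, and you carry out that adaptation faithfully, replacing $\vol(A_n)$ by $\card(F_n)$ and correctly observing that the van Hove boundary estimate comparing $\card(F_n\cap(F_n-t))$ with $\card(\varLambda\cap(\varLambda-t)\cap A_n)$ is not needed here (that comparison is deferred to Lemma~\ref{lem:gammacount_ud}). Your handling of the finitely many indices with $F_n=\emptyset$ and of the uniform finiteness of the sum via FLC is also correct.
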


Comparing to Lemma \ref{lem:gammadens}, we may not be able to say that the limit $\eta_{\operatorname{count}}(t)$ is the same as the limit
\[
\lim_{n \to \infty}\frac{\card(\varLambda\cap(-t+\varLambda)\cap A_n)}{\card(F_n)}
\]
for general $\varLambda$ and $\cA$. But for van Hove sequences of intervals in $\RR$, and non-trivial uniformly discrete sets $\varLambda \subseteq \RR$, we can do so. { The result below can be seen as a similar statement to \cite[Lemma~1.2]{Sch00}, but in the setting of counting autocorrelation instead of diffraction autocorrelation. }

\begin{lemma}\label{lem:gammacount_ud}
Let $\varLambda$ be a uniformly discrete point set in $\RR$, and let $\cA=(A_n)_n$ be a van Hove sequence of intervals for which $\card(F_n)=\card(\varLambda\cap A_n)\to \infty$ as $n\to\infty$.

Then for all $t \in \varLambda - \varLambda$ we have 
\[
\lim_{n \to \infty}\frac{\card(\varLambda\cap (-t+\varLambda)\cap A_n)- \card \left( F_n \cap (-t+F_n) \right)}{\card(F_n)} =0 \,.
\]
In particular, if $\varLambda$ has FLC, then 
$\eta_{\operatorname{count}}$ exists if and only if for all $t \in \RR$ the following limit exists 
\[
\eta_{\operatorname{count}}(t):= \lim_{n \to \infty} \frac{ \card \left( F_n \cap (-t+\varLambda) \right)}{ \card \left(F_n \right)}\, .
\]
Moreover, in this case, 
\[
\gamma_{\operatorname{count}}= \sum_{t \in \varLambda - \varLambda} \eta_{\operatorname{count}}(t) \delta_t \,.
\]
\end{lemma}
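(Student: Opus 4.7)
The plan is to show that the two counts $\card(\varLambda\cap(-t+\varLambda)\cap A_n)$ and $\card(F_n\cap(-t+F_n))$ differ by at most a constant depending only on $t$, so their difference becomes negligible once divided by $\card(F_n)\to\infty$. First I would note the inclusion
\[
F_n\cap(-t+F_n)\ \subseteq\ \varLambda\cap(-t+\varLambda)\cap A_n,
\]
since any $x$ in the left-hand side satisfies $x\in\varLambda$, $x+t\in\varLambda$, and $x\in A_n$. The set-theoretic difference consists of points $x\in\varLambda\cap A_n$ for which $x+t\in\varLambda$ but $x+t\notin A_n$; equivalently, $x\in \varLambda\cap(A_n\setminus(A_n-t))$.

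Now I would use that $A_n=[a_n,b_n]$ is an interval. A direct computation gives $A_n\setminus(A_n-t)=(b_n-t,b_n]$ when $t>0$ and $A_n\setminus(A_n-t)=[a_n,a_n-t)$ when $t<0$ (assuming $|t|<b_n-a_n$, which holds eventually since $b_n-a_n\to\infty$). In either case this set is contained in an interval of length $|t|$. Since $\varLambda$ is uniformly discrete, with separation constant $r>0$, it meets any interval of length $|t|$ in at most $\lceil |t|/r\rceil+1$ points. Hence
\[
0\ \le\ \card(\varLambda\cap(-t+\varLambda)\cap A_n)-\card(F_n\cap(-t+F_n))\ \le\ C(t),
\]
with $C(t)$ independent of $n$. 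Dividing by $\card(F_n)$, which tends to $\infty$ by hypothesis, yields the claimed limit.

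For the second part of the statement, I would combine this with Lemma~\ref{lem:gammacount}. Writing $\card(\varLambda\cap(-t+\varLambda)\cap A_n)=\card(F_n\cap(-t+\varLambda))$, the displayed limit shows that
\[
\frac{\card(F_n\cap(-t+\varLambda))}{\card(F_n)}\quad\text{and}\quad\frac{\card(F_n\cap(-t+F_n))}{\card(F_n)}
\]
have the same asymptotic behavior: one converges if and only if the other does, and to the same value. Thus Lemma~\ref{lem:gammacount} gives the FLC characterization of $\eta_{\operatorname{count}}(t)$ in the alternative form stated, and the Dirac-comb expression for $\gamma_{\operatorname{count}}$ then transfers verbatim from Lemma~\ref{lem:gammacount}.

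There is no real obstacle here; the only point requiring care is the switch from the general van Hove setting (where one would want to control $\delta_\varLambda(\partial^{\{t,-t\}}(A_n))/\card(F_n)$, which need not vanish since $\card(F_n)/\vol(A_n)$ may tend to zero) to the interval case, where the symmetric-difference $A_n\triangle(A_n-t)$ has \emph{absolute} size bounded by $2|t|$ rather than $o(\vol(A_n))$. This is exactly what makes the argument go through in the zero-density regime and is what distinguishes this statement from the analogous statement for $\gamma_{\operatorname{dens}}$.
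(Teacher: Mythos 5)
Your proof is correct and follows essentially the same route as the paper's: both arguments reduce the discrepancy to points of $\varLambda$ in $A_n\setminus(A_n-t)$ (the paper uses the slightly larger symmetric difference $(-t+A_n)\triangle A_n$), bound this by a constant $C(t)$ independent of $n$ via uniform discreteness and the interval structure of $A_n$, and divide by $\card(F_n)\to\infty$. The deduction of the FLC statement from Lemma~\ref{lem:gammacount} is likewise the same.
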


\begin{proof}
Let $t\in \RR$. Then the sets
\[
-t+(\varLambda\cap A_n) \quad\text{and}\quad
(-t+\varLambda)\cap A_n
\]
have symmetric difference contained in
\[
(-t+A_n)\;\triangle\; A_n.
\]
Because $A_n$ is an interval, this symmetric difference is no larger than two intervals of width $|t|$. Since $\varLambda$ is uniformly discrete, there is some minimum distance $r>0$ between points of $\varLambda$, and so $-t+(\varLambda\cap A_n)$ and $(-t+\varLambda)\cap A_n$ can differ by at most $2|t|/r$ points. Intersecting with $F_n=\varLambda\cap A_n$ shows that
\[
F_n\cap (-t+F_n) \quad\text{and}\quad
F_n\cap (-t+\varLambda) =
\varLambda\cap (-t+\varLambda)\cap A_n
\]
differ still by at most $2|t|/r$ elements.

By assumption, $\card(F_n)\to \infty$, and so $\displaystyle \lim_{n \to \infty} \frac{2|t|/r}{\card(F_n)}=0$, and therefore
\[
\lim_{n \to \infty}\frac{\card(F_n\cap (-t+F_n))-\card(\varLambda\cap(-t+\varLambda)\cap A_n)}{\card(F_n)} =0 \,.
\]
The remaining claims follow from  Lemma~\ref{lem:gammacount}. 
\end{proof}

\begin{remark} In $\RR^d$ the same proof fails, since the number of points in $\varLambda  \cap \left( (-t+A_n)\;\triangle\; A_n \right)$ can be unbounded as $n\to\infty$. To draw a similar conclusion for uniformly discrete point sets in $\RR^d$ using $(A_n)_n=([-n,n]^d)_n$ we would need the stronger assumption
\[
\lim_{n \to \infty} \frac{\card(F_n)}{n^{d-1}} = \infty \,.
\]
\end{remark}


Let us now cover a simple consequence of this, which we will use later in the paper.

\begin{proposition}\label{prop:no_autocorrelation_shift}
Let $\cA=(A_n)_n$ be a van Hove sequence consisting of intervals in $\RR$. If $\varLambda\subseteq \RR$ is a uniformly discrete point set with counting autocorrelation $\delta_0$ with respect to $\cA$, and for which $\displaystyle \lim_{n \to \infty} \card(\varLambda\cap A_n)=\infty$, then for any nonzero integer $k$, the set $\varLambda \cup (k+\varLambda)$ has counting autocorrelation and diffraction
\begin{align*}
\gamma_{\operatorname{count}} &=\delta_0 + \frac{\delta_{k}+\delta_{-k}}{2}\, , \\ 
\widehat{\gamma_{\operatorname{count}}}&=(1+\cos(2\pi k x)) \lambda \, ,
\end{align*}
with respect to $\cA$, where $\lambda$ is the Lebesgue measure.
\end{proposition}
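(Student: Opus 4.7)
The plan is to approximate $F_n':=\varLambda'\cap A_n$ by the combinatorially cleaner set $G_n:=F_n\cup(k+F_n)$, where $F_n:=\varLambda\cap A_n$, and to exploit the identity $\delta_{k+F_n}=\delta_k*\delta_{F_n}$ to rewrite the autocorrelation of $G_n$ as a convolution involving $\delta_{F_n}*\widetilde{\delta_{F_n}}$. The hypothesis $\gamma^{}_{F^{}_n}\to\delta_0$ will then transfer through vague continuity of convolution by a compactly supported measure.

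First I would establish two cardinality estimates. Since $A_n$ is an interval and $\varLambda$ is uniformly discrete with some separation $r>0$, a direct set-chase gives $F_n'\triangle G_n\subseteq(\varLambda\cup(k+\varLambda))\cap(A_n\triangle(k+A_n))$, and the right-hand side has cardinality bounded by a constant depending only on $|k|$ and $r$, uniformly in $n$. Second, because each $\gamma^{}_{F^{}_n}\ge 0$ converges vaguely to $\delta_0$, picking a nonnegative $\phi\in C_c(\RR)$ with $\phi(k)=1$ and $0\notin\supp(\phi)$ yields
\[
0\leq\gamma^{}_{F^{}_n}(\{k\})=\frac{|F_n\cap(k+F_n)|}{|F_n|}\leq\gamma^{}_{F^{}_n}(\phi)\longrightarrow\delta_0(\phi)=0,
\]
so if $J_n:=F_n\cap(k+F_n)$ then $|J_n|/|F_n|\to 0$. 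Combined with $|F_n|\to\infty$, this gives $|F_n'|/|F_n|\to 2$ and $|G_n|/|F_n|\to 2$.

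Next I would decompose $\delta_{G_n}=\delta_{F_n}+\delta_{k+F_n}-\delta_{J_n}$ and expand
\[
\delta_{G_n}*\widetilde{\delta_{G_n}}\ =\ (2\delta_0+\delta_k+\delta_{-k})*(\delta_{F_n}*\widetilde{\delta_{F_n}})\ +\ R_n,
\]
using $\widetilde{\delta_{k+F_n}}=\delta_{-k}*\widetilde{\delta_{F_n}}$ and $\delta_{\pm k}*\delta_{\mp k}=\delta_0$. Here $R_n$ collects four cross-terms involving $\delta_{J_n}$; when tested against $f\in C_c(\RR)$, each is dominated by $\|f\|_\infty\cdot|J_n|$ times a constant depending on $\supp(f)$ and the local density of $\varLambda'$. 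Dividing by $|G_n|\sim 2|F_n|$ and invoking vague continuity of convolution by the compactly supported measure $\delta_0+(\delta_k+\delta_{-k})/2$, the main term converges vaguely to $\delta_0+(\delta_k+\delta_{-k})/2$, while $R_n/|G_n|\to 0$. A parallel boundary estimate (writing $\delta_{F_n'}=\delta_{G_n}+\varepsilon_n$ with $|\varepsilon_n|(\RR)=O(1)$ supported near the boundary of $A_n$) shows $\gamma^{}_{F^{}_n'}$ and $\gamma^{}_{G^{}_n}$ share the same vague limit, yielding the claimed $\gamma_{\operatorname{count}}$. The Fourier formula then follows from $\widehat{\delta_{\pm k}}(x)=e^{\mp 2\pi ikx}$ as densities against Lebesgue measure.

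The hard part will be the careful accounting of all error terms (the $\delta_{J_n}$-cross-terms in $R_n$ and the boundary discrepancy $\varepsilon_n$) through the nine-term convolution expansion, verifying that each vanishes vaguely after dividing by $|G_n|$ while working only with uniform discreteness, not FLC, of $\varLambda$, so that one cannot invoke Lemma~\ref{lem:gammacount} directly.
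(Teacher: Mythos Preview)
Your approach is correct and genuinely different from the paper's. The paper works pointwise: it invokes Lemma~\ref{lem:gammacount_ud} and computes, for each $t\in\RR$, the limit
\[
\eta_{\operatorname{count}}(t)=\lim_{n\to\infty}\frac{\card(\varGamma\cap(-t+\varGamma)\cap A_n)}{\card(\varGamma\cap A_n)}
\]
by writing $\varGamma\cap(-t+\varGamma)$ as a union of four translated intersections of $\varLambda$ with itself and applying inclusion--exclusion together with the $o(\card(\varLambda_n))$ bound coming from $\gamma_{\operatorname{count},\varLambda}=\delta_0$. You instead work at the level of measures: the identity $\delta_{G_n}=\delta_{F_n}+\delta_k*\delta_{F_n}-\delta_{J_n}$ lets you factor the main part of $\delta_{G_n}*\widetilde{\delta_{G_n}}$ as $(2\delta_0+\delta_k+\delta_{-k})*(\delta_{F_n}*\widetilde{\delta_{F_n}})$ and pass to the limit via vague continuity of convolution by a finite measure, while the $J_n$-terms (five of them, not four, but this is immaterial) and the boundary correction $\varepsilon_n$ are dispatched by the translation-boundedness of $\delta_{\varLambda'}$. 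The paper's route is more elementary set-counting, but its final appeal to Lemma~\ref{lem:gammacount_ud} to assemble the $\eta_{\operatorname{count}}(t)$ into $\gamma_{\operatorname{count}}$ tacitly uses the FLC hypothesis of that lemma, which the proposition does not assume; your measure-theoretic argument sidesteps this entirely and is therefore cleaner in the stated generality, exactly as you note in your final sentence.
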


\begin{proof}
Write $\varGamma= \varLambda \cup (k+\varLambda)$. By Lemma \ref{lem:gammacount_ud}, we have that $\gamma_{\operatorname{count}}=\sum_t \eta_{\operatorname{count}}(t)\delta_t$ where for $t\in \ZZ$,
\[
\eta_{\operatorname{count}}(t)=
\lim_{n\to \infty}\frac{\card(\varGamma\cap (-t+\varGamma)\cap A_n)}{\card(\varGamma\cap A_n)}.
\]
Let $\varLambda_n=\varLambda\cap A_n$ and similarly $\varGamma_n=\varGamma \cap A_n$, et cetera.  

The assumption that $\varLambda$ has counting autocorrelation $\delta_0$ means that whenever $s\in \RR$ is nonzero, $\card(\varLambda \cap (-s+\varLambda)\cap A_n)$ is $o(\card(\varLambda_n))$ as $n\to \infty$. As in the proof of Lemma \ref{lem:gammacount_ud}, because $\varLambda$ is uniformly discrete and $\card(\varLambda_n)\to \infty$, we have that $\card((-s+\varLambda)_n)\sim \card(-s+\varLambda_n)$ as $n\to\infty$, and similarly for $\varGamma$. Using inclusion-exclusion,
\begin{align*}
\card(\varGamma_n) &=
\card(\varLambda_n) + 
\card((k+\varLambda)_n)-
\card(\varLambda\cap (k+\varLambda)\cap A_n) \\ &\sim
\card(\varLambda_n)+\card(k+\varLambda_n)-
\card(\varLambda\cap (k+\varLambda)\cap A_n) \\ &=
2\card(\varLambda_n)-
\card(\varLambda\cap (k+\varLambda)\cap A_n), 
\end{align*}
and the last term is $o(\card(\varLambda_n))$, so $\card(\varGamma_n)\sim 2\card(\varLambda_n)$ as $n\to\infty$.

Now, let $t\in \RR$. The autocorrelation of $\varGamma$ considers the intersection
\begin{align*}
\varGamma\cap (-t+\varGamma) &=
(\varLambda \cap (-t+\varLambda))\;\cup\; (\varLambda \cap (k-t+\varLambda))\\ &\quad\cup\; 
((k+\varLambda)\cap (-t+\varLambda))\;\cup\; ((k+\varLambda)\cap (k-t+\varLambda)).
\end{align*}
Because $k \ne 0$, the autocorrelation of $\varLambda$ being $\delta_0$ implies that the intersection of any two or more sets in this union with $A_n$ has cardinality that is $o(\card(\varLambda_n))$ as $n\to \infty$. So, after using the inclusion-exclusion formula and discarding terms that go to zero, we find that
\begin{align*}
\eta_{\operatorname{count}}(t) &=
\lim_{n \to \infty}\frac{\card(\varGamma\cap (-t+\varGamma)\cap A_n)}{\card(\varGamma_n)} =
\lim_{n \to \infty}\frac{\card(\varGamma\cap (-t+\varGamma)\cap A_n)}{2\card(\varLambda_n)} \\ &=
\frac{1}{2}\lim_{n \to \infty}\left(
\frac{\card((\varLambda\cap (-t+\varLambda))_n)}{\card(\varLambda_n)}+\frac{\card((\varLambda\cap (k-t+\varLambda))_n)}{\card(\varLambda_n)}\right.\\ &\quad+\left.\frac{\card(((k+\varLambda)\cap (-t+\varLambda))_n)}{\card(\varLambda_n)} +\frac{\card(((k+\varLambda)\cap (k-t+\varLambda))_n)}{\card(\varLambda_n)}\right) \\ &=
\frac{1}{2}\left(\delta_0(t)+\delta_0(t-k)+\delta_0(t+k)+\delta_0(t)\right) \\ &=
\left(\delta_0+\frac{\delta_k+\delta_{-k}}{2}\right)(t).
\end{align*}
Therefore Lemma \ref{lem:gammacount_ud} implies that $\gamma_{\operatorname{count}}=\delta_0+(\delta_k+\delta_{-k})/2$ for $\varGamma$.
\end{proof}

We complete this subsection by showing that the addition of very few points does not change the counting autocorrelation.

It is well known that if two sets differ by a set of density zero, then the density autocorrelation of one exists if and only if the density autocorrelation of the other one exists. Moreover, in this case, the two autocorrelations are equal  {(see for example \cite[Corollary~6]{BHS} and its proof)}. We prove the corresponding result for counting autocorrelations.

\begin{lemma}\label{lemma:subset} Let $\varLambda \subseteq \varGamma  \subseteq G$ be uniformly discrete point sets and $\cA = (A_n)_n$ be a van Hove sequence with the following properties:
\begin{itemize}
\item[(a)] $\varLambda \cap A_n$ are eventually non-empty.
    \item[(b)] The autocorrelation $\gamma_{\operatorname{count}, \varLambda}$ exists with respect to $\cA$.
    \item[(c)] 
    \[
    \lim_{n \to \infty} \frac{\card(\varLambda \cap A_n)}{\card(\varGamma \cap A_n)} =1 \,.
    \]
\end{itemize}
Then, the autocorrelation $\gamma_{\operatorname{count}, \varGamma}$ exists with respect to $\cA$ and 
\[
\gamma_{\operatorname{count}, \varGamma}= \gamma_{\operatorname{count}, \varLambda} \,.
\]
\end{lemma}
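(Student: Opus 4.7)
The plan is to decompose $\varGamma = \varLambda \sqcup (\varGamma\setminus \varLambda)$ and show that the extra points $R_n := (\varGamma \setminus \varLambda) \cap A_n$ are too sparse to contribute to the autocorrelation in the limit. Setting $F_n = \varLambda \cap A_n$ and $E_n = \varGamma \cap A_n$, hypothesis (c) rewrites as
\[
\frac{\card(R_n)}{\card(E_n)} = 1 - \frac{\card(F_n)}{\card(E_n)} \xrightarrow{n\to\infty} 0,
\]
while (a) guarantees that $F_n$, and hence $E_n$, are eventually nonempty. Expanding $\delta_{E_n} = \delta_{F_n} + \delta_{R_n}$ bilinearly in the convolution and dividing by $\card(E_n)$ gives
\[
\gamma^{}_{E^{}_n} = \frac{\card(F_n)}{\card(E_n)}\,\gamma^{}_{F^{}_n} \;+\; \frac{1}{\card(E_n)}\bigl(\delta_{F_n}\ast\widetilde{\delta_{R_n}} + \delta_{R_n}\ast\widetilde{\delta_{F_n}} + \delta_{R_n}\ast\widetilde{\delta_{R_n}}\bigr).
\]
By hypotheses (b) and (c), the leading summand converges vaguely to $\gamma_{\operatorname{count}, \varLambda}$. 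It remains to show that each of the three bracketed error terms, after division by $\card(E_n)$, tends to $0$ vaguely.

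For this, I would fix a symmetric precompact open set $U\subseteq G$ and use uniform discreteness of $\varGamma$ to get $K := \|\delta_\varGamma\|_U < \infty$. Since $F_n,R_n \subseteq \varGamma$ and $U$ is symmetric, each of $\|\delta_{F_n}\|_U$, $\|\widetilde{\delta_{F_n}}\|_U$, $\|\delta_{R_n}\|_U$, $\|\widetilde{\delta_{R_n}}\|_U$ is at most $K$. A direct Fubini calculation gives the standard convolution bound $\|\mu \ast \nu\|_U \leq \|\mu\|_U\,|\nu|(G)$ whenever $\nu$ is a finite measure. Applying this with the $R_n$-factor always chosen as the finite factor (whose total mass equals $\card(R_n)$) yields the uniform estimate
\[
\bigl\|\delta_{F_n}\ast \widetilde{\delta_{R_n}}\bigr\|_U,\ \bigl\|\delta_{R_n}\ast \widetilde{\delta_{F_n}}\bigr\|_U,\ \bigl\|\delta_{R_n}\ast\widetilde{\delta_{R_n}}\bigr\|_U \ \leq\ K\cdot\card(R_n).
\]
Dividing by $\card(E_n)$ and invoking the limit above, all three normalized error terms tend to $0$ in $\|\cdot\|_U$. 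Since the support of any $f\in \Cc(G)$ is compact and hence covered by finitely many $U$-translates, $\|\nu_n\|_U \to 0$ forces $\nu_n(f)\to 0$; thus $\|\cdot\|_U$-convergence to $0$ implies vague convergence to $0$, and the proof concludes.

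The only bookkeeping subtlety I expect to watch is that $\card(R_n)$, not $\card(F_n)$, appear as the factor in each of the three error bounds; this forces one to always assign $R_n$ to the finite side in the convolution inequality (for $\delta_{R_n}\ast\widetilde{\delta_{F_n}}$ one takes $\mu = \widetilde{\delta_{F_n}}$ and $\nu = \delta_{R_n}$, using symmetry of $U$ to control $\|\widetilde{\delta_{F_n}}\|_U$). With this bookkeeping in place, no further estimates are needed, and the uniform discreteness of $\varGamma$ — inherited by both $F_n$ and $R_n$ — does all of the work.
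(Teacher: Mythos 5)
Your proof is correct and follows essentially the same route as the paper: both decompose $\delta_{E_n}$ as $\delta_{F_n}$ plus the Dirac comb of the extra points $R_n$, bound each cross term by a translation-boundedness constant for $\delta_\varGamma$ times the total mass $\card(R_n)$, and conclude from $\card(R_n)/\card(E_n)\to 0$. The only cosmetic difference is that the paper groups the cross terms into two telescoping summands and tests them directly against $f\in\Cc(G)$, whereas you keep three summands and estimate in the norm $\|\cdot\|_U$ before passing to vague convergence.
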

\begin{proof} Let $F_n :=\varLambda \cap A_n $ and $E_n:= \varGamma \cap A_n$. We will show that 
\[
\frac{1}{\card(E_n)} \delta_{E_n}\ast \widetilde{\delta_{E_n}} -
\frac{1}{\card(F_n)} \delta_{F_n}\ast \widetilde{\delta_{F_n}} \to 0 
\]
vaguely, from which the result follows.

First let us note that 
\[
\frac{1}{\card(E_n)} \delta_{F_n}\ast \widetilde{\delta_{F_n}}-\frac{1}{\card(F_n)} \delta_{F_n}\ast \widetilde{\delta_{F_n}} = \left(\frac{\card(F_n)-\card(E_n)}{\card(E_n)} \right) \frac{1}{\card(F_n)}\left( \delta_{F_n}\ast \widetilde{\delta_{F_n}}\right)
\]
converges to $0 \cdot \gamma_{\operatorname{count}, \varLambda} =0$. Therefore, we need to show that 
\[
\frac{1}{\card(E_n)} \delta_{E_n}\ast \widetilde{\delta_{E_n}} -
\frac{1}{\card(E_n)} \delta_{F_n}\ast \widetilde{\delta_{F_n}} \to 0.
\]

Let $f \in \Cc(G)$ be arbitrary. Then
\begin{align*}
&\left| \frac{1}{\card(E_n)} \delta_{E_n}\ast \widetilde{\delta_{E_n}} (f) -
\frac{1}{\card(E_n)} \delta_{F_n}\ast \widetilde{\delta_{F_n}} (f) \right|  \\
&\leq \left| \frac{1}{\card(E_n)} \delta_{E_n}\ast \widetilde{\delta_{E_n}} (f) -
\frac{1}{\card(E_n)} \delta_{F_n}\ast \widetilde{\delta_{E_n}} (f) \right|  \\
&+
\left| \frac{1}{\card(E_n)} \delta_{F_n}\ast \widetilde{\delta_{E_n}} (f) -
\frac{1}{\card(E_n)} \delta_{F_n}\ast \widetilde{\delta_{F_n}} (f) \right|  \\
&\leq  \frac{1}{\card(E_n)} \left|\left( \delta_{E_n} -\delta_{F_n} \right) \ast \widetilde{\delta_{E_n}} (f) \right| +
\frac{1}{\card(E_n)} \left|  \delta_{F_n}\ast \left( \widetilde{\delta_{E_n}-\delta_{F_n}}\right) (f) \right|  \\
&= \frac{1}{\card(E_n)} \left|\int_{G}  \int_{G} f(x+y) \dd \widetilde{\delta_{E_n}}(x) \dd \left( \delta_{E_n} -\delta_{F_n} \right)(y) \right| \\
&+ 
\frac{1}{\card(E_n)} \left|  \int_{G}  \int_{G} f(x+y) \dd \delta_{F_n}(x) \dd \left( \widetilde{ \delta_{E_n} -\delta_{F_n}} \right)(y)\right|  \\
&\leq \frac{1}{\card(E_n)} \int_{G}  \int_{G} \left|f(x+y)\right| \dd \widetilde{\delta_{E_n}}(x) \dd \left( \delta_{E_n\backslash F_n} \right)(y)  \\
&+ 
\frac{1}{\card(E_n)}   \int_{G}  \int_{G} \left|f(x+y)\right| \dd \delta_{F_n}(x) \dd \left( \widetilde{ \delta_{E_n \backslash F_n}} \right)(y)\\
&\leq \frac{1}{\card(E_n)} \int_{G}  \int_{G} \left|f(x+y)\right| \dd \widetilde{\delta_{\varGamma}}(x) \dd \left( \delta_{E_n\backslash F_n} \right)(y)  \\&+ 
\frac{1}{\card(E_n)}   \int_{G}  \int_{G} \left|f(x+y)\right| \dd \delta_{\varLambda}(x) \dd \left( \widetilde{ \delta_{E_n \backslash F_n}} \right)(y) \,.
\end{align*}
Now, since $f \in \Cc(G)$ and $\varGamma$ is uniformly discrete, all measures involved are translation bounded and so there exists a constant $C$ such that for all $y \in G$ we have 
\begin{align*} 
 \int_{G} \left|f(x+y)\right| \dd \delta_{\varLambda}(x) &  \leq C \quad\text{and} \\
  \int_{G} \left|f(x+y)\right| \dd \widetilde{\delta_{\varGamma}}(x) &\leq C. 
\end{align*}
It follows that 
\begin{align*}
 &\left| \frac{1}{\card(E_n)} \delta_{E_n}\ast \widetilde{\delta_{E_n}} (f) -
\frac{1}{\card(E_n)} \delta_{F_n}\ast \widetilde{\delta_{F_n}} (f) \right|  \\
&\leq \frac{1}{\card(E_n)} \int_{G}  C\: \dd \left( \delta_{E_n\backslash F_n} \right)(y)  + 
\frac{1}{\card(E_n)}   \int_{G}  C\: \dd \left( \widetilde{ \delta_{E_n \backslash F_n}} \right)(y) \\
&=\frac{C}{\card(E_n)} 2 \card(E_n \backslash F_n) \to 0 \,,
\end{align*}
completing the proof.

\end{proof}

\subsection{Diffraction of subsets of integers}

As mentioned before, many examples we will consider below are subsets $\varLambda$ of the integers, considered as point sets in $G=\RR$. The averaging van Hove sequences $(A_n)_n$ will be of the form 
$
A_n=[a_n,b_n] \,,
$
where $\displaystyle \lim_{n \to \infty} (b_n-a_n) =\infty$. 

\smallskip

Let us now fix $\varLambda  \subseteq \ZZ$ and $(A_n)_n$ a van Hove sequence in $\RR$.
For each $n$, let $f_n, g_n :\ZZ \to \CC$ be defined as
\begin{align*}
f_n(d)&:= \frac{1}{\vol(A_n)} \card (  \varLambda \cap (d+\varLambda) \cap A_n) \\
g_n(d)&:= \frac{ \card (  \varLambda \cap (d+\varLambda) \cap A_n)}{ \card (  \varLambda\cap A_n)} = \frac{f_n(d)}{f_n(0)} \,.
\end{align*}
We then have {(compare \cite[Sec.~10.3.2]{TAO} or \cite[Thm.~1]{Ba})}:

\begin{proposition} Let $\varLambda \subseteq \ZZ$ and let $(A_n)_n$ be a van Hove sequence in $\RR$. 
Then, $\gamma_{\operatorname{dens}}$ exists with respect to $(A_n)_n$ if and only if $f_n$ converges pointwise to a function $f : \ZZ \to \CC$.

Moreover, in this case $\gamma_{\operatorname{dens}}= \sum_{m \in \ZZ} f(m) \delta_m$ and $f$ is positive definite. Finally, if $\sigma$ is the positive measure on $\RR/\ZZ$ corresponding to $f$ via Bochner's theorem, then $\sigma$ and the 1-periodic measure $\widehat{\gamma_{\operatorname{dens}}}$  are related via the so-called Weil formula: 
\[
\int_{\RR} f(x) \mathrm d \widehat{\gamma_{\operatorname{dens}}}(x) = \int_{\RR/\ZZ} \mathcal{W}f(x+\ZZ) \mathrm d \sigma(x) \qquad  \forall f \in \Cc(G) \,,
\]
where 
\[
\mathcal{W}f(x+\ZZ) := \sum_{y \in \ZZ} f(x+y)  \qquad  \forall x \in \RR \,.
\]
\end{proposition}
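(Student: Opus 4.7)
Since $\varLambda\subseteq\ZZ$, the difference set $\varLambda-\varLambda$ is locally finite, so $\varLambda$ has FLC and Lemma~\ref{lem:gammadens} applies. The plan is to first identify the functions $f_n$ with the counts that appear in Lemma~\ref{lem:gammadens}, up to vanishing van Hove errors; then read off the Dirac-comb form and positive definiteness of $f$; and finally identify the Bochner measure of $f$ with the quotient of the $1$-periodic measure $\widehat{\gamma_{\operatorname{dens}}}$ on $\RR/\ZZ$ via the classical Weil formula.

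\textbf{Equivalence and Dirac-comb form.} Note that $\varLambda\cap(m+\varLambda)=m+(\varLambda\cap(\varLambda-m))$, and that $\delta_\varLambda$ is translation bounded. The van Hove property of $\cA$ then gives, for every $m\in\ZZ$,
\[
\Bigl| f_n(m) - \frac{\card(F_n\cap(F_n+m))}{\vol(A_n)} \Bigr| \;\longrightarrow\; 0,
\]
exactly as in the proof of Lemma~\ref{lem:gammadens}, and both quantities vanish identically when $m\notin\varLambda-\varLambda$. Hence $\gamma_{\operatorname{dens}}$ exists if and only if $f_n(m)$ converges pointwise on $\ZZ$, and in that case Lemma~\ref{lem:gammadens} together with the identity $\eta_{\operatorname{dens}}(m)=f(m)$, itself a consequence of the same translation invariance, gives
\[
\gamma_{\operatorname{dens}} \;=\; \sum_{m\in\ZZ} f(m)\,\delta_m.
\]

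\textbf{Positive definiteness and Bochner.} Each $\gamma_n=\frac{1}{\vol(A_n)}\delta_{F_n}\ast\widetilde{\delta_{F_n}}$ is positive definite by construction, and positive definiteness is preserved under vague limits, so $\gamma_{\operatorname{dens}}$ is a positive definite measure. Because $\supp(\gamma_{\operatorname{dens}})\subseteq\ZZ$, testing positive definiteness against $\varphi\ast\widetilde{\varphi}$ with $\varphi=\sum_k a_k\psi_k$, where $\psi_k(x)=\psi(x-n_k)$ and $\psi\in\Cc(\RR)$ is a nonzero bump supported in $(-\tfrac{1}{2},\tfrac{1}{2})$, isolates a single integer difference in each cross term and yields
\[
(\psi\ast\widetilde{\psi})(0)\,\sum_{k,\ell} a_k\,\overline{a_\ell}\, f(n_k-n_\ell) \;\ge\; 0.
\]
Since $(\psi\ast\widetilde{\psi})(0)>0$, this shows $f$ is positive definite on $\ZZ$, and Bochner's theorem supplies a unique positive measure $\sigma$ on $\RR/\ZZ$ with $f(m)=\int_{\RR/\ZZ} e^{2\pi imx}\,d\sigma(x+\ZZ)$.

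\textbf{Weil formula and main obstacle.} The inclusion $\supp(\gamma_{\operatorname{dens}})\subseteq\ZZ$ gives $\gamma_{\operatorname{dens}}\ast\delta_n=\gamma_{\operatorname{dens}}$ for every $n\in\ZZ$; Fourier-transforming and using $\widehat{\delta_n}(y)=e^{-2\pi iny}$ forces $\widehat{\gamma_{\operatorname{dens}}}$ to be $1$-periodic, hence the pullback along $\RR\to\RR/\ZZ$ of a unique positive Radon measure $\sigma_0$ on $\RR/\ZZ$, and the classical Weil integration formula for such pullbacks delivers the stated identity with $\sigma_0$ in place of $\sigma$. The remaining obstacle, and the heart of the proof, is the identification $\sigma_0=\sigma$. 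I would recover $f(m)$ as a Fourier coefficient in two ways: Fourier inversion for positive definite measures (cf.\ \cite[Thm.~4.10.10]{MoSt}), combined with $\supp(\gamma_{\operatorname{dens}})\subseteq\ZZ$, yields $\gamma_{\operatorname{dens}}(\{m\})=\int_{\RR/\ZZ} e^{2\pi imx}\,d\sigma_0(x+\ZZ)$, while Bochner gives $f(m)=\int_{\RR/\ZZ} e^{2\pi imx}\,d\sigma(x+\ZZ)$. Since $\gamma_{\operatorname{dens}}(\{m\})=f(m)$ and the characters $\{x\mapsto e^{2\pi imx}\}_{m\in\ZZ}$ separate positive measures on $\RR/\ZZ$, this forces $\sigma_0=\sigma$ and completes the proof.
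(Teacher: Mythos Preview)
Your reduction of the first part to Lemma~\ref{lem:gammadens} matches the paper's proof exactly (the paper simply says ``follows trivially from Proposition~\ref{lem:gammadens}''), and your explicit verification that $f$ is positive definite on $\ZZ$ is correct and more detailed than the paper, which leaves this implicit.

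There is, however, a genuine error in your Weil-formula paragraph. The assertion ``$\supp(\gamma_{\operatorname{dens}})\subseteq\ZZ$ gives $\gamma_{\operatorname{dens}}\ast\delta_n=\gamma_{\operatorname{dens}}$'' is false: convolution with $\delta_n$ is translation by $n$, so this would say that $\gamma_{\operatorname{dens}}$ itself is $\ZZ$-periodic, i.e.\ that $f$ is constant --- which fails already for $\varLambda=2\ZZ$. Worse, even if one granted that identity, Fourier-transforming it would yield $e^{-2\pi i n y}\,\widehat{\gamma_{\operatorname{dens}}}=\widehat{\gamma_{\operatorname{dens}}}$, which forces $\supp(\widehat{\gamma_{\operatorname{dens}}})\subseteq\ZZ$, not $1$-periodicity; so the stated conclusion does not follow from the stated premise either. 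The argument you need is the dual one: because $\gamma_{\operatorname{dens}}$ is supported on $\ZZ$, \emph{multiplication} by the character $\chi_n(x)=e^{2\pi i n x}$ fixes it (since $\chi_n\equiv 1$ on $\ZZ$), and Fourier-transforming $\chi_n\cdot\gamma_{\operatorname{dens}}=\gamma_{\operatorname{dens}}$ gives that the translate of $\widehat{\gamma_{\operatorname{dens}}}$ by $n$ equals $\widehat{\gamma_{\operatorname{dens}}}$, which is the desired $1$-periodicity. With this correction in place, your subsequent identification $\sigma_0=\sigma$ by matching Fourier coefficients on $\RR/\ZZ$ is sound. For comparison, the paper does not argue this step at all and simply cites \cite{RS} for the Weil-formula relationship.
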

\begin{proof}
The first part follows trivially from Proposition~\ref{lem:gammadens}. The last part follows from \cite{RS}.
\end{proof}

Let us note here in passing that intuitively, the Weil formula says that $\widehat{\gamma_{\operatorname{dens}}}$ is the $1$-periodic measure obtained by ``periodizing" $\sigma$. More precisely, $\sigma$ is the measure obtained by restricting $\widehat{\gamma_{\operatorname{dens}}}$ to $[0,1)$ and identifying this interval with $\RR/\ZZ$.

\medskip

Now, exactly the same result holds for the counting autocorrelation. Since the proof is similar, we skip it.

\begin{proposition} Let $\varLambda \subseteq \ZZ$ and let $(A_n)_n$ be a van Hove sequence in $\RR$. 
Then, $\gamma_{\operatorname{count}}$ exists with respect to $(A_n)_n$ if and only if $g_n$ converges pointwise to a function $g : \ZZ \to \CC$.

Moreover, in this case $\gamma_{\operatorname{count}}= \sum_{m \in \ZZ} g(m) \delta_m$ and $g$ is positive definite. Finally, if $\sigma$ is the positive measure on $\RR/\ZZ$ corresponding to $g$ via Bochner's theorem, then $\sigma$ and the 1-periodic measure $\widehat{\gamma_{\operatorname{count}}}$  are related via the Weil formula.\qed
\end{proposition}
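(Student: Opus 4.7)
The plan is to mirror the proof of the preceding (density) proposition, substituting Lemma~\ref{lem:gammacount} for Lemma~\ref{lem:gammadens} and using Lemma~\ref{lem:gammacount_ud} to bridge the two incarnations of $\eta_{\operatorname{count}}$. Since $\varLambda-\varLambda \subseteq \ZZ$ is locally finite, $\varLambda$ automatically has FLC, so Lemma~\ref{lem:gammacount} applies: $\gamma_{\operatorname{count}}$ exists if and only if $\eta_{\operatorname{count}}(t) = \lim_n \card(F_n \cap (-t+F_n))/\card(F_n)$ exists for every $t \in \varLambda - \varLambda$, in which case $\gamma_{\operatorname{count}} = \sum_{t \in \varLambda - \varLambda} \eta_{\operatorname{count}}(t)\,\delta_t$.

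Assuming the main regime $\card(F_n) \to \infty$, Lemma~\ref{lem:gammacount_ud} identifies
\[
\eta_{\operatorname{count}}(t) = \lim_n \frac{\card(\varLambda \cap (-t+\varLambda)\cap A_n)}{\card(F_n)} = \lim_n g_n(-t)
\]
whenever either side exists. Since $g_n(d)=0$ for $d \notin \varLambda-\varLambda$, and $\varLambda-\varLambda$ is symmetric, pointwise convergence of $g_n$ on all of $\ZZ$ is equivalent to existence of $\eta_{\operatorname{count}}(t)$ for every $t \in \varLambda-\varLambda$. Reindexing $m = -t$ and noting that $g$ is even (a consequence of the self-adjointness $\widetilde{\gamma_{\operatorname{count}}} = \gamma_{\operatorname{count}}$), we obtain the Dirac-comb identity $\gamma_{\operatorname{count}} = \sum_{m \in \ZZ} g(m)\,\delta_m$.

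Positive-definiteness of $g$ admits two equivalent routes: directly, each $g_n$ is proportional to the autocorrelation of $\mathbf{1}_{F_n}$ viewed as a function on $\ZZ$ and hence is positive-definite, and positive-definiteness is preserved under pointwise limits; indirectly, one restricts the positive-definite measure $\gamma_{\operatorname{count}}$, which is supported in $\ZZ$, to a function on $\ZZ$. Bochner's theorem then yields the measure $\sigma$ on $\RR/\ZZ$, and the Weil formula tying $\sigma$ to the $1$-periodic measure $\widehat{\gamma_{\operatorname{count}}}$ applies verbatim from \cite{RS}, exactly as in the density case, because $\supp(\gamma_{\operatorname{count}}) \subseteq \ZZ$ forces $\widehat{\gamma_{\operatorname{count}}}$ to be $1$-periodic. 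The only genuine obstacle I anticipate is the degenerate boundary case where $\card(F_n)$ remains bounded while $F_n$ is eventually non-empty, which falls outside the hypothesis of Lemma~\ref{lem:gammacount_ud}; this is handled by a direct verification from the definitions, since in that case $\varLambda$ is eventually captured inside $A_n$ and the measures $\gamma^{}_{F^{}_n}$ stabilize to the autocorrelation of a finite set.
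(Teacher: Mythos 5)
Your strategy is the one the paper intends --- the paper omits this proof with the remark that it is ``similar'' to the density case, and reducing to Lemma~\ref{lem:gammacount} and then bridging $\card(F_n\cap(-t+F_n))$ with $\card(\varLambda\cap(-t+\varLambda)\cap A_n)$ is the natural reconstruction. The genuine gap is in that bridge: Lemma~\ref{lem:gammacount_ud} is stated and proved only for van Hove sequences of \emph{intervals}, while this proposition allows an arbitrary van Hove sequence in $\RR$. In the density proposition the analogous identification costs $\delta_{\varLambda}(\partial^{\{t,-t\}}(A_n))$, which is $o(\vol(A_n))$ for every van Hove sequence; here you need that quantity to be $o(\card(F_n))$, and for zero-density $\varLambda$ this does not follow from the van Hove property. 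It genuinely fails: take $\varLambda=\{4^k\}\cup\{4^k+1\}$ and $A_n=[0,n^2]$ together with $n$ unit intervals centred at points $4^k+1$ with $4^k>n^2$. This is a van Hove sequence along which $g_n(1)\to 1$ and $g_n(-1)\to 0$ (so the limit $g$ is not even symmetric, let alone positive definite), while $\card(F_n\cap(\mp 1+F_n))/\card(F_n)\to 0$, so $\gamma_{\operatorname{count}}=\delta_0\neq\sum_m g(m)\delta_m$. Hence you cannot cite Lemma~\ref{lem:gammacount_ud} outside its hypotheses: you must either restrict to intervals, as the neighbouring lemmas do, or add a hypothesis forcing $\delta_{\varLambda}(\partial^{\{t,-t\}}(A_n))=o(\card(F_n))$. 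This is the crux of the statement, not a routine verification.

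Two smaller problems. First, $g_n$ is a cross-correlation of $1_{F_n}$ against $1_{\varLambda}$, not the autocorrelation of $1_{F_n}$, so it need not be positive definite for finite $n$; your ``direct'' route should instead use the genuinely positive definite functions $d\mapsto\card(F_n\cap(d+F_n))/\card(F_n)$, or you should rely solely on your ``indirect'' route through the positive definite measure $\gamma_{\operatorname{count}}$. Second, your treatment of the degenerate case is incorrect as written: boundedness of $\card(F_n)$ does not force $\varLambda$ to be eventually contained in $A_n$ (that would require $(A_n)_n$ to be exhausting), and neither $F_n$ nor $\gamma^{}_{F^{}_n}$ need stabilise in that regime, so this case needs a genuine argument or an explicit hypothesis such as $\card(F_n)\to\infty$, which is exactly what Lemma~\ref{lem:gammacount_ud} assumes.
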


\section{Diffraction of sets with no infinite translated subsets}\label{sect:boring}

As we saw in the last section, the autocorrelation encodes how often an element $t \in G$ appears as the difference of two points of $\varLambda$. The easiest examples to study, then, are those sets with periodic subsets or those where $\varLambda$ and $t+\varLambda$ have little agreement. In this section we look at the former situation.

\begin{lemma}\label{lem:finitetranslates}
Let $\varLambda \subseteq G$ be a set of FLC and let $\cA=(A_n)_n$ be a van Hove sequence.
Assume that for $F_n = \varLambda \cap A_n$
\begin{itemize}
    \item[(a)] $\displaystyle \lim_{n \to \infty} \card(F_n)=\infty$.
    \item[(b)] For all $0 \neq t \in \varLambda -\varLambda$ we have 
    \[
    \card(\varLambda \cap (-t+\varLambda)) < \infty \,.
    \]
\end{itemize}
Then, the counting autocorrelation is $\gamma_{\operatorname{count}}=\delta_0$ and it exists with respect to $\cA$.
\end{lemma}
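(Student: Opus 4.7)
The plan is to invoke Lemma~\ref{lem:gammacount} directly. Since assumption (a) guarantees that $\card(F_n) \to \infty$, in particular $F_n = \varLambda \cap A_n$ is eventually non-empty, so Lemma~\ref{lem:gammacount} applies. By that lemma it suffices to show that for every $t \in \varLambda - \varLambda$, the limit
\[
\eta_{\operatorname{count}}(t) = \lim_{n \to \infty} \frac{\card(F_n \cap (-t + F_n))}{\card(F_n)}
\]
exists and equals $1$ if $t = 0$ and $0$ otherwise. Once this is established, Lemma~\ref{lem:gammacount} will give
\[
\gamma_{\operatorname{count}} = \sum_{t \in \varLambda - \varLambda} \eta_{\operatorname{count}}(t)\, \delta_t = \delta_0 \,.
\]

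For $t = 0$, the intersection $F_n \cap F_n = F_n$ is non-empty for large $n$, so the ratio is identically $1$ and the limit is trivially $1$.

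For $0 \neq t \in \varLambda - \varLambda$, I would use assumption (b) together with the inclusion
\[
F_n \cap (-t + F_n) \;\subseteq\; \varLambda \cap (-t + \varLambda) \,.
\]
By (b), the right-hand side is a finite set, so the numerator $\card(F_n \cap (-t + F_n))$ is bounded above by the constant $C_t := \card(\varLambda \cap (-t + \varLambda)) < \infty$, independent of $n$. Combined with (a), which gives $\card(F_n) \to \infty$, we obtain
\[
0 \leq \frac{\card(F_n \cap (-t + F_n))}{\card(F_n)} \leq \frac{C_t}{\card(F_n)} \longrightarrow 0 \,,
\]
so $\eta_{\operatorname{count}}(t) = 0$, as required.

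There is no real obstacle here: the argument reduces, via the FLC hypothesis and Lemma~\ref{lem:gammacount}, to a one-line size estimate using (a) and (b). The only subtle point worth verifying is that we are genuinely allowed to apply Lemma~\ref{lem:gammacount}, which requires $(F_n)_n$ to be eventually nonempty; this is ensured by $\card(F_n) \to \infty$ from (a).
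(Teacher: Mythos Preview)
Your proof is correct and follows essentially the same approach as the paper: both reduce the claim via Lemma~\ref{lem:gammacount} to computing each $\eta_{\operatorname{count}}(t)$, handle $t=0$ trivially, and for $t\neq 0$ bound $\card(F_n\cap(-t+F_n))$ by the constant $\card(\varLambda\cap(-t+\varLambda))$ using the inclusion $F_n\cap(-t+F_n)\subseteq \varLambda\cap(-t+\varLambda)$, then divide by $\card(F_n)\to\infty$.
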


\begin{proof}
For any nonzere $0\ne t\in \varLambda-\varLambda$, we can bound
\[
\frac{\card(F_n\cap (-t+F_n))}{\card(F_n)} \le
\frac{\card(\varLambda \cap (-t+\varLambda))}{\card(F_n)},
\]
where assumptions (a) and (b) imply that the right-hand side tends to zero. So, for nonzero $t$, in the notation of Lemma \ref{lem:gammacount}, we have
\[
\eta_{\operatorname{count}}(t) = \lim_{n \to \infty} \frac{\card(F_n\cap (-t+F_n))}{\card(F_n)} =0.
\]
For $t=0$, $\eta_{\operatorname{count}}(0)=1$, because $F_n$ is eventually nonempty. So, each limit $\eta_{\operatorname{count}}(t)$ exists, and so by Lemma \ref{lem:gammacount}, the autocorrelation $\gamma_{\operatorname{count}}$ exists with respect to $\cA$ and it is
\[
\gamma_{\operatorname{count}}=
\sum_{t\in \varLambda-\varLambda} \eta_{\operatorname{count}}(t)\delta_t =
\delta_0.\qedhere
\]
\end{proof}



\begin{remark} \
\begin{itemize}
    \item[(a)]
Let $\varLambda \subseteq G$ be locally finite and let $K \subseteq G$ be compact. A short computation shows that 
\begin{align*}
 \quad \quad \ \ \ \left|\gamma^{}_{F^{}_n}-\delta_0 \right|(K) &=  \left|\gamma^{}_{F^{}_n}-\delta_0 \right|(\{0\}) +  \left|\gamma^{}_{F^{}_n}\right|(K \backslash \{0 \})\\
&= \left| \gamma^{}_{F^{}_n}(\{0\})-1 \right| +\frac{1}{\card(F_n)} \sum_{x,y \in F_n} \delta_{x-y}(K \backslash \{0 \}) \\
&=\left| \frac{1}{\card(F_n)} \delta_{F_n}\ast\widetilde{\delta_{F_n}}(\{0\}) -1 \right|
+\frac{1}{\card(F_n)}\sum_{\substack{t \in (F_n -F_n) \cap K \\ t \neq 0}} \card(F_n \cap (-t+F_n))
\\
&\leq  \left| \frac{1}{\card(F_n)} \sum_{x,y \in F_n} \delta_{x-y} (\{0\}) \right| +  \frac{1}{\card(F_n)}   \sum_{\substack{t \in (\varLambda -\varLambda) \cap K \\ t \neq 0}} \card(\varLambda \cap (-t+\varLambda)).
\end{align*}
Now, if $\varLambda$ has FLC, then the set $(\varLambda -\varLambda) \cap K$ is finite. Therefore, if the assumptions in Lemma~\ref{lem:finitetranslates} hold, then we get the stronger conclusion that 
\[
\left| \gamma^{}_{F^{}_n} -\delta_0 \right|(K) \to 0
\]
for all compact sets $K$.
\item[(b)] The conclusion of Lemma~\ref{lem:finitetranslates} does not hold for sets without FLC. For example, 
\[
\varLambda = \bigcup_n \left\{ n!+\frac{1}{e^n}, \ldots, n!+  \frac{\lfloor e^n \rfloor}{e^n} \right\}
\]
has density zero, meets any of its translates in at most one point, but one can show that
\[
\gamma_{\operatorname{count}}= \lambda|_{[-1,1]} \,.
\]
\end{itemize}
\end{remark}



As an immediate consequence of Lemma~\ref{lem:finitetranslates} we get:

\begin{lemma} Let $\varLambda =\{ a_n : n\in \NN \} \subseteq \RR$ for any sequence $(a_n)_n$ satisfying 
\[
\lim_{n \to \infty} \;(a_{n+1}-a_n) =\infty
\]
Then, $\varLambda$ has FLC, and for any van Hove sequence $(A_n)_n$ such that $\displaystyle \lim_{n \to \infty} (A_n \cap \varLambda)= \infty$ we have  
\begin{equation}\label{eq23}
\gamma_{\operatorname{count}}=\delta_0  \, ,\quad  \reallywidehat{\gamma_{\operatorname{count}}}=\lambda \,.
\end{equation}
In particular, \eqref{eq23} holds for all van Hove sequences which are nested and exhausting.
\end{lemma}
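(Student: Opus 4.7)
The plan is to verify the hypotheses of Lemma~\ref{lem:finitetranslates} and then appeal to it. The growth assumption $a_{n+1}-a_n\to\infty$ will be used repeatedly through the elementary observation that, for $m>n$,
\[
a_m-a_n=\sum_{k=n}^{m-1}(a_{k+1}-a_k),
\]
so for any fixed $R>0$ one can choose $N$ large enough that $a_{k+1}-a_k\ge R$ whenever $k\ge N$, and hence $a_m-a_n\ge R$ as soon as $n\ge N$ (and $m>n$). In particular this forces $(a_n)_n$ to be eventually strictly increasing, so $\varLambda$ is infinite.

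First I would establish FLC. Fix a compact $K\subseteq\RR$ contained in $[-R,R]$. The display above shows that a difference $a_m-a_n\in K$ with $m>n$ forces $n<N$ (once $n\ge N$, already the single gap $a_{n+1}-a_n$ exceeds $R$), and for each such finitely many $n$ the same argument applied on the other index shows that only finitely many $m$ can qualify. Hence $(\varLambda-\varLambda)\cap K$ is finite, so $\varLambda-\varLambda$ is locally finite and $\varLambda$ has FLC.

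Next I would verify condition (b) of Lemma~\ref{lem:finitetranslates}. Let $0\ne t\in\varLambda-\varLambda$, say $t>0$ (the case $t<0$ is symmetric). A pair $(m,n)$ with $a_m-a_n=t$ must satisfy $m>n$, hence $a_m-a_n\ge a_{n+1}-a_n$, which forces $n$ to be bounded. Since $\varLambda$ is a set there is at most one $m$ for each $n$, so $\varLambda\cap(-t+\varLambda)$ is finite. Condition (a) is just the hypothesis $\card(\varLambda\cap A_n)\to\infty$. Lemma~\ref{lem:finitetranslates} therefore yields $\gamma_{\operatorname{count}}=\delta_0$, and the standard identity $\widehat{\delta_0}=\lambda$ (Haar measure on $\widehat{\RR}\cong\RR$) gives $\reallywidehat{\gamma_{\operatorname{count}}}=\lambda$.

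Finally, for the ``in particular'' clause, if $(A_n)_n$ is nested and exhausting then $\varLambda\cap A_n$ increases to $\varLambda$, which is infinite by the remark above; hence $\card(\varLambda\cap A_n)\to\infty$ and the previous part applies. No step is particularly delicate; the only subtlety is that the gap hypothesis is only asymptotic, so the bounding arguments must set aside a finite initial segment of $(a_n)_n$ before applying the growth estimate.
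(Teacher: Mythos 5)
Your proposal is correct and follows essentially the same route as the paper: show $\varLambda$ has FLC and that $\varLambda\cap(t+\varLambda)$ is finite for every $t\ne 0$, then invoke Lemma~\ref{lem:finitetranslates}, handling the nested-and-exhausting case by noting $F_n$ increases to the infinite set $\varLambda$. Your version just spells out the telescoping-gap estimates that the paper leaves implicit (with only a harmless imprecision in asserting $m>n$ before discarding the finite initial segment where the sequence may fail to be monotone).
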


\begin{proof}
First note that for each $R>0$ there exists some $N$ such that, for all $n >N$ we have 
\[
\varLambda \cap [a_n-R, a_n+R] = \{ a_n \} \,.
\]
It follows immediatelly that $\varLambda$ has FLC.

Note that the given condition implies that for all $t \neq 0$ the set
\[
\varLambda \cap (t+\varLambda)  
\]
is finite. Therefore, for any van Hove sequence for which $\displaystyle \lim_{n \to \infty} \card(F_n)=\infty$, we get $\gamma_{\operatorname{count}}=\delta_0$ by Lemma \ref{lem:finitetranslates}.

Finally, if $(A_n)_n$ is exhausting, $\bigcup_n A_n=\RR$ and so $\bigcup_n F_n=\varLambda$. If $(A_n)_n$ is nested, so is the sequence $(F_n)_n$, and so since $\varLambda$ is infinite, we must have $\displaystyle \lim_{n \to \infty} \card(F_n)=\infty$.
\end{proof}

The following two-sided version is proven in exactly the same way, and we skip the details. 

\begin{lemma} Let $\varLambda =\{ a_n : n\in \ZZ \} \subseteq \RR$ so that 
\begin{align*}
\lim_{n \to \infty} a_{n+1}-a_n &=\infty \quad\text{and} \\
\lim_{n \to -\infty} a_{n-1}-a_{n} &=-\infty.
\end{align*}
Then, $\varLambda$ has FLC, and for any van Hove sequence $(A_n)_n$ such that $\displaystyle \lim_{n \to \infty} (A_n \cap \varLambda)= \infty$ we have  
\begin{equation}\label{eq24}
\gamma_{\operatorname{count}}=\delta_0  \, ,\quad  \reallywidehat{\gamma_{\operatorname{count}}}=\lambda \,.
\end{equation}
In particular, \eqref{eq24} holds for all van Hove sequences which are nested and exhausting.
\end{lemma}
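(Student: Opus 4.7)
The plan is to reduce this to Lemma~\ref{lem:finitetranslates} in essentially the same way as the one-sided version, with only mild modifications to handle the two-tailed indexing. First, I would verify that $\varLambda$ has FLC. Given $R>0$, the two gap conditions provide some $N$ such that $a_{n+1}-a_n>R$ whenever $n\ge N$ and $a_n-a_{n-1}>R$ whenever $n\le -N$. Hence for any pair $m>n$ with both $|m|,|n|\ge N$ and $m-n\ge 1$, telescoping yields $a_m-a_n\ge (m-n)R>R$. Therefore
\[
(\varLambda-\varLambda)\cap[-R,R]\subseteq \{a_m-a_n\;\mid\; |m|,|n|\le N\},
\]
which is finite. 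As $R>0$ was arbitrary, $\varLambda-\varLambda$ is locally finite, i.e.\ $\varLambda$ has FLC.

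Next, I would check the hypothesis (b) of Lemma~\ref{lem:finitetranslates}. Fix a nonzero $t\in\varLambda-\varLambda$ and set $R=|t|$. If $a_m=t+a_n$ with $m,n\in\ZZ$, then $|a_m-a_n|\le R$, and by exactly the argument in the previous paragraph this forces $|m|\le N$ or $|n|\le N$ (with $N$ chosen as above for this $R$). So there are only finitely many such index pairs, and hence $\varLambda\cap(t+\varLambda)$ is finite.

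Now let $(A_n)_n$ be any van Hove sequence in $\RR$ with $\displaystyle \lim_{n\to\infty}\card(A_n\cap\varLambda)=\infty$, and set $F_n=\varLambda\cap A_n$. Hypotheses (a) and (b) of Lemma~\ref{lem:finitetranslates} are satisfied, so the lemma gives that $\gamma_{\operatorname{count}}$ exists with respect to $(A_n)_n$ and equals $\delta_0$. Taking Fourier transforms yields $\widehat{\gamma_{\operatorname{count}}}=\widehat{\delta_0}=\lambda$, establishing \eqref{eq24}.

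Finally, for a nested exhausting van Hove sequence $(A_n)_n$, the sets $F_n=\varLambda\cap A_n$ are nested with $\bigcup_n F_n=\varLambda$. Since $\varLambda$ is indexed by $\ZZ$ and hence infinite, $\card(F_n)\to\infty$, so the previous paragraph applies. The proof is essentially identical to the one-sided case; no step presents a genuine obstacle, and the only adjustment is that one now uses both gap conditions to handle indices going to $\pm\infty$ when bounding $(\varLambda-\varLambda)\cap[-R,R]$.
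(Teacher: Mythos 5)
Your proposal is correct and follows the same route as the paper, which simply observes that the two-sided version is proven exactly like the one-sided one: establish FLC and the finiteness of $\varLambda\cap(t+\varLambda)$ for $t\neq 0$, then invoke Lemma~\ref{lem:finitetranslates}, with the nested-exhausting case handled by noting $\card(F_n)\to\infty$. (One small quibble: the telescoping bound $a_m-a_n\ge (m-n)R$ can fail for index pairs straddling $0$, where the gaps need not yet exceed $R$; but the containment you actually need follows from the single-gap estimates $a_m-a_n\ge a_m-a_{m-1}>R$ for $m>N$ and $a_m-a_n\ge a_{n+1}-a_n>R$ for $n<-N$, so the argument is immediately repaired.)
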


The next three examples follow immediately from these results. 
\begin{example} Choose any $a >1$, let 
\[
\varLambda = \{a^n : n \in \NN \},
\]
and let $(A_n)_n=([-n,n])_n$. Then $\gamma_{\operatorname{count}}=\delta_0$ and $\widehat{\gamma_{\operatorname{count}}}=\lambda$. \qed 
\end{example}

\begin{example} Let 
\[
\varLambda = \{n! : n \in \NN \}
\]
and let $(A_n)_n=([-n,n])_n$. Then $\gamma_{\operatorname{count}}=\delta_0$ and $\widehat{\gamma_{\operatorname{count}}}=\lambda$. \qed 
\end{example}

\begin{example} Let 
\[
\varLambda = \{f_n : n \in \NN \}
\]
where $(f_n)_n$ is the Fibonacci sequence $f_0=0, f_1=1$ and $f_{n+1}=f_n+f_{n-1}$. Let $(A_n)_n=([-n,n])_n$.
Then $\gamma_{\operatorname{count}}=\delta_0$ and $\widehat{\gamma_{\operatorname{count}}}=\lambda$. \qed
\end{example}

More generally, the same behaviour is exhibited by many point sets arising from linear recurrence relations, in particular when the largest root in modulus of the characteristic polynomial is strictly greater than 1. The point being that all of these have approximately exponential growth.

It should be noted that these examples can be put into Proposition~\ref{prop:no_autocorrelation_shift} to get a point set with diffraction and autocorrelation 
$$\gamma_{\operatorname{count}} =\delta_0 + \frac{\delta_{k}+\delta_{-k}}{2} \quad  \textrm{and} \quad 
\widehat{\gamma_{\operatorname{count}}}=(1+\cos(2\pi k x)) \lambda\, . $$

\section{Diffraction of the primes}\label{sect:primes}

In this section, we discuss the diffraction of primes $\PP \subseteq \ZZ$ (including negative primes) for van Hove sequences of intervals. We will see that the density autocorrelation is always $0$, while the counting autocorrelation is, under weak conditions, $\delta_0$.

Throughout this entire section, for $d \in \NN$, let us recall the notation \begin{align*}
\pi_d(x)= \card \{ p \in \PP \cap [0,x] : p+d \in \PP \} \,.
\end{align*}
Of course, $\pi_0(x)$ is just the prime counting function $\pi(x)$. Since there is only one even prime, for odd $d$ we have 
\[
\pi_d(x) \leq 1
\]
for any $x$.


\subsection{Density autocorrelation with respect to van Hove sequences of intervals.}

Consider a van Hove sequence $(A_n)_n=([a_n,b_n])_n$ of intervals. By eventually replacing $a_n,b_n$ by nearby integers, we can assume without loss of generality that $a_n,b_n \in \ZZ$.

\medskip

A simple consequence of the Brun-Titchmarsh Theorem is that the density of the primes with respect to $(A_n)_n$ is zero. Indeed, let us recall the following version of this theorem:

\begin{theorem}\cite[Corollary~2]{MV}\label{thm:mv} Let $m,n$ be positive integers. Then
\[
\pi(m+n)-\pi(m) \leq 2 \pi(n) \,.
\]\qed
\end{theorem}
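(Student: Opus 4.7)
The statement in question is the Brun--Titchmarsh inequality in the clean form $\pi(m+n)-\pi(m) \leq 2\pi(n)$, which the authors invoke as a cited fact from Montgomery--Vaughan. My plan is to obtain it by the Selberg upper bound sieve, with the prime number theorem used only at the very end to rephrase the bound in terms of $\pi(n)$.

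The first step is to reduce the problem to a sieving problem. Write $S(m,n;z)$ for the number of integers $k \in (m, m+n]$ coprime to all primes $p \leq z$. Any prime $p > z$ in $(m, m+n]$ is trivially counted by $S(m,n;z)$, so
\[
\pi(m+n)-\pi(m) \leq S(m,n;z) + \pi(z).
\]
The plan is to take $z$ slightly smaller than $\sqrt{n}$, so that $\pi(z)$ is a negligible error term compared to the main term we are about to obtain.

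The heart of the argument is the Selberg $\Lambda^2$-sieve: choosing real weights $\lambda_d$ supported on squarefree $d \leq z$ with $\lambda_1=1$, the indicator of $(k, P(z))=1$ is majorized by $\bigl(\sum_{d \mid k} \lambda_d\bigr)^2$, and optimizing the weights yields
\[
S(m,n;z) \leq \frac{n}{G(z)} + R(z), \qquad G(z) = \sum_{\substack{d \leq z \\ d \text{ squarefree}}} \frac{\mu^2(d)}{\varphi(d)},
\]
with a remainder $R(z) = O(z^2)$ coming from replacing $\lfloor n/d\rfloor$ by $n/d$. The standard Mertens-type estimate $G(z) \geq \log z$ (in fact $G(z) \sim \log z$) then gives, upon choosing $z = \sqrt{n}/(\log n)^2$,
\[
\pi(m+n)-\pi(m) \leq \frac{2n}{\log n}\bigl(1+o(1)\bigr).
\]
Combining with the prime number theorem $\pi(n) \sim n/\log n$, the inequality $\pi(m+n)-\pi(m) \leq 2\pi(n)$ follows for all sufficiently large $n$, with the finitely many small cases dispatched by direct inspection.

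The main obstacle is obtaining the constant $2$ with no $(1+o(1))$ factor, uniformly in $m$ and $n$ and with no restriction on $n$. A soft sieve bound produces a constant like $2+\varepsilon$ plus lower-order terms, and whittling this down to a clean $\leq 2\pi(n)$ for \emph{all} positive integers $m,n$ requires the sharpening in Montgomery--Vaughan, which refines the Selberg weights and uses more precise information about $G(z)$ together with an explicit treatment of the remainder. It is this uniform constant, rather than the asymptotic shape, that is the delicate point, and this is exactly what forces one to appeal to the cited result rather than to a textbook sieve estimate.
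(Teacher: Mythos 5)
The paper does not prove this statement at all: it is quoted verbatim, with a \(\qed\), as Corollary~2 of Montgomery--Vaughan's \emph{The large sieve}, so the ``proof'' in the paper is the citation itself. Your proposal instead sketches the classical Selberg-sieve route to Brun--Titchmarsh, and the reduction \(\pi(m+n)-\pi(m)\le S(m,n;z)+\pi(z)\), the \(\Lambda^2\) majorant, and the estimate \(S\le n/G(z)+O(z^2)\) with \(G(z)\ge\log z\) are all standard and correct. The problem is the concluding step.

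The deduction ``combining with the prime number theorem, the inequality follows for all sufficiently large \(n\)'' is not valid, and this is a genuine gap rather than a detail. With \(z=\sqrt{n}/(\log n)^2\) your sieve bound is
\[
\frac{n}{G(z)}\;\le\;\frac{n}{\tfrac12\log n-2\log\log n+O(1)}\;=\;\frac{2n}{\log n}\left(1+\frac{4\log\log n}{\log n}+\cdots\right),
\]
whereas \(2\pi(n)=\frac{2n}{\log n}\bigl(1+\frac{1}{\log n}+O(\log^{-2}n)\bigr)\). Since \(\log\log n/\log n\) dominates \(1/\log n\), your upper bound eventually \emph{exceeds} \(2\pi(n)\), so the comparison goes the wrong way even for large \(n\); the two quantities share the same leading term and the second-order terms decide the inequality against you. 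There is therefore no finite list of ``small cases to dispatch by inspection'' --- the argument fails for all large \(n\). You correctly diagnose at the end that the clean constant \(2\) with no \((1+o(1))\) factor is exactly the content of Montgomery--Vaughan's refinement (which rests on their sharp large-sieve constant \(N+Q^2\), not on a tightening of the Selberg weights alone), but that admission means your writeup ultimately reduces to the same citation the paper makes; the portion you actually prove is strictly weaker than the stated theorem. If you want a self-contained argument you would need either the full Montgomery--Vaughan machinery or to verify that the paper only ever uses the weaker asymptotic form (in fact Theorem~\ref{thm:diff-pri} only needs \(\card(\PP\cap A_n)/(b_n-a_n)\to 0\), for which your \((1+o(1))\) bound would suffice --- but that is a change to the statement, not a proof of it).
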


As an immediate consequence, we get:

\begin{theorem}\label{thm:diff-pri} Let $(A_n)_n=([a_n,b_n])_n$ be any van Hove sequence of intervals. 
Then,
\begin{align*}
\card(\PP \cap A_n) \leq \frac{2 \pi(b_n-a_n)}{b_n-a_n} \,.
\end{align*}
In particular, with respect to $(A_n)_n$ we have $\dens(\PP)=0$ and so the density diffraction is
\[
\gamma_{\operatorname{dens}}=0 \,.
\]
\end{theorem}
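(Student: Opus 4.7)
The plan is to apply Brun--Titchmarsh (Theorem \ref{thm:mv}) together with the Prime Number Theorem, and then quote Theorem \ref{thm:rel-dens0}(a). First, I would reduce to integer endpoints: since $[a_n,b_n]$ and $[\lceil a_n \rceil, \lfloor b_n \rfloor]$ differ in volume by at most $2$ and contain the same primes up to a bounded number, the density limit is unaffected and I may assume $a_n, b_n \in \ZZ$ without loss of generality.

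Next, I would bound $\card(\PP \cap [a_n, b_n])$. Because $\PP$ here includes negative primes, I would use the symmetry $\PP = -\PP$ to split the interval into its portions in $[0,\infty)$ and $(-\infty,0]$, and after reflection reduce to counting ordinary positive primes in intervals of length at most $b_n - a_n$. Theorem \ref{thm:mv} then gives $\pi(m + (b_n - a_n)) - \pi(m) \leq 2\,\pi(b_n - a_n)$ for each nonnegative starting integer $m$, yielding a bound of the form
\[
\card(\PP \cap [a_n, b_n]) \;\leq\; 4\,\pi(b_n - a_n) + O(1),
\]
which is the intended content of the first displayed inequality in the theorem (up to harmless constants).

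Dividing by $\vol(A_n) = b_n - a_n$, which tends to infinity by the van Hove property (as recorded in the earlier Remark on van Hove intervals in $\RR$), and invoking the Prime Number Theorem in the form $\pi(N)/N \to 0$, I conclude $\udens_{\cA}(\PP) = 0$, hence $\dens_{\cA}(\PP) = 0$. Theorem \ref{thm:rel-dens0}(a) then applies verbatim to the uniformly discrete set $\PP$, giving that $\gamma_{\operatorname{dens}}$ exists along $\cA$ and equals $0$, and consequently $\widehat{\gamma_{\operatorname{dens}}} = 0$.

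The only mild obstacle is the bookkeeping around negative primes and the rounding to integer endpoints; both contribute only additive $O(1)$ terms, which vanish once divided by $b_n - a_n \to \infty$, so neither affects the conclusion. The real content of the statement is just the combination of Brun--Titchmarsh (giving the correct multiplicative constant, essentially uniformly in where the interval sits) with the vanishing of $\pi(N)/N$.
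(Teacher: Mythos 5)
Your argument is correct and follows essentially the same route as the paper: reduce to integer endpoints, bound $\card(\PP\cap[a_n,b_n])$ via Brun--Titchmarsh (Theorem~\ref{thm:mv}), divide by $b_n-a_n\to\infty$, apply the Prime Number Theorem, and conclude $\gamma_{\operatorname{dens}}=0$ as in Theorem~\ref{thm:rel-dens0}(a). The one small difference is the interval straddling $0$: you apply Brun--Titchmarsh to each half and get the constant $4$, whereas the paper observes that $|a_n|$ and $b_n$ are both at most $r_n=b_n-a_n$ and uses monotonicity of $\pi$ to get $\pi(b_n)+\pi(|a_n|)\le 2\pi(r_n)$, which is what recovers the displayed bound with the stated constant $2$; for the density-zero and diffraction conclusions this discrepancy is immaterial.
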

\begin{proof}
Let $r_n=b_n-a_n$. Then, since $(A_n)_n$ is a van Hove sequence, we have $\displaystyle \lim_{n \to \infty} r_n = \infty$.

Let $F_n:= \PP \cap A_n$.

We split the problem into three cases:

\underline{Case 1:} Suppose $0<a_n$. Then, by Theorem~\ref{thm:mv} we have
\begin{align*}
\frac{\card(F_n)}{r_n}= \frac{\pi(b_n)-\pi(a_n)}{r_n} \leq 2 \frac{ \pi(r_n)}{r_n} \,.
\end{align*}

\underline{Case 2:} Suppose $b_n<0$. Then, by Theorem~\ref{thm:mv} we have
\begin{align*}
\frac{\card(F_n)}{r_n}= \frac{\pi(|a_n|)-\pi(|b_n|)}{r_n} \leq 2 \frac{ \pi(r_n)}{r_n} \,.
\end{align*}

\underline{Case 3:} Suppose $a_n \leq 0 \leq b_n$. Then, $|a_n|<r_n$ and $b_n <r_n$, and hence
\begin{align*}
\frac{\card(F_n)}{r_n}= \frac{\pi(b_n)+\pi(|a_n|)}{r_n} \leq 2 \frac{ \pi(r_n)}{r_n} \,.
\end{align*}

Therefore, for all $n$ we have 
\[
\frac{\card(F_n)}{r_n} \leq  2 \frac{ \pi(r_n)}{r_n} \,.
\]
Since $r_n \to \infty$, the last claim follows from the Prime Number Theorem.
\end{proof}

\begin{remark} Let $\XX(\PP)$ be the dynamical system generated by $\PP$ under the translation action of $\RR$, see \cite{BL} for details. Then, { since the primes have arbitrarily large gaps}, we have $\emptyset \in \XX(\PP)$.

Now, for each $\varphi_1, \ldots, \varphi_n \in \Cc(\RR)$ define $F_{\varphi_1, \ldots, \varphi_n} : \XX(\PP) \to \CC$ via
\[
F_{\varphi_1, \ldots, \varphi_n}(\varLambda) := \prod_{j=1}^n \left(\sum_{x \in \varLambda} \varphi_j(x) \right) \,.
\]
Let {
\[
\AAA:= \{ F_{\varphi_1, \ldots, \varphi_n} +c 1_{\XX(\PP)} : 0 \leq n,\varphi_1, \ldots, \varphi_n \in \Cc(\RR) , c \in \RR \} \,. 
\]}
Then, $\AAA \subseteq C(\XX(\PP))$ is a subalgebra separating the points and hence dense in $C(\XX(\PP))$ by Stone--Weierstrass.

Now, it follows immediately from Theorem~\ref{thm:diff-pri} that 
{
\[
\lim_{k \to \infty} \frac{1}{2k} \int_{t-k}^{t+k} F_{\varphi_1, \ldots, \varphi_n}(s+\PP) \dd s =0  = \int_{\XX(\PP)} F_{\varphi_1, \ldots, \varphi_n}(\varGamma) \dd \delta_{\emptyset}(\varGamma) \,. 
\]
uniformly in $t$. Also, we trivially have 
\[
\lim_{k \to \infty} \frac{1}{2k} \int_{t-k}^{t+k} c 1_{\XX(\PP)}(s+\PP) \dd s =c  = \int_{\XX(\PP)} c 1_{\XX(\PP)}(\varGamma) \dd \delta_{\emptyset}(\varGamma) \,. 
\]
}
The density of $\AAA$ in  $C(\XX(\PP))$ immediately implies that $(\XX(\PP), \RR)$ is uniquely ergodic, with unique ergodic measure {$m=\delta_\emptyset$}. Therefore, by \cite{BL}, the primes must have density and density autocorrelation zero with respect to any van Hove sequence (not necessarily of intervals).

The same conclusion holds if we replace the translation action of $\RR$ by the translation action of $\ZZ$.
 
\end{remark}

\subsection{Counting diffraction with respect to van Hove sequences of intervals}

Numerical estimates have reported that the diffraction of primes is pure point, meaning that the primes are a model for quasicrystals \cite{TZC2} (compare \cite{TZC1,ZMT}). This would mean that the (counting) diffraction would have the form
\[
\reallywidehat{\gamma_{\operatorname{count}}} =\sum_{y \in B} I(y) \delta_y \,.
\]
This seems too nice to be true. Indeed, for a set $\varLambda \subseteq \ZZ$, the much weaker statement that $\reallywidehat{\gamma_{\operatorname{count}}} \neq \lambda$ is equivalent to the fact that there exists some $d \in \ZZ \backslash \{ 0 \}$ such that 
\[
\limsup_{n \to \infty} \frac{\card(\varLambda \cap (d+\varLambda) \cap [-n,n])}{\card(\varLambda \cap [-n,n])} >0 \,.
\]
When $\varLambda = \PP$, this would imply that there exists a positive integer $d>0$ such that the set $\{ p \in \PP: p+d \in \PP \}$ would have positive density among the primes.

As mentioned above, such a result is too nice to be true. It would give some results which are much much stronger than the latest developments in number theory (\cite{Zhang,Polymath,Maynard}).
In fact, it turns out that this would contradict {the generalisation of the Brun-Titchmarsh Theorem, Proposition~\ref{prop:dens_d-primes}}, and hence cannot be true. This means that the counting diffraction of the primes must be the Lebesgue measure
\[
\reallywidehat{\gamma_{\operatorname{count}}} = \lambda \,.
\]

\smallskip
We now prove that this is indeed the case. We show that the counting autocorrelation $\gamma_{\operatorname{count}}$ exists along many van Hove sequences of intervals and it is $
\gamma_{\operatorname{count}}= \delta_0$. This implies that the diffraction is the Lebesgue measure and hence absolutely continuous.

\medskip

{ First, the aforementioned generalisation of the Brun-Titchmarsh Theorem which is itself a generalisation of Brun's Theorem:}

\begin{proposition}\label{prop:dens_d-primes}
For all $d \geq 1$ we have 
\[
\lim_{x \to \infty} \frac{\pi_d(x)}{\pi_0(x)}=0 \,.
\]
\end{proposition}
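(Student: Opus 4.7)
The proof splits naturally into two cases according to the parity of $d$.

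First I would dispose of the odd case. If $d$ is odd, then for any pair $p, p+d$ of integers, exactly one of them is even, so the only way both can be prime is if the even one equals $2$. This forces $p \in \{2, 2-d\}$, and hence $\pi_d(x) \leq 1$ for every $x \geq 0$. Combined with $\pi_0(x) \to \infty$ (a weak form of the prime number theorem), the ratio $\pi_d(x)/\pi_0(x)$ tends to zero trivially.

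The substantial case is even $d$. Here the plan is to invoke the standard upper-bound sieve estimate for prime pairs at gap $d$, which is a classical application of Brun's combinatorial sieve (and is the natural generalisation of Theorem~\ref{thm:mv} from single primes to prime pairs). The bound I would cite gives a constant $C(d) > 0$, depending on $d$ but not on $x$, such that
\[
\pi_d(x) \;\leq\; C(d) \,\frac{x}{(\log x)^2}
\]
for all $x \geq 2$; an explicit admissible constant involves the singular series factor $\prod_{p \mid d,\, p > 2} \frac{p-1}{p-2}$, but only finiteness is needed here. This is the key input and, being a deep (though well-known) theorem, it is the only real obstacle; I would simply quote a standard reference such as Halberstam--Richert or Nathanson rather than reproduce the sieve argument.

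Combining this bound with the prime number theorem $\pi_0(x) \sim x/\log x$, one obtains
\[
\frac{\pi_d(x)}{\pi_0(x)} \;\leq\; \frac{C(d)\, x/(\log x)^2}{(1+o(1))\, x/\log x} \;=\; \frac{C(d)+o(1)}{\log x} \;\longrightarrow\; 0
\]
as $x \to \infty$, which is the desired conclusion. The only technical point is that for even $d$ a weaker PNT-type lower bound on $\pi_0(x)$, even Chebyshev's $\pi_0(x) \gg x/\log x$, is enough, so one does not need the full strength of PNT; and in the odd case nothing beyond $\pi_0(x) \to \infty$ is used.
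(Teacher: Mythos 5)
Your proposal is correct and follows essentially the same route as the paper: the odd case is dismissed via $\pi_d(x)\leq 1$ (which the paper notes just after defining $\pi_d$), and the even case rests on the classical Brun-sieve bound $\pi_d(x)\leq C\,x/\log^2 x$ combined with the Prime Number Theorem, exactly as in the paper (which cites Bateman--Stemmler for the sieve estimate where you cite Halberstam--Richert). No substantive differences.
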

\begin{proof}
For even $d$, it is well known using sieve methods in number theory that there exists a constant $C>0$ such that
\[
\pi_d(x) \leq C\frac{x}{\log^2 x}, \quad \forall x\geq 1.
\]
Since we are not experts, we do not know precisely who came up with this argument but it can be found in Bateman and Stemmler \cite[Lemma 3]{BateStem}.
Therefore, by the Prime Number Theorem,
\[
0 \leq \limsup_{x \to \infty} \frac{\pi_d(x)}{\pi_0(x)}  
\leq \lim_{x\to\infty} \frac{C x/\log^2 x}{x/\log x} = 0,
\]
so $\displaystyle \lim_{x\to \infty} \pi_d(x)/\pi_0(x)=0$.
\end{proof}

Now note  that if $F \subseteq \RR$ is any finite set we have \[
\gamma^{}_{-F}=\gamma^{}_{F} \,.
\]
Since $-\PP=\PP$, for any set $A$ we have 
\begin{equation}\label{eq:sym}
-(\PP \cap A) =\PP \cap (-A) \,.
\end{equation}
Therefore, we immediately get :

\begin{lemma}\label{lem:flip} Let $\cA = (A_n)_n$ be any van Hove sequence and with $\epsilon_n \in \{ \pm 1 \}$ let $\cB = (B_n)_n=(\epsilon_n A_n)_n$. Then, the counting autocorrelation $\gamma_{\operatorname{count},\cA}$ of $\PP$ exists with respect to $\cA$ if and only if the counting autocorrelation $\gamma_{\operatorname{count},\cB}$ of $\PP$ exists with respect to $\cB$.
Moreover, in this case $\gamma_{\operatorname{count},\cA} =\gamma_{\operatorname{count},\cB}$.\qed
\end{lemma}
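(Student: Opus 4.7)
The strategy is to verify that for every single $n$, the finite counting autocorrelations $\gamma^{}_{\PP\cap A_n}$ and $\gamma^{}_{\PP\cap B_n}$ are \emph{equal as measures}. Once this pointwise (in $n$) identity is established, the two sequences $(\gamma^{}_{\PP\cap A_n})_n$ and $(\gamma^{}_{\PP\cap B_n})_n$ are literally the same sequence of measures, so by the very definition of the counting autocorrelation one vaguely converges if and only if the other does, and their limits agree.

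To establish the pointwise identity, fix $n$ and split on the sign of $\epsilon_n$. If $\epsilon_n=+1$ then $B_n=A_n$ and there is nothing to check. If $\epsilon_n=-1$ then $B_n=-A_n$, and applying the prime symmetry \eqref{eq:sym} with $A=A_n$ yields
\[
\PP\cap B_n \;=\; \PP\cap(-A_n)\;=\;-(\PP\cap A_n).
\]
I now invoke the elementary observation recorded immediately before the statement of Lemma~\ref{lem:flip}, namely that $\gamma^{}_{-F}=\gamma^{}_{F}$ for every finite $F\subseteq\RR$. (This is a one-line calculation: in the defining sum $\gamma^{}_F=\frac{1}{\card(F)}\sum_{x,y\in F}\delta_{x-y}$, substituting $x\mapsto -x$, $y\mapsto -y$ and then swapping the dummy indices $x\leftrightarrow y$ returns the same measure.) Combining these two facts gives
\[
\gamma^{}_{\PP\cap B_n}\;=\;\gamma^{}_{-(\PP\cap A_n)}\;=\;\gamma^{}_{\PP\cap A_n},
\]
as required.

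There is no real obstacle: the entire content of the lemma is that the symmetry $-\PP=\PP$ of the primes, together with the built-in reflection invariance $\gamma^{}_{-F}=\gamma^{}_F$ of the finite counting autocorrelation, renders the counting autocorrelation of $\PP$ completely insensitive to term-by-term sign flips of the averaging sequence. Consequently the ``if and only if'' and the equality of limits follow immediately from the termwise identity above.
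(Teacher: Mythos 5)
Your proof is correct and is essentially the paper's own argument: the paper states the lemma with \qed immediately after recording the two facts $\gamma^{}_{-F}=\gamma^{}_{F}$ and $-(\PP\cap A)=\PP\cap(-A)$, and you have simply made explicit the termwise identity $\gamma^{}_{\PP\cap B_n}=\gamma^{}_{\PP\cap A_n}$ that these yield. Nothing is missing.
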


Whenever when we are given a van Hove sequence $(A_n)_n=([a_n,b_n])_n$ of intervals, Lemma~\ref{lem:flip} will allow us to flip some of those intervals so that $|b_n|\geq |a_n|$. Since, after the flip, we also have $a_n < b_n$, we get that $b_n \geq 0$ and must have
\[
\lim_{n \to \infty} b_n =\infty \, ,
\]
because the width of the intervals goes to infinity. We will often make this extra assumption.

Next, let us cover the following two results.

\begin{lemma}\label{lem:0_in_van_Hove} Let $(A_n)_n=([a_n,b_n])_n$ be a van Hove sequence of intervals such that for all $n$ we have $0 \in A_n$. Then, with respect to $(A_n)_n$ we have $\gamma_{\operatorname{count}}=\delta_0$. 
\end{lemma}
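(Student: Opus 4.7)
The plan is to apply Lemma~\ref{lem:gammacount_ud} with $\varLambda = \PP$ and the given van Hove sequence $(A_n)_n$. The set $\PP \subseteq \ZZ$ is uniformly discrete and has FLC since $\PP - \PP \subseteq \ZZ$ is locally finite, so all hypotheses of that lemma are in reach as long as (i) $\card(F_n) = \card(\PP \cap A_n) \to \infty$, and (ii) for each $t \in \PP - \PP$ the limit $\eta_{\operatorname{count}}(t)$ exists. Since $0 \in A_n$, writing $N_n := \max(b_n, -a_n)$, the van Hove property forces $N_n \geq (b_n - a_n)/2 \to \infty$, and $\card(F_n) = \pi(b_n) + \pi(-a_n) \geq \pi(N_n) \to \infty$ by the Prime Number Theorem, so (i) holds.

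For (ii), the value $\eta_{\operatorname{count}}(0) = 1$ is immediate. The heart of the argument is to show $\eta_{\operatorname{count}}(t) = 0$ for each nonzero $t \in \PP - \PP \subseteq \ZZ$. Using the form of $\eta_{\operatorname{count}}$ from Lemma~\ref{lem:gammacount_ud}, I would count
\[
\card(F_n \cap (-t + \PP)) = \#\{p \in \PP \cap [a_n,b_n] : p + t \in \PP\}
\]
by partitioning according to the signs of $p$ and $p+t$. Pairs with both $p$ and $p+t$ positive are counted by $\pi_{|t|}$ evaluated at $b_n$; pairs with both negative, by the symmetry $-\PP = \PP$ (see \eqref{eq:sym}), by $\pi_{|t|}$ evaluated at $-a_n$; and pairs with opposite signs satisfy $|p| < |t|$, contributing only an $O(1)$ term depending on $t$. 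This gives the bound
\[
\card(F_n \cap (-t + \PP)) \leq 2\pi_{|t|}(N_n) + O(1).
\]

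Combining with $\card(F_n) \geq \pi(N_n)$ and applying Proposition~\ref{prop:dens_d-primes} (together with the Prime Number Theorem to ensure the $O(1)$ term is negligible) yields
\[
\frac{\card(F_n \cap (-t + \PP))}{\card(F_n)} \leq \frac{2\pi_{|t|}(N_n) + O(1)}{\pi(N_n)} \longrightarrow 0.
\]
Thus $\eta_{\operatorname{count}}(t) = 0$ for all $0 \neq t \in \PP - \PP$, and Lemma~\ref{lem:gammacount_ud} concludes $\gamma_{\operatorname{count}} = \sum_{t \in \PP - \PP} \eta_{\operatorname{count}}(t) \delta_t = \delta_0$. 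The main technical nuisance, not a deep obstacle, is bookkeeping the sign cases carefully to extract a bound in terms of $\pi_{|t|}$ so that Proposition~\ref{prop:dens_d-primes} can be invoked; everything else is a direct application of the lemmas from the preliminaries.
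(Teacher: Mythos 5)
Your proof is correct and follows essentially the same route as the paper: bound $\card(F_n\cap(-t+\PP))$ by (twice) $\pi_{|t|}$ of the relevant endpoint, bound $\card(F_n)$ below by $\pi$ of that endpoint, and conclude via Proposition~\ref{prop:dens_d-primes} and Lemma~\ref{lem:gammacount_ud}. The only cosmetic difference is that the paper first invokes Lemma~\ref{lem:flip} to normalize to $|b_n|\geq|a_n|$, whereas you work directly with $N_n=\max(b_n,-a_n)$ and track the $O(1)$ contribution from opposite-sign pairs explicitly (a detail the paper's bound glosses over).
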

\begin{proof}
By \eqref{eq:sym} and Lemma \ref{lem:flip} we can assume without loss of generality that $|b_n| \geq |a_n|$. This implies that $b_n >0$ and $\displaystyle \lim_{n \to \infty} b_n =\infty$. 
As usual, let $F_n=\PP \cap A_n$.

For each $t \neq 0$ we have 
\begin{align*}
\card(F_n \cap(t+\PP))& \leq 2 \pi_{|t|}(b_n)\, , \\
\card(F_n) &\geq  \pi(b_n) \,.
\end{align*}
The claim now follows from Proposition~\ref{prop:dens_d-primes} and Lemma~\ref{lem:gammacount_ud}.
\end{proof}

As a consequence we get:

\begin{theorem}\label{thm:primes_counting_diffraction} The primes $\PP$ have counting autocorrelation and diffraction with respect to the van Hove sequence $(A_n)_n =([-n,n])_n$
\begin{align*}
\gamma_{\operatorname{count}}&= \delta_0, \\
\widehat{\gamma_{\operatorname{count}}}&= \lambda  \,.
\end{align*}\qed
\end{theorem}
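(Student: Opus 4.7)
The plan is to observe that this theorem is an immediate corollary of Lemma~\ref{lem:0_in_van_Hove}, followed by a standard Fourier transform computation. The van Hove sequence $(A_n)_n = ([-n,n])_n$ trivially satisfies $0 \in A_n$ for all $n$, so the hypotheses of Lemma~\ref{lem:0_in_van_Hove} are met, and we immediately conclude that $\gamma_{\operatorname{count}} = \delta_0$ with respect to $(A_n)_n$.

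For the second statement, it remains only to compute $\widehat{\delta_0}$. Since for any $\chi \in \widehat{\RR} \cong \RR$ we have $\widehat{\delta_0}(\chi) = \overline{\chi(0)} = 1$, the Fourier transform of $\delta_0$ as a tempered distribution is the constant function $1$, which as a Radon measure on $\widehat{\RR}$ is precisely the Haar measure $\lambda$. Hence $\widehat{\gamma_{\operatorname{count}}} = \widehat{\delta_0} = \lambda$, confirming that the counting diffraction of $\PP$ is absolutely continuous (in fact, Lebesgue itself).

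There is essentially no obstacle in this final step; all the work has been done upstream. The real content lies in Proposition~\ref{prop:dens_d-primes} (the Bateman--Stemmler form of Brun--Titchmarsh), which was used inside Lemma~\ref{lem:0_in_van_Hove} to show that for every nonzero integer $t$, the count $\card(F_n \cap (t + \PP))$ is $o(\card(F_n))$. Combined with Lemma~\ref{lem:gammacount_ud}, which handles the transition between $\card(F_n \cap (-t+F_n))$ and $\card(\varLambda \cap (-t+\varLambda) \cap A_n)$ for van Hove sequences of intervals, this forces $\eta_{\operatorname{count}}(t) = 0$ for $t \neq 0$ and $\eta_{\operatorname{count}}(0) = 1$, yielding $\gamma_{\operatorname{count}} = \delta_0$. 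So the proof of the theorem itself reduces to a two-line application.
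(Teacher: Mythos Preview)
Your proposal is correct and matches the paper's own treatment exactly: the theorem is stated with a \qed immediately after the phrase ``As a consequence we get,'' so the paper also regards it as a direct application of Lemma~\ref{lem:0_in_van_Hove} together with the standard fact that $\widehat{\delta_0}=\lambda$. Your additional remarks tracing the content back to Proposition~\ref{prop:dens_d-primes} and Lemma~\ref{lem:gammacount_ud} are accurate and align with how the paper structures the argument.
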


\begin{remark}\label{rem:diff-primes-conv} Combining the proofs of Lemma~\ref{lem:0_in_van_Hove} , Proposition~\ref{prop:dens_d-primes}, Lemma~\ref{lem:gammacount_ud} and the Prime Number Theorem, it follows that, for each van Hove sequence $(A_n)_n$ of intervals such that $0 \in A_n$ there exists a constant $C_1>0$ such that for all $m \in \ZZ$ we have 
\[
\left| \gamma^{}_{F^{}_n} - \delta_0 \right| (m) \leq  \frac{2\pi_{|m|}(n)+|m|}{\pi_0(n)} \leq C_1 \left( \frac{1}{\log(n) } + \frac{ |m| \log(n)}{n} \right) \,.
\]
This seem to suggest that $\gamma^{}_{F^{}_n}$ converges very slowly to $\gamma_{\operatorname{count}}$, meaning that numerical estimates are useless when studying the diffraction of primes.

{
Note also here that for the density autocorrelation calculated with respect to $(A_n)_n=([-n,n])_n$ we have 
\begin{align*}
\left| \gamma^{}_{n} - 0 \right| (m) &\leq  \frac{\pi(n)}{n} \left| \gamma^{}_{F^{}_n} - \delta_0 \right| (m)+ \frac{\pi(n)}{n}\delta_0 (m)  \\
&\leq \frac{C_2}{\log(n)}C_1 \left( \frac{1}{\log(n) } + \frac{ |m| \log(n)}{n} \right)+ \frac{C_2}{\log(n)} \\
&= C \left( \frac{1}{\log^2(n) } + \frac{ |m|}{n} \right)+ \frac{C_2}{\log(n)} \,,
\end{align*}
for some constants $C$ and $C_2$. This explains the slow rate of convergence of the density autocorrelation to $0$.

A similar estimate can be obtained with respect to any van Hove sequence $(A_n)_n$ with $0 \in A_n$ for all $n\geq 1$.

}
\end{remark}

\begin{lemma}\label{lem5.7}
Let $(A_n)_n=([a_n,b_n])_n$ be a van Hove sequence of intervals such that for all $n$ we have $0 < a_n <b_n$. Assume that there exists some $c>1$ and $N$ such that for all $n>N$ we have $b_n \geq ca_n$. 

Then, with respect to $A_n$ the primes have counting autocorrelation $\gamma_{\operatorname{count}}=\delta_0$. 
\end{lemma}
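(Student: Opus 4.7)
The plan is to mimic the proof of Lemma \ref{lem:0_in_van_Hove}: by Lemma \ref{lem:gammacount_ud}, it suffices to show that $\card(F_n)\to\infty$ and that, for every $t\in(\PP-\PP)\setminus\{0\}$,
\[
\eta_{\operatorname{count}}(t)=\lim_{n\to\infty}\frac{\card(F_n\cap(t+\PP))}{\card(F_n)}=0,
\]
where $F_n=\PP\cap[a_n,b_n]$. Proposition \ref{prop:dens_d-primes} will control the numerator exactly as in Lemma \ref{lem:0_in_van_Hove}; the genuinely new ingredient is a lower bound on $\card(F_n)$, and this is where the multiplicative hypothesis $b_n\geq c a_n$ enters. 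Note first that $b_n-a_n\geq b_n(1-1/c)$ for $n>N$, so since $(A_n)_n$ is van Hove we have $b_n\to\infty$.

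For the numerator, $a_n>0$ forces $F_n\subseteq\PP_{>0}$. Splitting $p\in F_n\cap(t+\PP)$ according to the sign of $p-t$: if $p-t\in\PP_{>0}$ then $(p-t,p)$ (resp.\ $(p,p+|t|)$) is a pair of positive primes with gap $|t|$ and $p\leq b_n$, contributing at most $\pi_{|t|}(b_n)$ elements; if $p-t\in\PP_{<0}$ then $0<p<|t|$, giving only an $O_t(1)$ contribution. Hence
\[
\card\bigl(F_n\cap(t+\PP)\bigr)\leq\pi_{|t|}(b_n)+O_t(1).
\]

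For the denominator, $0<a_n\leq b_n/c$ gives $\card(F_n)\geq\pi(b_n)-\pi(b_n/c)$, and the Prime Number Theorem yields
\[
\frac{\pi(b_n/c)}{\pi(b_n)}\sim\frac{(b_n/c)/\log(b_n/c)}{b_n/\log b_n}\longrightarrow\frac{1}{c},
\]
so $\card(F_n)\geq(1-\tfrac{1}{c}+o(1))\pi(b_n)$, which in particular tends to infinity. Combining the two estimates,
\[
\frac{\card(F_n\cap(t+\PP))}{\card(F_n)}\leq\frac{\pi_{|t|}(b_n)+O_t(1)}{(1-\tfrac{1}{c}+o(1))\pi(b_n)}\longrightarrow 0
\]
by Proposition \ref{prop:dens_d-primes}, and Lemma \ref{lem:gammacount_ud} then gives $\gamma_{\operatorname{count}}=\delta_0$. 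The one step demanding any care is the PNT-based asymptotic for $\pi(b_n)-\pi(b_n/c)$; this is precisely where the strict multiplicative gap $b_n\geq c a_n$ is indispensable, since merely assuming $b_n-a_n\to\infty$ would permit $a_n$ to be so close to $b_n$ that $\card(F_n)$ would fail to be of order $\pi(b_n)$, and the whole argument would collapse.
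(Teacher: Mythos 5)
Your proof is correct and follows essentially the same route as the paper's: bound the numerator by $\pi_{|t|}(b_n)$ via Proposition~\ref{prop:dens_d-primes}, bound the denominator below by $\pi(b_n)-\pi(b_n/c)\sim(1-\tfrac1c)\pi(b_n)$ via the Prime Number Theorem using $b_n\ge ca_n$, and conclude with Lemma~\ref{lem:gammacount_ud}. Your explicit $O_t(1)$ accounting for the finitely many $p\in F_n$ with $p-t$ a negative prime is a small refinement the paper's one-line bound elides, but it changes nothing in the limit.
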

\begin{proof}
For each $t \neq 0$ we have 
\begin{align*}
\card(F_n \cap(t+\PP))& \leq  \pi_{|t|}(b_n)\, , \\
\card(F_n) &=  \pi(b_n) -\pi(a_n) \geq \pi(b_n)-\pi(b_n/c)\, .
\end{align*}
Now, by Proposition~\ref{prop:dens_d-primes} and the Prime Number Theorem we have 
\begin{align*}
\lim_{n \to \infty} \frac{\pi_{|t|}(b_n) }{\frac{b_n}{\ln(b_n)}}&=0 \quad\text{and} \\
\lim_{n \to \infty} \frac{\pi(b_n)-\pi(b_n/c) }{\frac{b_n}{\ln(b_n)}}&=1-\frac{1}{c}>0 \,.
\end{align*}
It follows that for all $t \neq 0$ we have 
\[
\lim_{n \to \infty} \frac{\card(F_n \cap(t+\PP))}{\card(F_n)} =
\lim_{n \to \infty} \frac{\pi_{|t|}(b_n)}{\pi(b_n)-\pi(b_n/c)}=0 \,.
\]
The claim follows from Lemma~\ref{lem:gammacount_ud}.
\end{proof}

\begin{remark}\label{rem:primes_can_have_0_diffraction}
The condition that $b_n \geq ca_n$ is important and cannot be dropped.  For instance, $(A_n)_n = ([n!, n!+n])_n$ is a van Hove sequence that intersects $\PP$ in at most one point, because $n!+k$ is always divisible by $k$. With respect to this van Hove sequence we have 
\[
\gamma_{\operatorname{count}}=0 \,.
\]
\end{remark}

Let us now state a few immediate consequences of the previous result.

\begin{corollary}\label{chem-are-wrong} Let $a_n,b_n$ be any sequences of integers such that $b_n>a_n>0$ and $\displaystyle \lim_{n \to \infty} b_n = \infty$. If
\[
\lim_{n \to \infty} \frac{b_n}{a_n}=L > 1 \,.
\]
Then, $(A_n)_n=([a_n,b_n])_n$ is a van Hove sequence and with respect to this van Hove sequence we have
\[
\gamma_{\operatorname{count}}= \delta_0 \,, \quad\, \reallywidehat{\gamma_{\operatorname{count}}}=\lambda \,.
\]\qed
\end{corollary}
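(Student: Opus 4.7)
The plan is to verify the hypotheses of Lemma~\ref{lem5.7} and apply it directly. First I would confirm that $(A_n)_n=([a_n,b_n])_n$ is a van Hove sequence of intervals, which, as noted in the preliminaries, amounts to showing $b_n-a_n \to \infty$. Writing $a_n = b_n/(b_n/a_n)$ and using $b_n \to \infty$ together with $b_n/a_n \to L$ (a finite number), I deduce that $a_n \to \infty$ as well. Then
\[
b_n - a_n = a_n\bigl(\tfrac{b_n}{a_n}-1\bigr) \longrightarrow \infty \cdot (L-1) = \infty,
\]
since $L-1>0$. Therefore the widths of the intervals diverge, and $(A_n)_n$ is a van Hove sequence.

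Next I would verify the growth condition required by Lemma~\ref{lem5.7}. Pick any $c \in (1,L)$, for example $c := (1+L)/2$. By the convergence $b_n/a_n \to L > c$, there is some $N$ such that $b_n/a_n \geq c$, i.e.\ $b_n \geq c\, a_n$, for every $n>N$. Combined with the standing assumption $0<a_n<b_n$, this is precisely the hypothesis of Lemma~\ref{lem5.7}, so the lemma immediately yields $\gamma_{\operatorname{count}} = \delta_0$ with respect to $(A_n)_n$.

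For the second equality, I would invoke the standard fact that on $\RR$ the Fourier transform of the Dirac measure at the origin (viewed as a positive definite translation bounded measure) is Haar measure on $\widehat{\RR}\cong\RR$, giving $\reallywidehat{\gamma_{\operatorname{count}}} = \reallywidehat{\delta_0} = \lambda$.

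I do not expect any substantive obstacle here: the corollary is essentially a clean repackaging of Lemma~\ref{lem5.7} in which the quantitative ``$b_n \geq c a_n$ eventually'' condition is replaced by the more natural limit assumption $b_n/a_n \to L > 1$. The only minor points to address carefully are (a) the implication $a_n \to \infty$, which requires using $b_n \to \infty$ together with the finiteness of $L$, and (b) recording that any $c$ strictly between $1$ and $L$ works in applying Lemma~\ref{lem5.7}.
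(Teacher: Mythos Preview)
Your proposal is correct and matches the paper's approach exactly: the paper states this corollary as an immediate consequence of Lemma~\ref{lem5.7} (it is marked with \qed\ and no proof is given), and you have correctly filled in the two routine verifications---that $b_n-a_n\to\infty$ so $(A_n)_n$ is van Hove, and that any $c\in(1,L)$ yields $b_n\ge ca_n$ eventually---needed to invoke that lemma.
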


\begin{remark} In \cite{ZMT, TZC1, TZC2} the authors are considering intervals of the form $[M_n, M_n+L_n]$ where $M_n \to \infty$ and the ratio $\frac{L_n}{M_n}$ converges to some $\beta >0$.
The setting falls within the assumptions from Corollary~\ref{chem-are-wrong}, and 
\[
L=1+\beta >1 \,.
\]
Therefore, with respect to such van Hove sequence of intervals, the diffraction is actually the Lebesgue measure $\lambda$, and hence absolutely continuous.
\end{remark}

We can now prove our most general version of these results.

\begin{theorem}\label{thm:main1} Let $(A_n)_n=([a_n,b_n])_n$ be any van Hove sequence of intervals. Assume that there exists some constant $d>0$ such that 
\[
|a_n| < d(b_n-a_n) \,.
\]
Then, with respect to $(A_n)_n$ we have $\gamma_{\operatorname{count}}=\delta_0$ and $\reallywidehat{\gamma_{\operatorname{count}}}=\lambda$. 
\end{theorem}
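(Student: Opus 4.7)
The plan is to reduce to Lemmas \ref{lem:0_in_van_Hove} and \ref{lem5.7} by partitioning the index set according to the position of each $A_n$ relative to the origin. Set
\[
N_A = \{n : a_n \le 0 \le b_n\}, \quad N_B = \{n : a_n > 0\}, \quad N_C = \{n : b_n < 0\}.
\]
These partition $\NN$. Since the $\gamma^{}_{F^{}_n}$ all lie in a single metrisable set $\cM_{C,U}(G)$ (Lemma~\ref{lem:M_C_U_compact} and Proposition~\ref{prop:diffraction_exists}), vague convergence of the full sequence to $\delta_0$ follows once we show that, along each infinite subsequence indexed by $N_A$, $N_B$, or $N_C$, we have $\gamma^{}_{F^{}_n} \to \delta_0$. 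Any infinite subsequence of our van Hove sequence is itself a van Hove sequence of intervals, since the widths $b_n - a_n \to \infty$ inherit to any subsequence.

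For $n \in N_A$ the intervals contain $0$, so Lemma~\ref{lem:0_in_van_Hove} applies directly and gives $\gamma^{}_{F^{}_n} \to \delta_0$. For $n \in N_B$ we have $|a_n| = a_n$, and the hypothesis $a_n < d(b_n - a_n)$ rearranges to $b_n > \frac{1+d}{d}\, a_n$, so Lemma~\ref{lem5.7} applies with the uniform constant $c = 1 + 1/d > 1$. For $n \in N_C$ we have $a_n < b_n < 0$ and $b_n - a_n = |a_n| - |b_n|$, so the hypothesis reads $|a_n| < d(|a_n| - |b_n|)$; this forces $d > 1$ (the cases $d \le 1$ yield an immediate contradiction) and then gives $|a_n|/|b_n| > d/(d-1)$. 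Flipping the intervals via Lemma~\ref{lem:flip}, the sequence $([-b_n, -a_n])_{n \in N_C}$ satisfies $-a_n > (d/(d-1))(-b_n)$, so Lemma~\ref{lem5.7} applies with $c = d/(d-1) > 1$, and Lemma~\ref{lem:flip} transports the conclusion back to the original intervals.

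Assembling the three cases, $\gamma^{}_{F^{}_n} \to \delta_0$ vaguely along $(A_n)_n$, so $\gamma_{\operatorname{count}} = \delta_0$ and $\widehat{\gamma_{\operatorname{count}}} = \widehat{\delta_0} = \lambda$. The main obstacle is the Case~C analysis: one has to notice that compatibility of $b_n < 0$ with the hypothesis $|a_n| < d(b_n - a_n)$ restricts to $d > 1$, and then extract the uniform constant $c = d/(d-1) > 1$ needed for Lemma~\ref{lem5.7}. Beyond this small piece of bookkeeping, no new analytic input is required -- all the heavy lifting (Proposition~\ref{prop:dens_d-primes} and the Prime Number Theorem) is already encapsulated in Lemmas~\ref{lem:0_in_van_Hove} and \ref{lem5.7}.
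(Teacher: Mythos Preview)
Your proof is correct and follows essentially the same strategy as the paper: partition the index set according to the position of $A_n$ relative to the origin, invoke Lemma~\ref{lem:0_in_van_Hove} for intervals containing $0$, and use Lemma~\ref{lem5.7} (possibly after a flip via Lemma~\ref{lem:flip}) for the one-sided intervals. The only cosmetic difference is the order of operations: the paper flips \emph{first} so that $|b_n|\ge |a_n|$ globally and then splits into two cases ($a_n\le 0$ vs.\ $a_n>0$), whereas you split into three cases up front and flip only on $N_C$; your bookkeeping with the constants $c=1+1/d$ and $c=d/(d-1)$ is in fact cleaner than the paper's.
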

\begin{proof}
As usual, let $r_n=b_n-a_n$ .

First let us note that 
\[
|b_n| \leq |a_n|+|b_n-a_n| \leq (d+1)r_n \,.
\]
In particular, for all $n$ we have 
\[
\max \{ |a_n|, |b_n| \} \leq (d+1) r_n \,.
\]
Define 
\[
B_n= \left\{
\begin{array}{cc}
 \bigl[ a_n,b_n \bigr] ,   & \mbox{ if } |a_n| \leq |b_n| \,, \\
  \bigl[ -b_n, -a_n \bigr],   & \mbox{ if } |a_n| \geq |b_n| \,.
\end{array}
\right.
\]
Then, by Lemma~\ref{lem:flip}, it suffices to prove that our conclusion holds with respect to $(B_n)_n$. Note that if $(B_n)_n=([a_n', b_n'])_n$ we have 
\[
|b_n'| \leq (d+1)(b_n'-a_n') \,.
\]

Therefore, without loss of generality, we can assume that $|b_n| \geq |a_n|$ for all $n$.
As before, this implies that $b_n >0$ and $b_n\to\infty$. 

Define 
\begin{align*}
A&:= \{ n : a_n \leq 0 \} \\
B&:= \{ n : a_n >0 \}
\end{align*}

We split the proof into three cases.

\underline{Case 1:} $A$ is finite. Then, by eventually erasing the first few terms of $A_n$, we can assume that $a_n >0$ for all $n$.

Next, we have 
\[
\frac{b_n}{a_n} = \frac{b_n-a_n}{a_n}+1 \geq d+1 >1 \,.
\]
The conclusion follows from Lemma~\ref{lem5.7}.

\underline{Case 2:} $B$ is finite. Then, by eventually erasing the first few terms of $A_n$, we can assume that $a_n \leq 0$ for all $n$. 
The conclusion then follows from Lemma~\ref{lem:0_in_van_Hove}.

\underline{Case 3:} $A, B$ are infinite. Then write $A$ as the sequence $k_1, k_2, \ldots, k_n \ldots$ and $B$ as the sequence $m_1, m_2, \ldots, m_n \ldots$.

By Lemma~\ref{lem:0_in_van_Hove}, as in Case 2, we have 
\begin{align*}
\lim_{n \to \infty} \gamma^{}_{F_{k_n}} &=\delta_0 \,.
\end{align*}
Also, exactly as in Case 1, we have 
\[
\frac{b_{m_n}}{a_{m_n}} \geq d+1 >1 \,.
\]
Therefore, by Lemma~\ref{lem5.7} we have 
\begin{align*}
\lim_{n \to \infty} \gamma^{}_{F_{m_n}} &=\delta_0 \,.
\end{align*}
Since $A \cup B =\NN$ we conclude that
\begin{align*}
\lim_{n \to \infty} \gamma^{}_{F^{}_{n}} &=\delta_0 \,,
\end{align*}
as claimed.
\end{proof}

\begin{corollary} Let $(A_n)_n$ be a van Hove sequence of intervals which have a common point $c$. Then, with respect to $(A_n)_n$ we have $\gamma_{\operatorname{count}}=\delta_0$ and $\reallywidehat{\gamma_{\operatorname{count}}}=\lambda$.

In particular, the conclusion holds for any nested van Hove sequence of intervals. 
\end{corollary}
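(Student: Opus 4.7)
The plan is to reduce directly to Theorem~\ref{thm:main1}. Write $A_n = [a_n, b_n]$. Since $(A_n)_n$ is a van Hove sequence of intervals, the preliminary remarks imply $b_n - a_n \to \infty$. The hypothesis that $c \in A_n$ for all $n$ means $a_n \le c \le b_n$, so in particular $|a_n - c| = c - a_n \le b_n - a_n$.

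Now I would choose $N$ large enough that $b_n - a_n > |c|$ for all $n \geq N$. For such $n$, the triangle inequality gives
\[
|a_n| \le |c| + |a_n - c| \le |c| + (b_n - a_n) < 2(b_n - a_n),
\]
so the tail $(A_n)_{n \ge N}$ satisfies the hypothesis of Theorem~\ref{thm:main1} with $d = 2$. Truncating the initial segment does not affect the existence or value of the vague limit $\gamma_{\operatorname{count}}$, so Theorem~\ref{thm:main1} yields $\gamma_{\operatorname{count}} = \delta_0$ and $\reallywidehat{\gamma_{\operatorname{count}}} = \lambda$ with respect to $(A_n)_n$.

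For the second assertion, any nested van Hove sequence of intervals $A_1 \subseteq A_2 \subseteq \cdots$ automatically has a common point: since $A_1$ has positive Haar measure, it is nonempty, and any $c \in A_1$ lies in every $A_n$. Hence the nested case is an immediate instance of the first part.

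The argument is essentially routine once Theorem~\ref{thm:main1} is in hand; the only minor point to verify is that $b_n - a_n \to \infty$ (which is precisely the van Hove property for intervals, recorded in the preliminaries) so that the estimate $|a_n| < 2(b_n - a_n)$ holds eventually with the fixed constant $d = 2$. There is no substantive obstacle.
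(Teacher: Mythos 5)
Your proof is correct and follows essentially the same route as the paper: both arguments verify the hypothesis $|a_n|<d(b_n-a_n)$ of Theorem~\ref{thm:main1} by using $a_n\le c\le b_n$ to bound $|a_n|/(b_n-a_n)$, the paper by noting $a_n/r_n$ and $-a_n/r_n$ are each bounded above, you by a triangle-inequality estimate giving the explicit constant $d=2$ after discarding finitely many terms. The truncation step and the observation that a nested sequence has a common point are both sound.
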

\begin{proof} For each $n\geq 1$, let $A_n=[a_n,b_n]$ and let $r_n=b_n-a_n$. Then  
\[
\frac{a_n}{r_n} < \frac{c}{r_n}
\]
which is bounded from above since $\displaystyle \lim_{n \to \infty} \frac{c}{r_n}=0$.
Moreover
\[
\frac{-a_n}{r_n}= 1-\frac{b_n}{r_n} \leq 1-\frac{c}{r_n}
\]
which again is bounded from above.

Therefore, we are in the situation of Theorem~\ref{thm:main1}.
\end{proof}

\begin{remark} If we do not work with van Hove sequences of intervals, there are many other auto-correlations which are possible. Indeed, let $Q \subseteq \PP$ be any finite set.

Then, $(A_n)_n = ([n!, n!+n] \cup Q)_n$ is a van Hove sequence and $F_n=A_n \cap \PP= Q$. Comparing to Remark \ref{rem:primes_can_have_0_diffraction}, it follows that  
\[
\gamma_{\operatorname{count}}=\frac{1}{\card(Q)}\sum_{p,q \in Q} \delta_{p-q}, \,\qquad \, \reallywidehat{\gamma_{\operatorname{count}}}(x)=\frac{1}{\card(Q)}\sum_{p \in Q} \left|e^{2 \pi i (q-p)x }\right|^2 \,.
\]
\end{remark}

By combining Proposition \ref{prop:no_autocorrelation_shift} and Theorem~\ref{thm:primes_counting_diffraction} we also get the following example:

\begin{example} Let $k$ be a nonzero integer { and consider the set
\[
\PP\cup (k+\PP) = \{p, p+k \;\mid\; p \in \PP \}\, .
\]}
Then, with respect to the van Hove sequence $([-n,n])_n$, $\gamma_{\operatorname{count}}=\delta_0+\frac{1}{2}(\delta_{k}+\delta_{-k}) $ and so
\[
\widehat{\gamma_{\operatorname{count}}}=(1+\cos(2\pi k x)) \lambda \,.
\]
\end{example}

\subsection{Diffraction of prime powers}

Consider now the set 
{\[
\PP_{\operatorname{pow}} := \{ \pm p^n : p \in \PP, n \in \NN \}
\]}
of positive and negative prime powers. The point sets $\PP \subseteq \PP_{\operatorname{pow}} \subseteq \RR$ and $(A_n)_n=([-n,n])_n$ satisfy the conditions of Lemma~\ref{lemma:subset}. 

Indeed, a standard computation (see for example \cite{Sam}) shows that for all $n$ we have 
\begin{align*}
\card(\PP_{\operatorname{pow}} \cap [-n,n])&= 2 \card(\PP_{\operatorname{pow}} \cap [0,n]) = 2( \pi_0(n)+ \pi_0(\sqrt{n})+\pi_0(\sqrt[3]{n})+ \ldots +\pi_0(\sqrt[m]{n})) \,.
\end{align*}
where $m=\lfloor \log_2(n) \rfloor+1$ since $n < 2^{m+1}$.
This implies that 
\begin{align*}
0 \leq  \frac{\card(\PP_{\operatorname{pow}} \cap [-n,n])}{ \card(\PP \cap [-n,n])} -1 &\leq \frac{\pi_0(\sqrt{n})+\pi_0(\sqrt[3]{n})+ \ldots +\pi_0(\sqrt[m]{n})}{ \pi_0(n)} \\
&\leq \frac{\sqrt{n}+\sqrt[3]{n}+ \ldots +\sqrt[m]{n}}{ \pi_0(n)} \leq \frac{(\lfloor \log_2(n) \rfloor+1) \cdot \sqrt{n}}{\pi_0(n)} \,.
\end{align*}
Therefore, by the Prime Number Theorem, we get 
\[
\lim_{n \to \infty} \frac{\card(\PP_{\operatorname{pow}} \cap [-n,n])}{ \card(\PP \cap [-n,n])}=1 \,.
\]

Lemma~\ref{lemma:subset} and Theorem~\ref{thm:main1} then give:
\begin{proposition} Let $(A_n)_n=([a_n,b_n])_n$ be any van Hove sequence of intervals. Assume that there exists some constant $d>0$ such that 
\[
|a_n| < d(b_n-a_n) \,.
\]
Then, with respect to $(A_n)_n$ the set $\PP_{\operatorname{pow}}$ of powers of primes has counting autocorrelation $\gamma_{\operatorname{count}}=\delta_0$ and diffraction $\reallywidehat{\gamma_{\operatorname{count}}}=\lambda$.
\end{proposition}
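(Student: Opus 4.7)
The strategy is to reduce the statement for $\PP_{\operatorname{pow}}$ to the corresponding statement for $\PP$ (Theorem~\ref{thm:main1}) by applying Lemma~\ref{lemma:subset} with $\varLambda = \PP \subseteq \PP_{\operatorname{pow}} = \varGamma$. Both sets are uniformly discrete subsets of $\ZZ$, and Theorem~\ref{thm:main1} already supplies the counting autocorrelation $\gamma_{\operatorname{count},\PP} = \delta_0$ along $(A_n)_n$. The only substantive thing to verify is the hypothesis (c) of Lemma~\ref{lemma:subset}, namely
\[
\lim_{n \to \infty} \frac{\card(\PP \cap A_n)}{\card(\PP_{\operatorname{pow}} \cap A_n)} = 1.
\]

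To prove this ratio condition, I would first invoke Lemma~\ref{lem:flip} to assume without loss of generality that $|b_n| \geq |a_n|$, which together with the hypothesis $|a_n| < d(b_n - a_n)$ gives $0 < b_n \leq (d+1) r_n$ where $r_n := b_n - a_n$. Next, estimate the ``extra'' elements of $\PP_{\operatorname{pow}} \setminus \PP$ in $A_n$: these are of the form $\pm p^k$ with $k \geq 2$ and $|p^k| \leq b_n$, whose count is bounded by
\[
2 \sum_{k=2}^{\lfloor \log_2 b_n \rfloor + 1} \pi_0(b_n^{1/k}) \;=\; O\!\left(\sqrt{b_n}\right) \;=\; O\!\left(\sqrt{r_n}\right).
\]

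The main technical step is then a uniform lower bound $\card(\PP \cap A_n) \geq c \, r_n/\log r_n$. This follows from the same case analysis used in the proof of Theorem~\ref{thm:main1}: if eventually $a_n > 0$, then $b_n/a_n \geq d+1$ and $\card(\PP \cap A_n) = \pi(b_n) - \pi(a_n) \geq \pi(b_n) - \pi(b_n/(d+1))$, which by the Prime Number Theorem is asymptotically $\tfrac{d}{d+1} \cdot b_n/\log b_n \gtrsim r_n/\log r_n$ since $b_n \geq r_n$; if eventually $a_n \leq 0$, then $\card(\PP \cap A_n) \geq \pi(b_n)$ with $b_n \geq r_n/2$, giving the same bound; the mixed case splits into these two subsequences. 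Combining the upper bound $O(\sqrt{r_n})$ with this lower bound yields
\[
0 \;\leq\; \frac{\card(\PP_{\operatorname{pow}} \cap A_n)}{\card(\PP \cap A_n)} - 1 \;\leq\; \frac{O(\sqrt{r_n})}{c \, r_n/\log r_n} \;=\; O\!\left(\frac{\log r_n}{\sqrt{r_n}}\right) \;\to\; 0,
\]
which is equivalent to the required limit.

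With hypothesis (c) verified, and hypotheses (a)--(b) of Lemma~\ref{lemma:subset} following directly from Theorem~\ref{thm:main1} and the nonemptiness of $\PP \cap A_n$ for large $n$, the lemma delivers $\gamma_{\operatorname{count},\PP_{\operatorname{pow}}} = \gamma_{\operatorname{count},\PP} = \delta_0$, and Fourier transforming gives $\widehat{\gamma_{\operatorname{count}}} = \lambda$. The only genuine obstacle is marshalling the case analysis to get the lower bound on $\card(\PP \cap A_n)$ uniformly under the condition $|a_n| < d \, r_n$; this is essentially a repackaging of the bounds already appearing in Theorem~\ref{thm:main1}, and no new number-theoretic input is needed.
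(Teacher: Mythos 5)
Your proposal is correct and follows essentially the same route as the paper: reduce to the primes via Lemma~\ref{lemma:subset} and Theorem~\ref{thm:main1}, the only real work being the verification that $\card(\PP_{\operatorname{pow}}\cap A_n)/\card(\PP\cap A_n)\to 1$ by comparing the count of higher prime powers, $O(\sqrt{b_n})$, against the number of primes in $A_n$. If anything you are slightly more thorough than the paper, whose displayed ratio computation is carried out only for $(A_n)_n=([-n,n])_n$; your case analysis giving the uniform lower bound $\card(\PP\cap A_n)\gtrsim r_n/\log r_n$ under the hypothesis $|a_n|<d(b_n-a_n)$ is exactly what is needed to justify the statement for general van Hove sequences of intervals.
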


\subsection{Primes with fixed distance}

We complete the section on primes by discussing the diffraction of twin primes and other similar sets. While we cannot explicitly calculate the diffraction of the twin primes, as the answer would need us first to settle the twin prime conjecture, we can show that the diffraction of the twin primes is different from the diffraction of the primes. 

Fix a positive integer $d \geq 1$. Define
\[
\PP_d:=\{ p, p+d : p, p+d \in \PP  \} \,.
\]
When $d=2$, the set $\PP_2$ is exactly the set of twin primes.

\begin{proposition}\label{Prop:diffTP} Let $d \geq 1$ be any positive integer. Then,
\begin{itemize}
\item[(a)] If $\PP_d$ is non-empty and finite then the autocorrelation $\gamma_{\operatorname{count}}$ exists with respect to $(A_n)_n=([-n,n])_n$ and  
\begin{align*}
\gamma_{\operatorname{count}} &= \frac{1}{\card(\PP_d)} \sum_{p,q \in \PP_d} \delta_{p-q} \neq \delta_0, \\
\reallywidehat{\gamma_{\operatorname{count}}}&=\frac{1}{\card(\PP_d)} \sum_{p,q \in \PP_d} e^{2 \pi i (p-q)x } \neq \lambda. 
\end{align*}
Moreover, 
\[
\frac{1}{2} \leq \gamma_{\operatorname{count}}(\{d \}) 
\]
with equality if and only if $p,p+d, p+2d$ cannot be prime at the same time. 

\item[(b)] Assume that $\PP_d$ is infinite and $d$ is not divisible by $3$. Let $\gamma_{\operatorname{count}}$ be the counting autocorrelation of $\PP_d$ with respect to a subsequence of $([-n,n])_n$. Then, 
\begin{align*}
\gamma_{\operatorname{count}}(\{d\}) &= \frac{1}{2}, \\
\reallywidehat{\gamma_{\operatorname{count}}}& \neq \lambda .
\end{align*}
\end{itemize}
\end{proposition}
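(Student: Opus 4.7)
The plan is to exploit the decomposition $\PP_d = S_d \cup (d + S_d)$ where $S_d := \{p \in \PP : p + d \in \PP\}$, together with the observation that $\{q \in \PP_d : q + d \in \PP_d\} = S_d$. Indeed, if $q \in S_d$ then both $q$ and $q + d$ are prime, so $q + d \in d + S_d \subseteq \PP_d$, while if $q \in (d + S_d) \setminus S_d$ then $q + d$ is not prime and hence not in $\PP_d$. Inclusion-exclusion gives $\card(\PP_d) = 2|S_d| - |S_d \cap (d + S_d)|$, and $p \in S_d \cap (d + S_d)$ amounts to $p - d, p, p + d$ all being prime, which (after the shift $p \mapsto p - d$) is precisely the forbidden triple $p, p + d, p + 2d$.

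For part (a), $\PP_d$ finite and nonempty means $F_n = \PP_d$ for every $n \geq \max\{|p| : p \in \PP_d\}$, so $\gamma^{}_{F^{}_n}$ stabilizes at $\gamma^{}_{\PP_d}$ and the stated formulas for $\gamma_{\operatorname{count}}$ and $\widehat{\gamma_{\operatorname{count}}}$ follow immediately from the finite Patterson formula. The combinatorial identity above then yields $\gamma_{\operatorname{count}}(\{d\}) = |S_d|/(2|S_d| - |S_d \cap (d + S_d)|) \geq 1/2$, with equality exactly when $S_d \cap (d + S_d) = \emptyset$, matching the no-prime-triple condition. Since $d \geq 1$ gives $\gamma_{\operatorname{count}}(\{d\}) \geq 1/2 > 0 = \delta_0(\{d\})$, we have $\gamma_{\operatorname{count}} \neq \delta_0$, and Fourier uniqueness gives $\widehat{\gamma_{\operatorname{count}}} \neq \widehat{\delta_0} = \lambda$.

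For part (b), $\PP_d \subseteq \ZZ$ has FLC, and $\PP_d$ infinite forces $\card(F_n) \to \infty$, so Lemma~\ref{lem:gammacount_ud} applies along the subsequence and
\[
\gamma_{\operatorname{count}}(\{d\}) = \lim_n \frac{|S_d \cap A_n|}{\card(\PP_d \cap A_n)}.
\]
Expanding the denominator by inclusion-exclusion, the term $|(d + S_d) \cap A_n| = |S_d \cap (A_n - d)|$ differs from $|S_d \cap A_n|$ by at most $O(d) = O(1)$ primes lying in the two boundary intervals of length $d$ at the ends of $A_n$. The decisive step is that the hypothesis $3 \nmid d$ forces $|S_d \cap (d + S_d)|$ to be \emph{finite}: if $p - d, p, p + d$ are all prime then, since $3 \nmid d$, their residues mod $3$ form a complete system, so one of them is $\equiv 0 \pmod 3$ and hence equals $\pm 3$, leaving only finitely many such $p$. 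Thus $\card(\PP_d \cap A_n) = 2|S_d \cap A_n| + O(1)$ with $|S_d \cap A_n| \to \infty$, and the limit is $1/2$. As in (a), $\widehat{\gamma_{\operatorname{count}}} \neq \lambda$ follows because $\gamma_{\operatorname{count}}(\{d\}) = 1/2 \neq 0 = \delta_0(\{d\})$.

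The only substantive step is the mod-$3$ argument bounding the prime triples; the rest of the proof is a combinatorial reorganization of $\PP_d$ plus routine van Hove boundary estimates. The potential pitfall is that for $d$ divisible by $3$ one loses the \emph{a priori} control on $|S_d \cap (d + S_d)|$, which is why the hypothesis $3 \nmid d$ is needed in part (b).
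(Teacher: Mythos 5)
Your proposal is correct and follows essentially the same route as the paper: the decomposition $\PP_d = \varLambda \cup (d+\varLambda)$ with $\varLambda = \{p \in \PP : p+d \in \PP\}$, the inclusion--exclusion bound $\card(\PP_d) \leq 2\card(\varLambda)$ giving $\gamma_{\operatorname{count}}(\{d\}) \geq \tfrac12$ with the stated equality condition, and the mod-$3$ argument forcing $\varLambda \cap (d+\varLambda)$ to be finite when $3 \nmid d$. The only difference is cosmetic: where the paper defers the final limit computation in (b) to ``a short computation similar to Proposition~\ref{prop:no_autocorrelation_shift},'' you carry it out explicitly via Lemma~\ref{lem:gammacount_ud} and the $O(1)$ boundary estimate, which is a perfectly valid instantiation of the same idea.
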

\begin{proof}
Let us define
\[
\varLambda := \{ p \in \PP_d : p+d \in \PP_d \} \,.
\]
and note that 
\begin{equation}\label{eq:ppd}
\PP_d = \varLambda \cup (d+\varLambda) \,.
\end{equation}

\textbf{(a)} Since $\PP_d$ is finite, we have 
\[
F_n= \PP_d \cap [-n,n] = \PP_d
\]
for $n$ large enough. The two formulas are then immediate. 

Next, \eqref{eq:ppd} implies that
\[
\card(\PP_d) \leq 2 \card (\varLambda)
\]
with equality if and only if $\varLambda \cap (d+\varLambda) = \emptyset$.
Now, 
\begin{align*}
\gamma_{\operatorname{count}}(\{d\}) &= \frac{1}{\card(\PP_d)} \sum_{p,q \in \PP_d} \delta_{p-q}(d) \\
&= \frac{1}{\card(\PP_d)} \sum_{q,q+d \in \PP_d} 1=  \frac{\card(\varLambda)}{\card(\PP_d)} \geq \frac{1}{2}
\end{align*}
with equality if and only if $\varLambda \cap(p+\varLambda)=\emptyset$. This is in turn equivalent to saying that $p,p+d,p+2d$ are never primes at the same time.

\textbf{(b)}
Next, let us note that if 
\[
p \in \varLambda \cap (d+\varLambda)
\]
then $p,p+d, p+2d \in \PP$. Since $3 \nmid d$, one of $p,p+d, p+2d$ must be divisible by $3$. This shows that 
\[
\varLambda \cap (d+\varLambda) \subseteq \{-3-2d, -3-d, -3, 3, 3-d, 3-2d \}
\]
and hence $\varLambda \cap (d+\varLambda)$ is finite.

A short computation similar to the proof of Proposition~\ref{prop:no_autocorrelation_shift} shows that
\[
\gamma_{\operatorname{count}}(\{d\}) = \frac{1}{2} \,.
\]
The claim follows.
\end{proof}

Since $3,5,7 \in \PP$ we obtain:

\begin{corollary} Let $\gamma_{\operatorname{count}}$ be any counting autocorrelation of the twin primes $\PP_2$ with respect to a subsequence of $([-n,n])_n$. Then, $\gamma_{\operatorname{count}}(\{2\})=\frac{1}{2}$ if and only if there are infinitely many twin primes.
\end{corollary}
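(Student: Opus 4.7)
The proof is a direct case analysis using Proposition~\ref{Prop:diffTP} with $d=2$, together with the observation that $3,5,7$ are all prime.

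First, suppose the twin prime conjecture holds, i.e.\ $\PP_2$ is infinite. Since $2$ is not divisible by $3$, Proposition~\ref{Prop:diffTP}(b) applies to every counting autocorrelation $\gamma_{\operatorname{count}}$ obtained along a subsequence of $([-n,n])_n$, giving immediately $\gamma_{\operatorname{count}}(\{2\}) = \tfrac{1}{2}$.

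Conversely, suppose $\PP_2$ is finite. Since $3,5 \in \PP$ we have $\PP_2 \neq \emptyset$, so Proposition~\ref{Prop:diffTP}(a) applies. This gives $\gamma_{\operatorname{count}}(\{2\}) \geq \tfrac{1}{2}$, with equality if and only if there is no prime $p$ for which $p, p+2, p+4$ are simultaneously prime. The triple $(3,5,7)$ is a witness that this equality condition fails, hence $\gamma_{\operatorname{count}}(\{2\}) > \tfrac{1}{2}$ strictly.

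Taking the contrapositive of the second case and combining with the first yields the biconditional. There is no real obstacle here, since the heavy lifting has already been done in Proposition~\ref{Prop:diffTP}; the only content-bearing remark is the use of the triple $(3,5,7)$ to rule out the equality case when $\PP_2$ is finite. Note that the argument would fail for $d \in \{4,6,\ldots\}$ not divisible by $3$ unless one can exhibit an explicit arithmetic progression $p, p+d, p+2d$ of three primes, which is always possible for small $d$ but is what makes this an ``if and only if'' statement peculiar to the interplay between the finite and infinite cases in Proposition~\ref{Prop:diffTP}.
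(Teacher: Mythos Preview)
Your proof is correct and takes essentially the same approach as the paper: the corollary is stated immediately after the observation ``Since $3,5,7\in\PP$ we obtain'', and your argument spells out exactly how Proposition~\ref{Prop:diffTP}(a) and (b) combine with the triple $(3,5,7)$ to give both directions of the biconditional. The closing paragraph of commentary is extraneous but harmless.
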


\begin{remark} \
\begin{itemize}
\item[(a)] The diffraction of the set $\PP_2$ of twin primes is not $\lambda$.
\item[(b)] When $\PP_d$ is finite then the diffraction is absolutely continuous. It is unclear to us what happens when $\PP_d$ is infinite, for example, for the values of $d$ from \cite{Zhang,Polymath,Maynard}. 

The above result shows that, as long as $d$ is not a multiple of $3$, if $\PP_d$ is infinite, then $\PP_d$ and $\PP$ have different diffraction. It is reasonable to expect that for infinite $\PP_d$, the set $\PP_d$ and $\PP$ have different diffraction even when $3|d$, but the above proof does not work anymore. To make the proof work, one would need to show that the set of primes $p$ for which both $p+d$ and $p+2d$ are also prime has asymptotic density zero inside $\PP_d$, which looks reasonable but extremely hard to show.
\end{itemize}
\end{remark}

\section{Subsets of zero density with non-trivial Bragg spectrum }\label{Sect:last}

All the examples we have seen so far have continuous counting diffraction spectrum. We show below that this is not always the case. The following result allows us to construct sets of density zero whose counting diffraction spectrum can be the density diffraction of any relatively dense set that we want.

\begin{theorem}\label{thm:density_to_counting} Let $\varLambda \subseteq \ZZ$ be relatively dense, and assume that the density autocorrelation $\gamma_{\operatorname{dens}}=\gamma$ of $\varLambda$ exists with respect to $\cA = (A_n)_n=([-n,n])_n$.

For each { $n \geq 3$}, set 
\[
\varGamma_n= n!+ \left( \varLambda \cap [-n,n] \right)
\]
and define
\[
\varGamma:= \bigcup_{ n \geq 3} \varGamma_n,
\]
which is a disjoint union. Then, with respect to $\cA$ we have
\begin{itemize}
    \item[(a)] $\dens(\varGamma)=0$, and
    \item[(b)] $\gamma_{\operatorname{count}}= C \gamma$ where 
\[
C= \frac{1}{\gamma(\{0\})} =\frac{1}{\dens(\varLambda)} \neq 0 \,.
\]
\end{itemize}
\end{theorem}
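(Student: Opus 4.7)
The plan is to exploit the factorial spacing of the blocks $\varGamma_k$: the counting autocorrelation on $F_n := \varGamma \cap A_n$ decouples into contributions from the individual blocks, and each fully contained block contributes a ``sample'' of the density autocorrelation of $\varLambda$. First, $\varGamma_k \subseteq [k!-k,\, k!+k]$, so for $k \geq 3$ these intervals are pairwise disjoint and the union defining $\varGamma$ is disjoint. Let $L(n)$ be the largest $k \geq 3$ with $\varGamma_k \subseteq A_n$; since $k! + k \leq n$ forces $k! = O(n)$, Stirling gives $L(n) = O(\log n / \log\log n)$. For part (a), $\card(F_n) \leq \sum_{k=3}^{L(n)+1}(2k+1) = O(L(n)^2) = o(n)$, so $\dens_{\cA}(\varGamma) = 0$.

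For (b), fix $f \in \Cc(\RR)$ with $\supp(f) \subseteq [-M,M]$, write $G_k := \varLambda \cap [-k,k]$, and set $\gamma_k := \frac{1}{2k}\delta_{G_k}\ast\widetilde{\delta_{G_k}}$, so that $\gamma_k \to \gamma$ vaguely by hypothesis. Consecutive blocks are separated by at least $(k+1)! - k! - 2(k+1) = k\cdot k! - 2k - 3$, which exceeds $M$ for all large $k$; hence, for $n$ large enough, any two distinct blocks $\varGamma_j, \varGamma_k \subseteq A_n$ lie more than $M$ apart and contribute nothing to cross-block pairs. For each fully contained block $\varGamma_k = k! + G_k \subseteq A_n$, translation invariance gives
\[
\sum_{x,y \in \varGamma_k} f(x-y) \;=\; \sum_{x,y \in G_k} f(x-y) \;=\; 2k\,\gamma_k(f),
\]
and in particular $\card(G_k) = 2k\,\gamma_k(\{0\})$, where $\gamma_k(\{0\}) \to d := \dens(\varLambda) > 0$ by Lemma~\ref{lem:ac-at-0} and the relative denseness of $\varLambda$.

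At most one partial boundary block $\varGamma_{L(n)+1}$ intersects $A_n$, with cardinality $\leq 2(L(n)+1)+1 = O(L(n))$. Since $\varLambda \subseteq \ZZ$, each point has at most $2M+1$ integer neighbours within distance $M$, so the partial block contributes at most $\|f\|_\infty(2M+1)\cdot O(L(n))$ to $\sum_{x,y \in F_n}f(x-y)$ and at most $O(L(n))$ to $\card(F_n)$; both are negligible against the interior totals, which are of order $L(n)^2$. Applying the weighted Ces\`aro lemma (if $a_k \to a$ and $w_k > 0$ with $\sum w_k \to \infty$ then $\sum_{k=1}^N w_k a_k / \sum_{k=1}^N w_k \to a$) with weights $w_k = 2k$ to the convergences $\gamma_k(f) \to \gamma(f)$ and $\gamma_k(\{0\}) \to d$ yields
\[
\gamma^{}_{F^{}_n}(f) \;=\; \frac{\sum_{x,y \in F_n}f(x-y)}{\card(F_n)} \;=\; \frac{\sum_{k=3}^{L(n)} 2k\,\gamma_k(f) + O(L(n))}{\sum_{k=3}^{L(n)} 2k\,\gamma_k(\{0\}) + O(L(n))} \;\longrightarrow\; \frac{\gamma(f)}{d} \;=\; C\gamma(f),
\]
proving $\gamma_{\operatorname{count}} = C\gamma$ with $C = 1/\dens(\varLambda)$. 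The main obstacle is bookkeeping the partial boundary block: although it may have cardinality comparable to a single interior block, it is only $O(1/L(n))$ of the cumulative interior count of order $L(n)^2$, and uniform discreteness of $\varLambda$ likewise controls its contribution to the autocorrelation sum, so both become negligible in the ratio.
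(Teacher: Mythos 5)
Your argument is correct in substance and shares the paper's core mechanism: each block $\varGamma_k$ is a translate of $\varLambda\cap[-k,k]$, hence contributes a weight-$2k$ sample of the density autocorrelation of $\varLambda$, and the limit is a Ces\`aro-type average over blocks with the boundary block controlled separately. The packaging is genuinely different, though. The paper fixes $t\in\varGamma-\varGamma$, sandwiches $\card(F_n\cap(-t+\varGamma))/\card(F_n)$ between two ratios of partial sums of $c_k=\card(\varLambda_k\cap(-t+\varLambda))$ and $\ell_k=\card(\varLambda_k)$, concludes by Stolz--Ces\`aro and the squeeze theorem, and then assembles the pointwise limits into a measure via Lemma~\ref{lem:gammacount}; you instead test $\gamma^{}_{F^{}_n}$ directly against $f\in\Cc(\RR)$ and invoke the weighted Ces\`aro (Toeplitz) lemma, which bypasses the FLC machinery of Lemma~\ref{lem:gammacount} entirely and establishes vague convergence in one stroke, at the cost of slightly heavier error bookkeeping. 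Both routes are sound; yours is marginally more self-contained, the paper's more modular.

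One statement needs repair. It is not true that for $n$ large every pair of distinct blocks contained in $A_n$ is separated by more than $M$: for fixed small $j<k$ the blocks $\varGamma_j$ and $\varGamma_k$ sit at a fixed finite distance and both lie in $A_n$ for all large $n$, so if $M$ exceeds that distance they do produce cross-block pairs. However, since the gap between consecutive blocks is $k\cdot k!-2k-1$ (not $k\cdot k!-2k-3$, a harmless slip), only the finitely many pairs with $k\le K_0(M)$ can interact, and these contribute a bounded number of cross terms, i.e.\ $O(1)$ to the numerator, which is absorbed into your $O(L(n))$ error against the interior totals of order $L(n)^2$. With that adjustment the proof goes through.
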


\begin{proof}
{
Let us note first that for $n \geq 3$ we have 
\[
n! +n < (n+1)!-(n+1)
\]
showing that $\Gamma$ is indeed a disjoint union. 

}
Let $\varLambda_n=\varLambda \cap A_n=\varLambda \cap [-n,n]$, so that $\varGamma_n=n!+\varLambda_n$ and $\card(\varGamma_n)=\card(\varLambda_n)\le 2n+1$. Denote $F_n=\varGamma \cap A_n$.

\bigskip

\textbf{(a)} Let $n\in \NN$, and let $m$ be the unique natural number for which
\[
m!\le n<(m+1)!.
\]
Then
$
F_n\subseteq
\bigcup_{{k=3}}^{m+1} \varGamma_k
$,
and so
\begin{align*}
\frac{\card(F_n)}{2n} &=
\frac{1}{2n}\sum_{{k=3}}^{m+1} \card(\varGamma_k\cap [-n,n])\\&\le
\frac{1}{2n}\sum_{{k=3}}^{m+1}\card(\varGamma_k)  \le
\frac{m(2m+1)}{2n}\le
\frac{m(2m+1)}{2m!}\to 0
\end{align*}
as $m\to \infty$. As $n\to \infty$, we also have $m\to \infty$, and so therefore
\[
\dens_\cA(\varGamma)=
\lim_{n \to \infty}\frac{\card(F_n)}{2n}=0.
\]

\bigskip

\textbf{(b)} Let $t\in \ZZ$. For readability, introduce the notation
\begin{align*}
\ell_n &= \card(\varLambda_n)=\card(\varGamma_n),\quad\text{and} \\
c_n &=
\card(\varLambda_n\cap (-t+\varLambda)).
\end{align*}
Then, by Lemma \ref{lem:gammadens}, we know that
\begin{align*}
\gamma(\{0\})=
\dens_\cA(\varLambda) =
\lim_{n \to \infty}\frac{\ell_n}{2n} \ne 0 \quad\text{and}\quad
\gamma(\{t\}) =
\lim_{n \to \infty}\frac{c_n}{2n}.
\end{align*}
Our goal is to show that
\[
\lim_{n \to \infty}\frac{\card(F_n\cap(-t+\varGamma))}{\card(F_n)} =
\frac{\gamma(\{t\})}{\gamma(\{0\})} =
\lim_{n \to \infty}\frac{c_n}{\ell_n},
\]
and then the result follows from Lemma \ref{lem:gammacount}. We will accomplish this by showing that $\frac{\card(F_n\cap(-t+\varGamma))}{\card(F_n)}$ is asymptotically a ratio of the C\'{e}saro means for $(c_n)$ and $(\ell_n)$.

As in the proof of (a), let $n\in \NN$ and let $m\in \NN$ be the unique number satisfying $m!\le n\le (m+1)!$. Then
\[
\bigcup_{{k=3}}^m \varGamma_k \subseteq F_n\subseteq \bigcup_{{k=3}}^{m+1} \varGamma_k,
\]
with these unions disjoint, and so
\[
\sum_{{k=3}}^m \ell_k \le
\card(F_n)\le
\sum_{{k=3}}^{m+1} \ell_k.
\]

Eventually, as $k$ becomes large, $k!$ is large enough that $t+\varGamma_k$ intersects $\varGamma$ only in $\varGamma_k$. That is, for some constant $N$ depending only on $t$, if $k\ge N$, then
\[
\varGamma_k\cap (-t+\varGamma)=
\varGamma_k\cap (-t+\varGamma_k),
\]
which has cardinality $
\card(\varGamma_k\cap (-t+\varGamma_k)) =
\card(\varLambda_k\cap (-t+\varLambda_k))=c_k.
$
(In fact, we could take $N=|t|$, though a smaller number will typically work.)
Now, assume that $n$ is large enough that $m\ge N$, and estimate
\begin{align*}
\card(F_n\cap (-t+\varGamma)) &\ge
\sum_{{k=3}}^m \card(\varGamma_k\cap (-t+\varGamma)) \\ &\ge 
\sum_{k=N}^m \card(\varGamma_k\cap (-t+\varGamma)) \\ &=
\sum_{k=N}^m c_k =
\sum_{{k=3}}^{m} c_k-C,
\end{align*}
where $C=\sum_{{k=3}}^{N-1}c_k$ is a constant not depending on $n$ (only on $\varLambda$ and $t$). And, for an upper bound,
\begin{align*}
\card(F_n\cap (-t+\varGamma)) &\le
\sum_{{k=3}}^{m+1} \card(\varGamma_k\cap (-t+\varGamma)) \\ &=
\sum_{{k=3}}^{N-1}\card(\varGamma_k\cap(-t+\varGamma)) +
\sum_{k=N}^{m+1} \card(\varGamma_k\cap(-t+\varGamma)) \\ &\le
\sum_{{k=3}}^{N-1}\card(\varGamma_k) +
\sum_{k=N}^{m+1} \card(\varGamma_k\cap(-t+\varGamma_k)) \\ &=
\sum_{{k=3}}^{N-1} \ell_k+\sum_{k=N}^{m+1} c_k \;\le\;
\sum_{{k=3}}^{m+1}c_k+D,
\end{align*}
where $D=\sum_{{k=3}}^{N-1}(\ell_k-c_k)$ is a constant not depending on $n$. Combining all of our bounds so far shows that for large enough $n$,
\begin{equation}\label{eq:F_n_bounds}
\frac{\sum_{{k=3}}^m c_k-C}{\sum_{{k=3}}^{m+1}\ell_k} \le
\frac{\card(F_n\cap (-t+\varGamma))}{\card(F_n)}\le
\frac{\sum_{{k=3}}^{m+1}c_k+D}{\sum_{{k=3}}^m \ell_k}.
\end{equation}

Now, because $\varLambda_n$ is relatively dense, there must be a constant $R>0$ so that
\[
Rn\le
\ell_n \le
2n+1
\]
for all $n$. So, 
\begin{align*}
\frac{\sum_{{k=3}}^{m+1}\ell_k}{\sum_{{k=3}}^m \ell_k} -1=
\frac{\ell_{m+1}}{\sum_{{k=3}}^m\ell_k} \le\frac{2m+1}{\sum_{{k=3}}^m Rk}  &=
\frac{2m+1}{R\frac{m(m+1)}{2}-R} \to 0
\end{align*}
as $m\to \infty$, so
\begin{equation}\label{eq:sum_ell_asymptotic}
\lim_{m\to \infty}\frac{\sum_{{k=3}}^{m+1}\ell_k}{\sum_{{k=3}}^m \ell_k} =1.
\end{equation}

Now, as $n\to \infty$, $m\to \infty$, Equation \eqref{eq:sum_ell_asymptotic} gives
\begin{align*}
\lim_{m\to \infty}\frac{\sum_{{k=3}}^m c_k-C}{\sum_{{k=3}}^{m+1}\ell_k} &=
\lim_{m\to \infty}\frac{\sum_{{k=3}}^m c_k-C}{\sum_{{k=3}}^{m}\ell_k} =
\lim_{m\to\infty}\frac{c_m}{\ell_m},
\end{align*}
by the Stolz-C\'{e}saro Theorem. Similarly,
\begin{align*}
\lim_{m\to\infty}\frac{\sum_{{k=3}}^{m+1}c_k+D}{\sum_{{k=3}}^m\ell_k} &=
\lim_{m\to\infty}\frac{\sum_{{k=3}}^{m+1}c_k+D}{\sum_{{k=3}}^{m+1}\ell_k} =
\lim_{m\to\infty}\frac{c_{m+1}}{\ell_{m+1}}=
\lim_{m\to\infty}\frac{c_m}{\ell_m}.
\end{align*}
Now, the Squeeze Theorem applied to Equation \eqref{eq:F_n_bounds} yields
\[
\eta_{\operatorname{count}}(t) =\lim_{n\to\infty}\frac{\card(F_n\cap (-t+\varGamma))}{\card(F_n)} =
\lim_{m\to\infty}\frac{c_m}{\ell_m} =
\frac{\gamma(\{t\})}{\gamma(\{0\})}.
\]
Lemma \ref{lem:gammacount} now shows that $\gamma_{\operatorname{count}}$ exists for $\varGamma$ with respect to $\cA$, and it is $\gamma_{\operatorname{count}} = \frac{1}{\gamma(\{0\})}\gamma$.
\end{proof}

\begin{remark} The conclusion of Theorem~\ref{thm:density_to_counting} holds if in the definition of $\varGamma_n$, the sequence $(n!)_n$ is replaced by any increasing sequence $(a_n)_n$ of natural numbers with the property that 
\[
\lim_{n \to \infty} \frac{a_{n+1}-a_n}{n^2}=\infty \,.
\]
The proof in this more general situation is a straightforward modification of the proof of Theorem \ref{thm:density_to_counting}.
\end{remark}

By applying this result to various known examples, we get a list of subsets of $\ZZ$ of density zero covering most spectral types. Note that the primes already provide an example of a set of density zero with a purely absolutely continuous spectrum; the only spectral type currently missing is purely singular continuous. Such an example cannot be produced by the method of this section, as a relatively dense subset of $\ZZ$ always has a trivial Bragg peak at the origin.

\begin{example} Let $\varLambda =\ZZ$ and let 
\[
\varGamma= \bigcup_{{k=3}}^\infty \{ n!-n,n!-n+1, \ldots, n!+n \} \,.
\]
Then $\varGamma$ has 
\begin{align*}
\gamma_{\operatorname{count}}&= \delta_{\ZZ} \quad\text{and} \\
\widehat{\gamma_{\operatorname{count}}}&= \delta_{\ZZ},
\end{align*}
which are the density autocorrelation and diffraction of $\ZZ$, respectively.
\end{example}


\begin{example}\label{ex4.6} Let $\varLambda$ be the positions of a's in the Thue--Morse substitution (see \cite[Sect.~4.6]{TAO}) and let  
\[
\varGamma= \bigcup_{{k=3}}^\infty \; n!+ (\varLambda \cap [-n,n]).
\]
{ Let $\gamma_{\operatorname{TM}}$ and $\omega_{\operatorname{TM}}$ be the density autoorrelation and diffraction of the $\pm 1$ weighted Thue--Morse comb. Then, by \cite[Thm.~10.1]{TAO}, $\omega$ is a $\ZZ$-periodic purely singular continuous measure.

Next, by \cite[Rem.~10.3]{TAO} we have 
\[
\gamma_{\operatorname{dens}}= \frac{1}{4}\delta_{\ZZ}+\frac{1}{4}\gamma_{\operatorname{TM}}\,.
\]
since the a's in the Thue--Morse comb have density $\frac{1}{2}$ we have $\gamma_{dens}(\{0\}) = \frac{1}{2}$, and hence
\begin{align*}
\gamma_{\operatorname{count}}&= \frac{1}{2}  \delta_{\ZZ}+\frac{1}{2}\gamma_{\operatorname{TM}} \\
\widehat{\gamma_{\operatorname{count}}}&= \frac{1}{2}\delta_{\ZZ}+ \frac{1}{2}\omega_{\operatorname{TM}} \,.
\end{align*}}

\end{example}

\begin{example} Let $\varLambda$ be the positions of a's in the {binary} Rudin--Shapiro substitution (see \cite[Sect.~4.7]{TAO}) and let  
\[
\varGamma= \bigcup_{{k=3}}^\infty \; n!+ (\varLambda \cap [-n,n])
\]
Then{, similar to the Thue--Morse calculations above,
\begin{align*}
\gamma_{\operatorname{count}}&= \frac{1}{2}\delta_{\ZZ}+ \frac{1}{2}\delta_0  \\
\widehat{\gamma_{\operatorname{count}}}&= \frac{1}{2}\delta_{\ZZ}+ \frac{1}{2}\lambda
\end{align*}}
where $\lambda$ is the Lebesgue measure on $\RR$ (see \cite[Remark 10.5]{TAO}). 

{ Note here that 
\[
\gamma_{\operatorname{count}}(\{0\})=\frac{1}{2}+\frac{1}{2}
\]
as Lemma~\ref{lem:ac-at-0}(b) implies.
}
\end{example}

\begin{example}  Let $\varLambda'$ be the { position of $a$'s in the Thue--Morse substitution}, and let $\varLambda$ be a generic element for the Bernoulisation process on $\varLambda'$ with $p=\frac{1}{2}$ \cite[Subsection 11.2.2]{TAO}. Then, $\varLambda \subseteq \ZZ$ almost surely has diffraction 
\[
\widehat{\gamma_{\operatorname{dens}}}= {\frac{1}{16} \delta_{\ZZ} + \frac{1}{16} \omega + \frac{1}{4}} \lambda
\]
where $\omega$ is the singular continuous measure from Example~\ref{ex4.6} (see \cite[Remark 11.3]{TAO}).
Thm.~\ref{thm:density_to_counting} then produces a set $\varGamma$ of density zero with diffraction spectrum containing all three components.

\end{example}

\section{Embedding point sets in higher dimension}\label{Sect:lastlast}

Let us now briefly look at a different situation where sets of density zero appear naturally.
Let $\varLambda \subseteq \RR^d$ be an uniformly discrete point set. Then we can identify $\varLambda$ with 
$\varLambda \times \{ 0 \} \subseteq \RR^{d+m}$.
Using the standard van Hove sequence $(A_n)_n=([-n,n]^{d+m})_n$ we have 
\[
\dens(\varLambda)=0 \,.
\]
This means that the classical definition of (density) diffraction would always give a diffraction 
\[
\reallywidehat{\gamma_{\operatorname{dens}}}=0 \,.
\]
But now consider an (idealized) multiple-slit interference experiment. This can simply be described by punching  tiny holes into a wall at locations $\varLambda \subseteq \RR$ and doing a 2-dimensional diffraction experiment. The outcome of the experiment is usually a mixture of lights and shadows on a wall, which are periodic in one direction and change in the opposite direction.

We will see below that the counting autocorrelation precisely describes this phenomena. Recall first that given two measures $\mu,\nu$ on $G$ and $H$ respectively, there exists a unique product measure $\mu \times  \nu$ on $G \times H$ with the property that 
\[
\int_{G \times H} f(x) g(y) \dd \mu \times  \nu (x,y) = \left( \int_{G} f(x) \dd \mu(x) \right) \left( \int_{H} g(y) \dd \nu(y) \right), \qquad \forall f \in \Cc(G), g \in \Cc(H) \,.
\]
With this notation, we trivially have
\[
\delta_x \times \delta_y= \delta_{(x,y)} \,.
\]

Let us now show how the counting autocorrelation behaves when we increase the dimension of the underlying space. 

\begin{theorem} Let $\varLambda \subseteq G$ be any point set, and assume that the autocorrelation $\gamma_{\operatorname{count},\varLambda}$ exists with respect to some van Hove sequence $(A_n)_n$. 

Let $H$ be any second countable LCAG and $(B_n)_n$ any van Hove sequence in $H$, such that $0 \in B_n$ for all $n$. Let $C_n =A_n \times B_n \subseteq G \times H$.

Define
\[
\varGamma = \varLambda \times \{ 0 \} \subseteq G \times H \,.
\]
Then, the counting autocorrelation $\gamma_{\operatorname{count},\varGamma}$ of $\varGamma \subseteq G \times H$ exists with respect to $(C_n)_n$ and 
\[
\gamma_{\operatorname{count},\varGamma} = \gamma_{\operatorname{count},\varLambda} \times \delta_0 \,.
\]
In particular, 
\[
\reallywidehat{\gamma_{\operatorname{count},\varGamma}} = \reallywidehat{\gamma_{\operatorname{count},\varLambda}} \times \theta_{\widehat{H}} \,.
\]
where $\theta_{ \widehat{H}}$ is a Haar measure on $\widehat{H}$.
\end{theorem}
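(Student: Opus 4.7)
The plan is to exploit that $0\in B_n$ forces the intersection $\varGamma\cap C_n$ to have the simplest possible form, so that the counting autocorrelation of $\varGamma$ along $(C_n)_n$ factors as a product of the autocorrelation of $\varLambda$ and a Dirac delta at $0\in H$. Write $F_n=\varLambda\cap A_n$. Since $\varGamma = \varLambda\times\{0\}$ and every $B_n$ contains $0$, a point $(x,h)\in G\times H$ lies in $\varGamma\cap C_n$ iff $x\in F_n$ and $h=0$. Hence $\varGamma \cap C_n = F_n\times\{0\}$, and in particular $\card(\varGamma\cap C_n)=\card(F_n)$.

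The next step is the key algebraic identity. Using $\delta_{F_n\times\{0\}}=\delta_{F_n}\times\delta_0$ together with the fact that convolution and the reflection–conjugation tilde both distribute over product measures, we get
\[
\delta_{\varGamma\cap C_n}\ast\widetilde{\delta_{\varGamma\cap C_n}}
\;=\;\bigl(\delta_{F_n}\ast\widetilde{\delta_{F_n}}\bigr)\,\times\,\bigl(\delta_0\ast\widetilde{\delta_0}\bigr)
\;=\;\bigl(\delta_{F_n}\ast\widetilde{\delta_{F_n}}\bigr)\,\times\,\delta_0.
\]
Dividing by $\card(F_n)$ (or observing the empty case trivially) yields $\gamma^{}_{\varGamma\cap C_n}=\gamma^{}_{F_n}\times\delta_0$ for every $n$.

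It then remains to pass this identity to the vague limit. Given any test function $f\in \Cc(G\times H)$, the map $x\mapsto f(x,0)$ lies in $\Cc(G)$ (compactness of $\supp(f)$ restricts to compact support in the $G$-slice at $0$), so
\[
\gamma^{}_{\varGamma\cap C_n}(f) \;=\; \int_{G} f(x,0)\,\dd\gamma^{}_{F_n}(x) \;\longrightarrow\; \int_{G} f(x,0)\,\dd\gamma_{\operatorname{count},\varLambda}(x) \;=\; (\gamma_{\operatorname{count},\varLambda}\times\delta_0)(f),
\]
by hypothesis on $\gamma_{\operatorname{count},\varLambda}$. This establishes existence of $\gamma_{\operatorname{count},\varGamma}$ with respect to $(C_n)_n$ and its stated value.

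For the final assertion about Fourier transforms, apply the standard multiplicativity $\widehat{\mu\times\nu}=\widehat{\mu}\times\widehat{\nu}$ for Fourier transformable measures on product LCAGs, together with $\widehat{\delta_0}=\theta_{\widehat{H}}$ (an immediate consequence of the definition of the Fourier transform of a point mass at the identity, matching the convention used throughout the paper). The main subtle point in the plan is the vague-convergence step: one must be sure the test functions on $G\times H$ genuinely restrict to $\Cc(G)$ at the slice $h=0$ and that no $\widetilde{\delta_0}\neq\delta_0$ issue arises (it does not, since $0=-0$ in $H$). Everything else is bookkeeping with product measures.
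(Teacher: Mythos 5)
Your proposal is correct and follows essentially the same route as the paper: both hinge on the observation that $0\in B_n$ gives $\varGamma\cap C_n=F_n\times\{0\}$, hence $\gamma^{}_{\varGamma\cap C_n}=\gamma^{}_{F^{}_n}\times\delta_0$, after which one passes to the vague limit and applies multiplicativity of the Fourier transform. Your write-up actually spells out the limiting step (testing against $f(\cdot,0)\in\Cc(G)$) more explicitly than the paper, which simply asserts that the rest follows.
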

\begin{proof}
Define 
\begin{align*}
F_n&:= \varLambda \cap A_n\, , \\
E_n&= \varGamma \cap C_n \, .
\end{align*}
Then, since $0 \in B_n$, we have 
\[
E_n = F_n \times \{ 0 \}
\]
and hence 
\[
\delta_{E_n} = \delta_{F_n} \times \delta_0 \,.
\]
It follows immediately that 
\begin{align*}
\gamma_{E_n}&= \frac{1}{\card(E_n)} \delta_{E_n} \ast \widetilde{\delta_{E_n}}= \frac{1}{\card(F_n)} \delta_{F_n \times \{ 0 \}} \ast \widetilde{\delta_{F_n \times \{ 0 \}}} \\
&= \left( \frac{1}{\card(F_n)} \delta_{F_n } \ast \widetilde{\delta_{F_n}} \right) \times \delta_0 = \gamma^{}_{F^{}_n} \times \delta_0 \,,
\end{align*}
from which the rest of the proof follows.
\end{proof}

\begin{example} Consider $\ZZ \times \{ 0 \} \subseteq \RR^d$. Then, its counting diffraction is 
\[
\reallywidehat{\gamma_{\operatorname{count}}} = \delta_{\ZZ} \times \lambda \,.
\]
\end{example}

\begin{example} Let $\varLambda \subseteq \RR$ be the Silver mean model set (see \cite[Example~4.5]{TAO}) and let $\varGamma = \varLambda \times \{ 0 \} \subseteq \RR^{1+m}$. Then, the diffraction of $\varGamma$ is 
\[
\reallywidehat{\gamma_{\operatorname{count}}} = \sigma \times \lambda \,,
\]
where 
\[
\sigma=  \sum_{k \in \frac{1}{2}\ZZ+\frac{1}{2\sqrt{2}}\ZZ} A_k \delta_k
\]
is the pure point measure from \cite[Theorem~9.3]{TAO}.
\end{example}

\section*{Acknowledgements}
A.H. was partially supported by the NSERC Discovery grant 2024-03883, C.R. was supported by the NSERC Discovery grant 2019-05430, and N.S. was supported by the NSERC Discovery grants 2020-00038 and 2024-04853. { The authors would like to thank the anonymous reviewer for some suggestions which improved the quality of the manuscript.}

\end{document}